\newenvironment{keywords}{{\bf Key words. }}{\\}
\newenvironment{AMS}{{\bf Mathematics subject classification. }}{\\}
\newcommand{\at}[1]{}
\newcommand{\Gr}{{\rm gph\,}}
\newcommand{\epi}{{\rm epi\,}}
\newcommand{\co}{{\rm conv\,}}
\newcommand{\cl}{{\rm cl\,}}
\newcommand{\xb}{\bar x}
\newcommand{\ub}{\bar u}
\newcommand{\yb}{\bar y}
\newcommand{\zb}{\bar z}
\newcommand{\wb}{\bar w}
\newcommand{\yba}{\yb^\ast{}}
\newcommand{\vb}{\bar v}
\newcommand{\vba}{\vb^\ast{}}
\newcommand{\zba}{\zb^\ast{}}
\newcommand{\lb}{{\bar\lambda}}
\newcommand{\Q}{{\cal Q}}
\newcommand{\Lsp}{{\cal L}}
\newcommand{\KbG}{{\bar K_\Gamma}}
\newcommand{\Tlin}{T^{\rm lin}}
\newcommand{\TlinO}{\Tlin_{P,D}}
\newcommand{\TlinOk}[1]{T^{{\rm lin},#1}_{P,D}}
\newcommand{\Lb}{\bar\Lambda}
\newcommand{\Lbv}{\bar\Lambda(v)}
\newcommand{\R}{\mathbb{R}}
\newcommand{\norm}[1]{\|#1\|}
\newcommand{\dist}[1]{{\rm d}(#1)}
\newcommand{\B}{{\cal B}}
\newcommand{\K}{{\cal K}}
\newcommand{\mv}{\,\vert\, }
\newcommand{\longsetto}[1]{\mathop{\longrightarrow}\limits^#1}
\newcommand{\oo}{o}
\newcommand{\skalp}[1]{\langle #1\rangle}
\newcommand{\argmax}{\mathop{\rm arg\,max}\limits}
\newtheorem{definition}{Definition}
\newtheorem{theorem}{Theorem}
\newtheorem{lemma}{Lemma}
\newtheorem{example}{Example}
\newtheorem{corollary}{Corollary}
\newtheorem{proposition}{Proposition}
\newtheorem{remark}{Remark}
\newtheorem{assumption}{Assumption}
\begin{document}
\title{Linearized M-stationarity conditions for general optimization problems}
\author{Helmut Gfrerer\thanks{Institute of Computational Mathematics, Johannes Kepler University (JKU) Linz, A-4040 Linz, Austria, e-mail:
helmut.gfrerer@jku.at.}}
\date{}

\maketitle
\begin{abstract}
  This paper investigates new first-order optimality conditions for general optimization problems. These optimality conditions are stronger than the commonly used M-stationarity conditions and are in particular useful when the latter cannot be applied because the underlying limiting normal cone cannot be computed effectively. We apply our optimality conditions to a MPEC to demonstrate their practicability.
\end{abstract}

\begin{keywords}M-stationarity conditions; limiting normal cone; regular normal cone; mathematical programs with equilibrium constraints.
\end{keywords}

\begin{AMS}
  49J40, 49J52, 90C.
\end{AMS}

\section{Introduction}
This paper deals with first-order optimality conditions for general optimization problems of the form
\begin{equation}\label{EqGenOptProbl}
  \min_z f(z)\quad\mbox{subject to}\quad P(z)\in D
\end{equation}
where the mappings $f:\R^d\to \R$ and $P:\R^d\to\R^s$ are assumed to be continuously differentiable and $D$ is a closed subset of $\R^s$.  Note that formally more general problems of the form
\begin{eqnarray}\label{EqGenOptProbl1}\min_{z}&&f(z)\\
\nonumber\mbox{subject to}&&0\in P(z)+Q(z),
\end{eqnarray}
where $Q:\R^d\rightrightarrows\R^s$ is a set-valued mapping with closed graph, can be equivalently written in the form \eqref{EqGenOptProbl} as
\begin{equation}\label{EqGenOptProbl2}\min f(z)\quad\mbox{subject to}\quad (z,-P(z))\in\Gr Q.\end{equation}
If the objective function in \eqref{EqGenOptProbl} is not continuously differentiable, we can equivalently rewrite the program \eqref{EqGenOptProbl} as
\begin{equation}\label{EqGenOptProbl3}\min_{z,\alpha} \alpha\quad\mbox{subject to}\quad (z,\alpha,P(z))\in\epi f\times D.\end{equation}
Under some constraint qualification, necessary optimality conditions for the problem \eqref{EqGenOptProbl} at a local minimizer $\zb$ are usually of the form
\begin{equation}\label{EqKKT1}0\in \nabla f(\zb)+\nabla P(\zb)^\ast w^\ast,\end{equation}
where the multiplier $w^\ast$ belongs to a suitable normal cone  to the set $D$ at the point $P(\zb)$, which in turn is often related to the notion of a subdifferential. Among the big number of different normal cones/subdifferential constructions considered in the literature, two stand out by the comprehensive calculus available for them: One is given by the {\em generalized gradient} as introduced by Clarke \cite{Cla73} and the related normal cone, the other one is the {\em limiting  (Mordukhovich) normal cone/subdifferential}. Since the Clarke normal cone is the closure of the convex hull of the limiting normal cone, c.f. \cite{RoWe98}, the use of the limiting normal cone yields stronger first-order optimality conditions than an approach based on Clarke's normal cone and for this reason we focus in this paper on first-order optimality conditions related to the limiting normal cone, which are usually called M-stationarity conditions. However, despite the available calculus, it is sometimes very difficult or even impossible to compute the limiting normal cone effectively.

As an illustrating example let us consider the following subclass of so-called {\em mathematical programs with equilibrium constraints} (MPECs), where the equilibrium is  described by a generalized equation:
\begin{align}
\label{EqMPEC}\mbox{(MPEC)}\qquad \min_{x,y}\ & F(x,y)\\
\nonumber  \mbox{s.t. }&0\in\phi(x,y)+\widehat N_\Gamma(y),\\
\nonumber  &G(x,y)\leq 0
\end{align}
For this problem, the mappings $F:\R^n\times\R^m\to \R$, $\phi:\R^n\times\R^m\to \R^m$ and $G:\R^n\times\R^m\to\R^p$ are assumed to be continuously differentiable, $\Gamma:=\{y\mv g(y)\leq 0\}$ is given by a $C^2$-mapping $g:\R^m\to\R^q$ and $\widehat N_\Gamma(y)$ denotes the {\em regular (Fr\'echet) normal cone} to $\Gamma$ at $y$, cf. Definition \ref{DefCones} below. The program (MPEC) can be equivalently written in the format \eqref{EqGenOptProbl} as
\begin{align}
\label{EqMPEC'}\mbox{(MPEC')}\qquad \min_{x,y}\ & F(x,y)\\
\nonumber  \mbox{s.t. }&\hat P(x,y):=\left(\begin{array}{c}(y,-\phi(x,y))\\ G(x,y)\end{array}\right)\in\Gr\widehat N_\Gamma\times\R^p_-=:\hat D
\end{align}
The calculation of the limiting normal cone to $\hat D$ at $\hat P(\xb,\yb)$ involves the one of the limiting normal cone to $\Gr\widehat N_\Gamma$ at $(\yb,-\phi(\xb,\yb)$. The latter task is well-understood, if for the inequalities $g(y)\leq 0$ the {\em linear independence constraint qualification (LICQ)} is fulfilled at $\yb$, cf. \cite{MoOut01}. The situation, unfortunately, becomes substantially more difficult, provided LICQ is relaxed. Such a situation has been investigated under Mangasarian-Fromovitz constraint qualification (MFCQ) in \cite{HenOutSur09} and, under a certain constraint qualification less restrictive than MFCQ, in \cite{GfrOut16a}.  In both cases an additional condition is needed to obtain a point based representation of the limiting normal cone to $\Gr\widehat N_\Gamma$ in terms of first-order and second-order derivatives of $g$ at $\yb$ and in \cite{GfrOut16a} a simple example is given that without this additional condition the limited normal cone cannot be entirely expressed in terms of first-order and second-order derivatives of $g$.

On the other hand, very recently much progress has been achieved in computing the tangent cone to $\Gr\widehat N_\Gamma$ and to the tangent cone of the feasible region of \eqref{EqMPEC}, see \cite{GfrOut16b, ChiHi17, GfrYe17a}. Under very mild assumptions one obtains a full description of the tangent cone to the feasible region of \eqref{EqMPEC} involving only first-order derivatives of $\phi$, $G$ and derivatives of $g$ up to second-order at a point $(\xb,\yb)$. Thus there must exist also some dual optimality condition in terms of these derivatives showing that the part of the limiting normal cone which is difficult to compute does not play a role in the optimality conditions.

At this point let us mention that it might be not feasible to reformulate the MPEC \eqref{EqMPEC} as a {\em mathematical program with complementarity constraints (MPCC)},
\begin{align}
\label{EqMPCC}\qquad \min_{x,y,\lambda}\ & F(x,y)\\
\nonumber  \mbox{s.t. }&0\in\phi(x,y)+\nabla g(y)^\ast\lambda,\\
\nonumber &0\leq \lambda \perp g(y)\geq 0,\\
\nonumber  &G(x,y)\leq 0.
\end{align}
Of course, if $(\xb,\yb)$ is a local solution of \eqref{EqMPEC} and the system $g(y)\leq 0$ fulfills some constraint qualification at $\yb$ ensuring $\widehat N_\Gamma(\yb)=\{\nabla g(\yb)^\ast\lambda\mv 0\leq \lambda\perp g(\yb)\}$, then it is easy to show that for every multiplier $\lb\geq 0$ fulfilling $0\in \phi(\xb,\yb)+\nabla g(\yb)^\ast\lb$, $\lb^Tg(\yb)=0$ the triple
$(\xb,\yb,\lb)$ is a local solution of \eqref{EqMPCC}. However, if LICQ fails to hold for the system $g(y)\leq 0$ at $\yb$, then it can happen that some constraint qualification is fulfilled for the MPEC \eqref{EqMPEC}, but all of the  MPCC-tailored constraint qualifications known from the literature are violated for \eqref{EqMPCC}. Thus we cannot apply the known first-order optimality conditions for the program \eqref{EqMPCC} in order to obtain optimality conditions for the program \eqref{EqMPEC}. This was first observed in \cite{AdHenOut17} and further developed in \cite{GfrYe17a}. In the latter paper an example is given where this phenomena occurs for convex quadratic functions $g_i$, $i=1,\ldots, q$ and linear mappings $\phi$ and $G$.

To overcome the difficulties arising when computing the limiting normal cone, we remember that the basic task in formulating first-order optimality conditions is the computation of the regular normal cone to the feasible set of \eqref{EqGenOptProbl}. However, for the regular normal cone only very restricted calculus is available and this is the reason why the limiting normal cone is used instead of the regular one. Having in mind that the basic goal is the computation of the regular normal cone to the feasible set, it is not difficult to see that in order to obtain a more accurate approximation we can use  the limiting normal cone to the tangent cone of the feasible set. Performing a more accurate analysis we observe that this process can be repeated and we obtain as a final result that the multiplier $w^\ast$ in \eqref{EqKKT1}  is a regular normal to a series of tangent cones to tangent cones to the set $D$. Since the new optimality conditions are derived by a repeated linearization procedure, we call the resulting optimality conditions {\em linearized M-stationarity conditions}.

The organization of the paper is as follows. In Section \ref{SecVarAna} we recall some basics from variational analysis. The  stationarity concepts of
B-,S- and M-stationarity and its relations with necessary optimality conditions are considered in Section \ref{SecStat}.

Section \ref{SecLM_stat} contains the main results on linearized M-stationarity conditions for the problem \eqref{EqGenOptProbl}. The analysis is done under a very weak constraint qualification: We only require the {\em generalized Guignard constraint qualification (GGCQ)} and the {\em metric subregularity constraint qualification (MSCQ)} for the linearized problem. In particular, both conditions are fulfilled if MSCQ holds for the problem \eqref{EqGenOptProbl}.

We apply these results to the MPEC \eqref{EqMPEC} in Section \ref{SecMPEC} and derive the linearized M-stationarity conditions under a certain  condition on the lower level system $q_i(y)\leq 0$, $i=1,\ldots,p$, which is weaker than the {\em constant rank constraint qualification} (CRCQ). This also works when we are not able to compute the limiting normal cone to $\Gr \widehat N_\Gamma$ as in \cite{GfrOut16a}.

In the concluding Section \ref{SecConcl} we briefly summarize the obtained results  and outline some topics for our future research.

Throughout the paper we use standard notation of variational analysis and generalized
differentiation. For an element $z\in\R^d$ we denote by $[z]$ the subspace $\{\alpha z\mv\alpha\in\R\}$ generated by $z$. Some more special symbols are introduced when appearing first in the text.

\if{
\begin{subequations}
\label{SubEqMPEC}
\begin{align}\label{SubEqMPEC_obj}\min_z& f(z)\\
\label{SubEqMPEC_equil}\text{subject to }&(G(z),H(z))\in \Gr \widehat N_\Gamma,\\
\label{SubEqMPEC_inequ}&g(z)\in Q,
\end{align}
\end{subequations}
where $g:\R^d\to \R^{m_Q}$ and $G,H:\R^d\to \R^{m_\Gamma}$ are continuously differentiable, $Q\subseteq \R^{m_Q}$ is a closed convex set, $\Gamma\subseteq \R^{m_\Gamma}$ is a closed set and $\widehat N_\Gamma$ denotes the regular normal cone mapping to $\Gamma$, whose definition can be found in the next section.

The MPEC \eqref{SubEqMPEC} can be obviously written in the format \eqref{EqGenOptProbl} by putting
\[P(z):=(G(z),H(z),g(z)),\quad D:=\Gr \widehat N_\Gamma\times Q.\]
In case that $\Gamma$ is a closed convex cone $K$, the MPEC \eqref{SubEqMPEC} can be rewritten as a {\em mathematical program with complementarity constraints} (MPCC)
\begin{subequations}
\label{SubEqMPCC}
\begin{align}\label{SubEqMPCC_obj}\min_z\ & f(z)\\
\label{SubEqMPCC_equil}\text{subject to }&G(z)\in K,\ H(z)\in K^\circ,\ \skalp{G(z),H(z)}=0,\\
\label{SubEqMPCC_inequ}&g(z)\in Q,
\end{align}
\end{subequations}
which corresponds to the setting
\[P(z)=(G(z),H(z),\skalp{G(z),H(z)},g(z)),\ D:=K\times K^\circ\times \{0\}\times Q.\]
The MPCC \eqref{SubEqMPCC} also comprises  the standard MPCC with $K=\R^m_-$ and $Q=\R^{m_Q}_-$. In this special case the MPCC \eqref{SubEqMPCC} is a smooth nonlinear programs which, however, do not satisfy most of the standard constraint qualifications. This has led to several weakened stationarity notions that have
been introduced in connection with optimality conditions and numerical approaches.
Two of these stationary notions play an important role: One is called {\em strong stationarity} (S-stationarity), which guarantees that at the point under consideration no feasible descent direction exists. This is certainly a  first-order necessary optimality condition, however the constraint qualifications needed to ensure S-stationarity of a local minimizer  appear to be too restrictive for MPECs. Another, weaker stationarity notion is referred to by the identifier {\em M-stationarity}. M-stationarity has the advantage that it requires only very weak constraint qualifications, but it does not preclude the existence of feasible descent directions. S-stationarity was first considered in the  monograph by Luo, Pang and Ralph \cite{LuPaRa96} whereas M-stationarity conditions appeared first in the papers by Outrata \cite{Out99} and Ye \cite{Ye99}, respectively. The monikers M-stationarity and S-stationarity were coined in \cite{Sch00,SchSch00}.

In this paper we consider the generalization of the notions of S-stationarity and M-stationarity, which were introduced for standard MPCCs, to the more general problem \eqref{EqGenOptProbl} as presented  by Flegel, Kanzow and Outrata \cite{FleKanOut07}. This approach is based on modern concepts of variational analysis and generalized differentiation and gives deep insight into the relations between the required constraint qualifications and  stationarity conditions. Moreover, this approach gives us also some idea how to strengthen the M-stationarity conditions in order to exclude some spurious feasible descent directions. A first step in this direction has been done in the very recent papers \cite{BeGfr17a,BeGfr16d}, where a new stationarity concept named $\Q$-stationarity was introduced which could be combined with M-stationarity to obtain the stronger notion of $\Q_M$-stationarity. $\Q_M$-stationarity seems to be very useful in case when the set $D$ in \eqref{EqGenOptProbl} has some polyhedral structure. In particular, in \cite{BeGfr16d} also an algorithm is given how to resolve the inherent combinatorial structure of the M-stationarity conditions by means of $\Q$-stationarity.
In this paper we consider a different approach. We consider the case when $D$ is non-polyhedral and we qualify the multipliers which fulfill the M-stationarity conditions at a local minimizer. We will show that not every multiplier belonging to the limiting normal cone of $D$ is suitable, but merely regular normals to a series of tangent cones to  $D$ associated with some critical directions. Moreover, this has the advantage that we need not to calculate the full limiting normal cone but only a part of it which is easier manageable. As an example we consider the case of an MPEC \eqref{SubEqMPEC} where $\Gamma$ is given by $C^2$-inequalities $q_i(y)\leq 0$, $i=1,\ldots,p$ and $Q=\R^{m_Q}_-$ is the negative orthant. Under a certain  condition on the lower level system $q_i(y)\leq 0$, $i=1,\ldots,p$, which is weaker than the {\em constant rank constraint qualification} (CRCQ), we obtain new first-order optimality conditions. This also works when we are not able to compute the limiting normal cone to $\Gr \widehat N_\Gamma$ as in \cite{GfrOut16a}.

The organization of the paper is as follows.}\fi
%
\section{Preliminaries from variational analysis\label{SecVarAna}}
All the sets under consideration are supposed to be locally closed around the points in question without further mentioning. We recall first the standard constructions of variational analysis used in what follows.
\begin{definition}\label{DefCones}
Given a set
$\Omega\subseteq\mathbb R^d$ and a point $\bar z\in\Omega$,
the (Bouligand-Severi) {\em tangent/contingent cone} to $\Omega$
at $\bar z$ is a closed cone defined by
\begin{equation*}\label{normalcone}
T_\Omega(\bar z)
:=\Big\{w\in\mathbb R^d\Big|\;\exists t_k\downarrow
0,\;w_k\to w\;\mbox{ with }\;\bar z+t_k w_k\in\Omega ~\forall ~ k\}.
\end{equation*}
The (Fr\'{e}chet) {\em regular normal cone} and the (Mordukhovich) {\em limiting/basic normal cone} to $\Omega$ at $\bar
z$ are  defined by
\begin{eqnarray}
&& \widehat N_\Omega(\bar z):=(T_\Omega(\bar z))^\ast\nonumber\\
\mbox{and }  &&
N_\Omega(\bar z):=\left \{z^\ast \mv \exists z_{k}\stackrel{\Omega}{\to}\zb \mbox{ and } z^\ast_k\rightarrow z^\ast \mbox{ such that } z^\ast_{k}\in \widehat{N}_{\Omega}(z_k) \  \forall k \right \}
\nonumber
\end{eqnarray}
respectively.\\
Further, if $\zb\not\in\Omega$ we define
\[T_\Omega(\zb):=\widehat N_\Omega(\zb):=N_\Omega(\zb):=\emptyset.\]
\end{definition}
When  the set $\Omega$ is convex, the tangent/contingent cone and the regular/limiting normal cone reduce to
the classical tangent cone and normal cone of convex analysis respectively. The regular normal cone $\widehat N_\Omega(\zb)$ is always convex whereas the limiting normal cone can be non-convex  if $\Omega$ is not convex.

\begin{lemma}\label{LemInclTangCone} Let $\Omega\subseteq\R^d$  be closed and $\zb\in\Omega$. Then
\begin{equation}\label{EqLimNormalDir2}N_\Omega(\zb)\supseteq N_{T_\Omega(\zb)}(0)=\widehat N_\Omega(\zb)\cup\bigcup_{0\not=w\in\R^d}N_{T_\Omega(\zb)}(w).\end{equation}
\end{lemma}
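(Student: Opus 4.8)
\emph{Plan.} Write $T:=T_\Omega(\zb)$, a closed cone, and note the elementary scaling invariance $\widehat N_T(\lambda w)=\widehat N_T(w)$ for all $\lambda>0$ and $w\in T$, which follows from $T_T(\lambda w)=\lambda\,T_T(w)$ (because $\lambda T=T$) together with invariance of polar cones under positive rescaling. Since $T$ is a closed cone we have $T_T(0)=T$, so taking polars gives $\widehat N_T(0)=T^\ast=\widehat N_\Omega(\zb)$; this identifies the first set on the right-hand side, and it remains to prove the equality $N_T(0)=\widehat N_T(0)\cup\bigcup_{0\ne w}N_T(w)$ and the inclusion $N_\Omega(\zb)\supseteq N_T(0)$.

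For the equality I argue directly from the sequential definition of $N_T$. For ``$\subseteq$'', take $z^\ast\in N_T(0)$ with $z_k\to 0$ in $T$, $z_k^\ast\to z^\ast$, $z_k^\ast\in\widehat N_T(z_k)$; along the subsequence with $z_k=0$ one obtains $z^\ast\in\widehat N_T(0)$ (the regular normal cone being closed), and along the subsequence with $z_k\ne 0$ one passes to $w_k:=z_k/\norm{z_k}\to w$ with $\norm w=1$ and uses $z_k^\ast\in\widehat N_T(z_k)=\widehat N_T(w_k)$ to get $z^\ast\in N_T(w)$ with $w\ne 0$. For ``$\supseteq$'', $\widehat N_T(0)\subseteq N_T(0)$ is immediate, while for $z^\ast\in N_T(w)$ with $w\ne 0$, witnessed by $u_k\to w$ in $T\setminus\{0\}$, $z_k^\ast\to z^\ast$, $z_k^\ast\in\widehat N_T(u_k)$, the rescaled $z_k:=u_k/k\in T$ tends to $0$ and satisfies $z_k^\ast\in\widehat N_T(u_k)=\widehat N_T(z_k)$, so $z^\ast\in N_T(0)$.

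The inclusion $N_\Omega(\zb)\supseteq N_T(0)$ I would derive from the claim that $\widehat N_T(u)\subseteq N_\Omega(\zb)$ for every $u\in T$: since $N_\Omega(\zb)$ is closed and $N_T(0)$ is the outer limit of $\widehat N_T(u)$ as $u\to0$ in $T$, the claim gives $N_T(0)\subseteq N_\Omega(\zb)$. To prove the claim, fix $u\in T$ and $0\ne w^\ast\in\widehat N_T(u)$ (the case $w^\ast=0$ is trivial). By the smooth variational description of regular normals (see e.g.\ \cite{RoWe98}) there is a $C^1$ function $h$ with $\nabla h(u)=w^\ast$ and $h\le h(u)$ on $T$; after replacing $h$ by $h-\norm{\cdot-u}^2$ we may assume in addition that $u$ is the \emph{unique} maximizer of $h$ over $T$. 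Since $u\in T_\Omega(\zb)$, pick $t_k\downarrow 0$ and $v_k\to u$ with $\zb+t_kv_k\in\Omega$, and let $\hat v_k$ maximize $h$ over the compact set $\{v:\zb+t_kv\in\Omega,\ \norm{v-u}\le1\}$ (nonempty for large $k$). From $h(\hat v_k)\ge h(v_k)\to h(u)$ and the fact that every accumulation point of $(\hat v_k)$ lies in $\Limsup_k(\Omega-\zb)/t_k\subseteq T$, uniqueness of the maximizer forces $\hat v_k\to u$; hence $\hat v_k$ eventually lies in the interior of the localizing ball, so it is a local maximizer of $h$ over $(\Omega-\zb)/t_k$, giving $\nabla h(\hat v_k)\in\widehat N_{(\Omega-\zb)/t_k}(\hat v_k)=\widehat N_\Omega(\zb+t_k\hat v_k)$, the last equality being invariance of the regular normal cone under translation and positive rescaling. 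Letting $k\to\infty$, $\zb+t_k\hat v_k\to\zb$ in $\Omega$ and $\nabla h(\hat v_k)\to\nabla h(u)=w^\ast$, so $w^\ast\in N_\Omega(\zb)$.

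The main obstacle is this last claim $\widehat N_T(u)\subseteq N_\Omega(\zb)$. The two delicate points there are ensuring that $u$ is a \emph{well-posed} (unique) maximizer of the smooth variational function over $T$, so that the localized maximizers $\hat v_k$ are genuinely forced to converge to $u$, and the bookkeeping of the affine rescaling $x\mapsto(x-\zb)/t_k$, under which one checks that the contingent cone, the regular normal cone and the local-maximizer condition all transform correctly. The equality part, by contrast, reduces to the scaling invariance of $\widehat N_T$ together with a routine normalization argument.
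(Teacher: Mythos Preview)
Your proof is correct. For the equality $N_T(0)=\widehat N_T(0)\cup\bigcup_{w\ne0}N_T(w)$ you argue exactly as the paper does for the direction ``$\subseteq$'' (normalize $z_k/\norm{z_k}$ and use the scaling invariance $\widehat N_T(\lambda w)=\widehat N_T(w)$); for ``$\supseteq$'' you give the natural direct rescaling argument $z_k:=u_k/k$, whereas the paper simply invokes \cite[Proposition~6.27]{RoWe98}.

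The genuine difference is in the inclusion $N_\Omega(\zb)\supseteq N_T(0)$. The paper again just cites \cite[Proposition~6.27]{RoWe98} and moves on; you instead supply a self-contained proof of the stronger claim $\widehat N_T(u)\subseteq N_\Omega(\zb)$ for every $u\in T$, via the smooth variational description of regular normals (the paper itself uses this tool elsewhere, citing \cite[Theorem~6.11]{RoWe98}) together with a localized-maximizer construction along the difference quotients $(\Omega-\zb)/t_k$. This is effectively a reproof of the relevant part of the cited proposition. Your argument buys independence from the external reference and makes transparent \emph{why} regular normals to the tangent cone arise as limits of regular normals to $\Omega$; the paper's route buys brevity. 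One small point worth tightening in your write-up: the variational description already furnishes $h$ with a \emph{global} maximum over $T$ at $u$, so after adding $-\norm{\cdot-u}^2$ the maximizer is globally unique, which is what your accumulation-point argument actually needs (local uniqueness alone would not force $\hat v_k\to u$ with the fixed radius $1$ you chose).
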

\begin{proof}
   The inclusion $N_\Omega(\zb)\supseteq N_{T_\Omega(\zb)}(0)$ in \eqref{EqLimNormalDir2} was shown in \cite[Proposition 6.27]{RoWe98}. It also follows from
  \cite[Proposition 6.27]{RoWe98} together with $N_{T_\Omega(\zb)}(0)\supseteq \widehat N_{T_\Omega(\zb)}(0)=\widehat N_\Omega(\zb)$ that $N_{T_\Omega(\zb)}(0)\supseteq\widehat N_\Omega(\zb)\cup\bigcup_{0\not=w\in\R^d}N_{T_\Omega(\zb)}(w)$. In order to show the reverse inclusion, consider $w^\ast\in N_{T_\Omega(\zb)}(0)$ together with sequences $w_k\to 0$, ${w_k}^\ast\to w^\ast$ with $w_k^\ast\in \widehat N_{T_\Omega(\zb)}(w_k)$ $\forall k$. If $w_k=0$ holds for infinitely many $k$, then $w^\ast \in \widehat N_{T_\Omega(\zb)}(0)=\widehat N_\Omega(\zb)$ follows because $\widehat N_\Omega(\zb)$ is closed.
On the other hand, if $w_k\not=0$ holds for all but finitely many $k$ by passing to a subsequence we can assume that $w_k/\norm{w_k}$ converges to some $w$, and because of $\widehat N_{T_\Omega(\zb)}(w_k)=\widehat N_{T_\Omega(\zb)}(w_k/\norm{w_k})$ we conclude $w^\ast\in N_{T_\Omega(\zb)}(w)$. Hence \eqref{EqLimNormalDir2} is established and this finishes the proof.
\end{proof}

Usually, the computation of the  limiting normal cone to a nonconvex set $\Omega$ is a difficult task. A special case when the limiting normal cone has a comparatively simple description is given by polyhedral sets.

\begin{definition}Let $\Omega\subseteq\R^d$.
\begin{enumerate}
\item We say that $\Omega$ is {\em convex polyhedral}, if the set can be written as the intersection of finitely many halfspaces, i.e. there are elements $(a_i,\alpha_i)\in\R^d\times\R$, $i=1,\ldots,p$ such that $\Omega=\{z\mv \skalp{a_i,z}\leq\alpha_i,\ i=1,\ldots,p\}$.
\item We say that $\Omega$ is {\em polyhedral}, if it is the union of finitely many convex polyhedral sets.
\item Given a point $\zb\in\Omega$, we say that $\Omega$ is {\em locally polyhedral near $\zb$} if there is a neighborhood $W$ of $\zb$ and a polyhedral set $C$ such that $\Omega\cap W= C\cap W$.
\end{enumerate}
\end{definition}

\begin{lemma}
  Let $\Omega\subseteq\R^d$ be locally polyhedral near some point $\zb\in \Omega$. Then
  \begin{equation}
    \label{EqLimNormalDirPoly}N_\Omega(\zb)=\bigcup_{w\in T_\Omega(\zb)}\widehat N_{T_\Omega(\zb)}(w).
  \end{equation}
\end{lemma}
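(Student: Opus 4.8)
The nontrivial part is the inclusion ``$\subseteq$'' in \eqref{EqLimNormalDirPoly}. The opposite inclusion holds for \emph{every} closed set and is already essentially Lemma \ref{LemInclTangCone}: since the tangent cone to a closed cone at the origin is the cone itself, one has $T_{T_\Omega(\zb)}(0)=T_\Omega(\zb)$ and hence $\widehat N_\Omega(\zb)=\widehat N_{T_\Omega(\zb)}(0)$; combining this with $N_{T_\Omega(\zb)}(w)\supseteq\widehat N_{T_\Omega(\zb)}(w)$ shows that the right-hand side of \eqref{EqLimNormalDir2} dominates $\bigcup_{w\in T_\Omega(\zb)}\widehat N_{T_\Omega(\zb)}(w)$. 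So it only remains to prove $N_\Omega(\zb)\subseteq\bigcup_{w\in T_\Omega(\zb)}\widehat N_{T_\Omega(\zb)}(w)$, and this is the step that uses local polyhedrality.

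The plan for the reverse inclusion is first to reduce to the case where $\Omega$ is the polyhedral cone $K:=T_\Omega(\zb)$ sitting at the origin. Since $N_\Omega(\zb)$, $T_\Omega(\zb)$ and $\widehat N_\Omega(z)$ for $z$ close to $\zb$ depend on $\Omega$ only in a neighborhood of $\zb$, I would replace $\Omega$ by a polyhedral set $C=\bigcup_{j=1}^N C_j$ (a finite union of convex polyhedral sets) that coincides with $\Omega$ near $\zb$, and then verify that $C$ agrees with $\zb+K$ on some neighborhood $V$ of $\zb$. The ingredients are routine: for every $j$ with $\zb\in C_j$ the facet inequalities of $C_j$ that are inactive at $\zb$ stay slack near $\zb$, so $(C_j-\zb)\cap V=T_{C_j}(\zb)\cap V$; the $C_j$ with $\zb\notin C_j$ are bounded away from $\zb$; and the contingent cone of a finite union is the union of the contingent cones (with the convention $T_A(\zb)=\emptyset$ for $\zb\notin A$), so $T_C(\zb)=\bigcup_{j:\,\zb\in C_j}T_{C_j}(\zb)=K$. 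By translation invariance of the cones this yields $N_\Omega(\zb)=N_K(0)$ and $\widehat N_\Omega(z)=\widehat N_K(z-\zb)$ for $z$ sufficiently close to $\zb$.

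Now take $z^\ast\in N_\Omega(\zb)=N_K(0)$ and pick $z_k\to 0$ with $z_k\in K$, $z_k^\ast\to z^\ast$ and $z_k^\ast\in\widehat N_K(z_k)$. Each $z_k^\ast$ lies in $\bigcup_{w\in K}\widehat N_K(w)$, and the crucial observation is that this set is a \emph{finite} union of closed convex cones. Indeed, because $K$ is polyhedral, $T_K(w)=\bigcup_{j:\,w\in K_j}T_{K_j}(w)$ is determined entirely by the combinatorial data ``which $K_j$ contain $w$, and which facet inequalities of those $K_j$ are active at $w$'', which takes only finitely many values as $w$ ranges over $K$; hence $\{\widehat N_K(w)=(T_K(w))^\ast : w\in K\}$ is a finite family of closed sets. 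A finite union of closed sets is closed, so passing to the limit gives $z^\ast\in\bigcup_{w\in K}\widehat N_K(w)=\bigcup_{w\in T_\Omega(\zb)}\widehat N_{T_\Omega(\zb)}(w)$, which finishes the proof.

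The only genuinely load-bearing step is this last finiteness observation, which is precisely where polyhedrality is indispensable: it is what makes $\bigcup_{w\in K}\widehat N_K(w)$ closed and therefore stable under the limiting process defining $N_K(0)$. Everything else — the two locality reductions and the elementary fact that a convex polyhedral set is, near any of its points, a translate of its tangent cone — is bookkeeping, though the reduction must be arranged so that it copes simultaneously with the finite union and with the inequalities inactive at $\zb$.
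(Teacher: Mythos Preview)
Your argument is correct. The paper itself gives no proof at all beyond the citation ``Follows from \cite[Lemma 2.2]{Gfr14a}'', so there is no methodological comparison to make; what you have written is a self-contained version of the standard argument behind that cited lemma. The two essential ideas you use --- that a polyhedral set coincides near any of its points with a translate of its (polyhedral) tangent cone, and that for a polyhedral cone $K$ the family $\{\widehat N_K(w):w\in K\}$ is finite so $\bigcup_{w\in K}\widehat N_K(w)$ is closed --- are exactly the ingredients one needs, and your bookkeeping around the finite union (pigeonholing to get $T_{\cup_j C_j}(\zb)=\cup_j T_{C_j}(\zb)$, discarding pieces not containing $\zb$, and handling inactive facet inequalities) is carried out correctly. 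The only remark I would add is that it is worth stating explicitly that $K=\bigcup_{j:\,\zb\in C_j}T_{C_j}(\zb)$ is again polyhedral (as a finite union of convex polyhedral cones), since this is what justifies the finiteness step at the end.
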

\begin{proof}
  Follows from \cite[Lemma 2.2]{Gfr14a}.
\end{proof}

In this paper the notion of {\em metric subregularity} will play an important role.
\begin{definition}
  	 Let $M:\R^d\rightrightarrows\R^s$ be a set-valued mapping and let $(\zb,\wb)\in\Gr M$. We say that $M$ is {\em metrically subregular} at $(\zb,\wb)$ if there exist a neighborhood $W$ of $\zb$ and a positive number  $\kappa>0$ such that
  \begin{equation}\label{EqMetrSubReg}\dist{z,M^{-1}(\wb)}\leq\kappa\dist{\wb,M(z)}\ \; \forall z\in W.
  \end{equation}
\end{definition}
It is well-known that metric subregularity of $M$ at $(\zb,\wb)$ is equivalent with the property of {\em calmness} of the inverse mapping $M^{-1}$ at $(\wb,\zb)$, cf. \cite{DoRo04}. Further,
metric subregularity of $M$ at $(\zb,\wb)$ is equivalent with metric subregularity of the mapping $z\to (z,\bar w)-\Gr M$ at $(\zb,(0,0))$, cf. \cite[Proposition 3]{GfrYe17a}.
\begin{lemma}
  \label{LemConicalMult}Let $M:\R^d\rightrightarrows\R^s$ be a set-valued mapping, let $(\zb,\wb)\in\Gr M$ and assume that $\Gr M$ is a closed cone. If $M$ is metrically subregular at $(0,0)$ then there is some $\kappa>0$ such that
  \[\dist{z,M^{-1}(0)}\leq\kappa\dist{0,M(z)}\  \forall z\in\R^d.\]
  In particular, $M$ is metrically subregular at every point $(\zb,0)\in\Gr M$.
\end{lemma}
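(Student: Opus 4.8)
The plan is to exploit the positive homogeneity of both sides of the claimed inequality. First I would note that, since $\Gr M$ is a nonempty closed cone, passing to the limit $t\downarrow 0$ in $(t\zb,t\wb)\in\Gr M$ yields $(0,0)\in\Gr M$, so that $0\in M^{-1}(0)$ and $0\in M(0)$; in particular $M^{-1}(0)\neq\emptyset$ and $\dist{z,M^{-1}(0)}$ is a finite well-defined quantity for every $z$. Next I would record the key homogeneity: because $\Gr M$ is a cone, the set $M^{-1}(0)=\{z\mv(z,0)\in\Gr M\}$ is a cone and $M(tz)=tM(z)$ for every $t>0$ and every $z\in\R^d$. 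Consequently both maps $z\mapsto\dist{z,M^{-1}(0)}$ and $z\mapsto\dist{0,M(z)}$ are positively homogeneous of degree $1$, i.e.\ replacing $z$ by $tz$ with $t>0$ multiplies each value by $t$ (with the usual conventions $t\,\emptyset=\emptyset$ and $\dist{0,\emptyset}=+\infty$, so that the identities persist when an image is empty).

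Then I would invoke metric subregularity of $M$ at $(0,0)$: there are $\kappa>0$ and $\delta>0$ such that $\dist{z,M^{-1}(0)}\le\kappa\,\dist{0,M(z)}$ for all $z$ with $\norm{z}<\delta$. Given an arbitrary $z\neq 0$, the rescaled point $z':=\frac{\delta}{2\norm{z}}z$ satisfies $\norm{z'}<\delta$, hence $\dist{z',M^{-1}(0)}\le\kappa\,\dist{0,M(z')}$; multiplying this inequality by $\frac{2\norm{z}}{\delta}>0$ and using the degree-$1$ homogeneity of both sides gives $\dist{z,M^{-1}(0)}\le\kappa\,\dist{0,M(z)}$. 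For $z=0$ the inequality is trivial since $\dist{0,M^{-1}(0)}=0$ while the right-hand side is nonnegative. This establishes the global estimate with the same constant $\kappa$.

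Finally, the assertion that $M$ is metrically subregular at every point $(\zb,0)\in\Gr M$ is immediate: the global inequality just proved holds in particular for all $z$ in any neighborhood $W$ of $\zb$, which is precisely metric subregularity of $M$ at $(\zb,0)$ with constant $\kappa$.

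I do not anticipate a genuine obstacle here. The only points requiring care are the bookkeeping with possibly empty images (handled by the conventions $t\,\emptyset=\emptyset$ and $\dist{0,\emptyset}=+\infty$, which keep the homogeneity identities valid) and the remark that $(0,0)\in\Gr M$, which guarantees $M^{-1}(0)\neq\emptyset$ so that the left-hand distance is finite and the scaling argument is legitimate.
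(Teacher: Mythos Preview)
Your proof is correct and follows essentially the same route as the paper: use metric subregularity at $(0,0)$ to get the estimate on a neighborhood, then exploit that $\Gr M$ being a cone makes $M^{-1}(0)$ a cone and $M(tz)=tM(z)$, so both sides of the inequality are positively homogeneous and the local estimate scales to a global one. Your version is slightly more explicit about the edge cases ($z=0$, empty images, why $M^{-1}(0)\neq\emptyset$), but the argument is the same.
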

\begin{proof}According to the definition of metric subregularity, consider a neighborhood $W$ of $0$ and a real $\kappa>0$ such that $\dist{z,M^{-1}(0)}\leq \kappa \dist{0,M(z)}$ for all $z\in W$. Now consider $z\in\R^d$. Then we can find some $\lambda>0$ such that $\lambda z\in W$ and thus $\dist{\lambda z,M^{-1}(0)}\leq \kappa \dist{0,M(\lambda z)}$. Since $\Gr M$ is a cone it follows that $M^{-1}(0)$ is a cone and $M(\lambda z)=\lambda M(z)$. Hence $\lambda\dist{z,M^{-1}(0)}=\dist{\lambda z,M^{-1}(0)}\leq \kappa \dist{0,M(\lambda z)}=\lambda \kappa \dist{0,M(z)}$ and $\dist{z,M^{-1}(0)}\leq\kappa\dist{0,M(z)}$ follows.
\end{proof}

The following lemma is a special variant of \cite[Proposition 2.1]{Gfr11}.
\begin{lemma}\label{LemSubRegLinear}Let $P:\R^d\to\R^s$ be contiunously differentiable, let $D\subseteq\R^s$ be closed and assume that the mapping $z\rightrightarrows P(z)-D$ is metrically subregular at $(\zb,0)$. Then the mapping $u\rightrightarrows \nabla P(\zb)u-T_D(P(\zb))$ is metrically subregular at $(0,0)$.
\end{lemma}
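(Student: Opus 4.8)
The plan is to transfer the metric subregularity estimate from $P(z)-D$ at $(\zb,0)$ to its ``linearization'' $\nabla P(\zb)u-T_D(P(\zb))$ at $(0,0)$ by a direct $\varepsilon$--$\delta$ argument, exploiting the positive homogeneity of the linearized mapping together with Lemma \ref{LemConicalMult}. Write $\bar p:=P(\zb)\in D$ and let $\kappa>0$, $\delta>0$ be such that $\dist{z,(P(\cdot)-D)^{-1}(0)}=\dist{z,P^{-1}(D)}\le\kappa\,\dist{0,P(z)-D}=\kappa\,\dist{P(z),D}$ for all $z$ with $\norm{z-\zb}\le\delta$. Denote the linearized feasible set by $C:=\nabla P(\zb)^{-1}T_D(\bar p)=\{u\mv \nabla P(\zb)u\in T_D(\bar p)\}$, which is exactly $(\nabla P(\zb)(\cdot)-T_D(\bar p))^{-1}(0)$, and observe that since $\Gr(u\mapsto\nabla P(\zb)u-T_D(\bar p))$ is a closed cone, by Lemma \ref{LemConicalMult} it suffices to establish the subregularity estimate
\begin{equation*}
\dist{u,C}\le\kappa'\,\dist{0,\nabla P(\zb)u-T_D(\bar p)}=\kappa'\,\dist{\nabla P(\zb)u,T_D(\bar p)}
\end{equation*}
merely for all $u$ in some neighborhood of $0$.

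First I would fix a unit vector $u$ with $\norm u=1$ and analyze the scaled points $\zb+tu$ for small $t>0$. The key estimate to produce is a bound of the form $\dist{\bar p+t\nabla P(\zb)u,D}\le\dist{P(\zb+tu),D}+o(t)$, which follows from $P(\zb+tu)=\bar p+t\nabla P(\zb)u+o(t)$ and the triangle inequality for the distance function. Combining this with the subregularity estimate for $P(\cdot)-D$ applied at $\zb+tu$ (legitimate once $t\le\delta$), I get a point $z_t\in P^{-1}(D)$ with $\norm{(\zb+tu)-z_t}\le\kappa\,\dist{P(\zb+tu),D}$. Setting $u_t:=(z_t-\zb)/t$, one has $z_t=\zb+tu_t\in P^{-1}(D)$, so $P(\zb+tu_t)\in D$; passing to the tangent cone via $t\downarrow0$ (after extracting a convergent subsequence of the bounded sequence $u_t$, using that $\norm{u_t-u}=\norm{z_t-(\zb+tu)}/t\le\kappa\,\dist{P(\zb+tu),D}/t$ stays bounded), the limit $\hat u$ satisfies $\nabla P(\zb)\hat u\in T_D(\bar p)$, i.e.\ $\hat u\in C$, and $\norm{u-\hat u}=\lim\norm{u-u_t}\le\kappa\,\limsup_{t\downarrow0}\dist{P(\zb+tu),D}/t$.

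It remains to identify $\limsup_{t\downarrow0}\dist{P(\zb+tu),D}/t$ with $\dist{\nabla P(\zb)u,T_D(\bar p)}$, or at least to bound the former by a constant times the latter. The inequality $\limsup_{t\downarrow0}\dist{\bar p+t w,D}/t\le\dist{w,T_D(\bar p)}$ holds for every $w$: indeed if $\tilde w\in T_D(\bar p)$ realizes the distance, there are $t_k\downarrow0$, $w_k\to\tilde w$ with $\bar p+t_k w_k\in D$, whence $\dist{\bar p+t_kw,D}\le t_k\norm{w_k-w}\to t_k\norm{\tilde w-w}$; a short argument along a general sequence $t\downarrow0$ (comparing with the subsequence $t_k$, or simply using that $t\mapsto\dist{\bar p+tw,D}/t$ is handled by homogeneity of $T_D$ together with the fact that $D-\bar p$ ``looks like'' $T_D(\bar p)$ to first order) upgrades this to the full $\limsup$. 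Applying this with $w=\nabla P(\zb)u$ and using the estimate from the previous paragraph yields $\dist{u,C}\le\norm{u-\hat u}\le\kappa\,\dist{\nabla P(\zb)u,T_D(\bar p)}$ for all unit $u$, hence by homogeneity for all $u$ near $0$, which is what we needed.

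The main obstacle I anticipate is the careful handling of the $o(t)$ and $\limsup$ terms: one must ensure that the error in $P(\zb+tu)=\bar p+t\nabla P(\zb)u+o(t)$ is uniform enough in $u$ over the unit sphere (it is, by $C^1$ smoothness and compactness), and that extracting subsequences $u_t\to\hat u$ is compatible with taking the $\limsup$ of $\dist{P(\zb+tu),D}/t$ — strictly speaking one should pick a sequence $t_j\downarrow0$ along which $\dist{P(\zb+t_ju),D}/t_j$ converges to its $\limsup$, then refine it so that $u_{t_j}$ converges, which is the standard diagonal manoeuvre. Once these technicalities are organized, the proof is essentially the three estimates above chained together, and the conical structure via Lemma \ref{LemConicalMult} removes any worry about the size of the neighborhood.
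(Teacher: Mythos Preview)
The paper does not actually prove this lemma; it merely cites \cite[Proposition~2.1]{Gfr11}. Your overall strategy---rescaling, using the subregularity of $P(\cdot)-D$ to produce nearby feasible points, and passing to a tangential limit---is sound and does yield a self-contained proof once one error is repaired.

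The error is in the step you yourself flag as ``the main obstacle''. You assert that $\limsup_{t\downarrow0}\dist{\bar p+tw,D}/t\le\dist{w,T_D(\bar p)}$ and that the bound along the particular subsequence $t_k$ can be ``upgraded to the full $\limsup$''. This is false: take $D=\{0\}\cup\{2^{-n}:n\in\N\}\subset\R$, $\bar p=0$, $w=1$. Then $T_D(0)=[0,\infty)$, so $\dist{w,T_D(0)}=0$, yet at the midpoints $t=3\cdot 2^{-(n+2)}$ one has $\dist{t,D}/t=1/3$, whence the $\limsup$ is at least $1/3$. The tangent cone encodes only the $\liminf$ of $\dist{\bar p+tw,D}/t$; no homogeneity trick upgrades this.

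The fix is simply to use the $\liminf$, for which the identity $\liminf_{t\downarrow0}\dist{\bar p+tw,D}/t=\dist{w,T_D(\bar p)}$ does hold (this is standard, see e.g.\ \cite[Example~4.8]{RoWe98}). Choose the sequence $t_j\downarrow0$ so that $\dist{P(\zb+t_ju),D}/t_j$ converges to its $\liminf$ (not its $\limsup$); along it the points $u_{t_j}$ are bounded by your estimate $\norm{u-u_{t_j}}\le\kappa\,\dist{P(\zb+t_ju),D}/t_j$, a further subsequence converges to some $\hat u\in C$ exactly as you argue, and you obtain $\dist{u,C}\le\norm{u-\hat u}\le\kappa\,\dist{\nabla P(\zb)u,T_D(\bar p)}$. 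Since both sides are positively homogeneous in $u$ (the sets $C$ and $T_D(\bar p)$ being cones), this extends from unit $u$ to all $u\in\R^d$, so the appeal to Lemma~\ref{LemConicalMult} is not even needed.
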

Given a cone $C\subseteq \R^d$, we denote by $\Lsp(C)$ the largest subspace $L\subseteq \R^d$ such that
\[C+L\subseteq C.\]
Note that $\Lsp(C)$ is well defined because for two subspaces $L_1,L_2$ fulfilling $C+L_i\subseteq C$, $i=1,2$ we have
\begin{equation}\label{EqLineality1}C+L_1+L_2=(C+L_1)+L_2\subseteq C+L_2\subseteq C\end{equation}
and we are working in finite dimensional spaces. Note that for every subspace $L$ we have $C+L\supseteq C$ and thus $C+\Lsp(C)=C$.
If $C$ is a convex cone, then $\Lsp(C)=C\cap (-C)$ is the so-called {\em lineality space} of $C$, the largest subspace contained in $C$.
\begin{lemma}\label{LemLineality}
  Let $C\subseteq \R^d$ be a closed cone and let $\zb\in C$. Then
  \[\Lsp(C)+[\zb]\subseteq \Lsp(T_C(\zb)).\]
\end{lemma}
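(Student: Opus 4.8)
The plan is to prove the two inclusions $\Lsp(C)\subseteq\Lsp(T_C(\zb))$ and $[\zb]\subseteq\Lsp(T_C(\zb))$ separately, and then to combine them using the fact that $\Lsp(T_C(\zb))$, being a subspace with the property $T_C(\zb)+\Lsp(T_C(\zb))\subseteq T_C(\zb)$, is closed under sums of such subspaces — this is exactly the content of the computation \eqref{EqLineality1} applied to the cone $T_C(\zb)$. So it suffices to exhibit two subspaces $L$ with $T_C(\zb)+L\subseteq T_C(\zb)$, namely $L=\Lsp(C)$ and $L=[\zb]$; their sum then also has this property and is therefore contained in the \emph{largest} such subspace, which is $\Lsp(T_C(\zb))$.

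For the inclusion $[\zb]\subseteq\Lsp(T_C(\zb))$, I would argue as follows. First, $\zb\in T_C(\zb)$: since $C$ is a cone containing $\zb$, we have $\zb+t\,\zb=(1+t)\zb\in C$ for all $t>0$, so taking $t_k\downarrow 0$ and $w_k\equiv\zb$ in the definition of the tangent cone gives $\zb\in T_C(\zb)$; by the same token $-\zb$: we need $\zb+t_k w_k\in C$ for suitable $w_k\to-\zb$, and indeed for $t_k\in(0,1)$ we have $\zb+t_k(-\zb)=(1-t_k)\zb\in C$ since $C$ is a cone, so $-\zb\in T_C(\zb)$ as well; hence $[\zb]\subseteq T_C(\zb)$. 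To upgrade this to $[\zb]\subseteq\Lsp(T_C(\zb))$ I must show $T_C(\zb)+[\zb]\subseteq T_C(\zb)$, i.e. that $w+\alpha\zb\in T_C(\zb)$ whenever $w\in T_C(\zb)$ and $\alpha\in\R$. Given $w\in T_C(\zb)$, pick $t_k\downarrow 0$, $w_k\to w$ with $\zb+t_k w_k\in C$. Because $C$ is a cone, $(1+\alpha t_k)(\zb+t_k w_k)\in C$ for $k$ large (so that $1+\alpha t_k>0$), and this equals $\zb + t_k\big(w_k+\alpha\zb+\alpha t_k w_k\big)$; since $w_k+\alpha\zb+\alpha t_k w_k\to w+\alpha\zb$, we conclude $w+\alpha\zb\in T_C(\zb)$, as desired.

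For the inclusion $\Lsp(C)\subseteq\Lsp(T_C(\zb))$, set $L:=\Lsp(C)$, so $C+L\subseteq C$. I need $T_C(\zb)+L\subseteq T_C(\zb)$, i.e. for $w\in T_C(\zb)$ and $\ell\in L$ that $w+\ell\in T_C(\zb)$. Take $t_k\downarrow 0$, $w_k\to w$ with $\zb+t_k w_k\in C$. Then $(\zb+t_k w_k)+t_k\ell\in C+L\subseteq C$ because $t_k\ell\in L$ ($L$ is a subspace), and this equals $\zb+t_k(w_k+\ell)$ with $w_k+\ell\to w+\ell$, so $w+\ell\in T_C(\zb)$.

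Finally, combining the two inclusions: since both $\Lsp(C)$ and $[\zb]$ are subspaces whose Minkowski sum with $T_C(\zb)$ is contained in $T_C(\zb)$, the computation \eqref{EqLineality1} (with $C$ there replaced by $T_C(\zb)$, $L_1:=\Lsp(C)$, $L_2:=[\zb]$) shows $T_C(\zb)+\big(\Lsp(C)+[\zb]\big)\subseteq T_C(\zb)$, and hence $\Lsp(C)+[\zb]\subseteq\Lsp(T_C(\zb))$ by maximality. I do not anticipate a genuine obstacle here; the only point requiring a little care is the handling of the factor $1+\alpha t_k$ (it must stay positive, which holds for large $k$, and it tends to $1$, so it does not disturb the limit) — everything else is a direct manipulation of the definition of the contingent cone using that $C$ is a cone.
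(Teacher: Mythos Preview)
Your proof is correct and follows essentially the same route as the paper: establish separately that $T_C(\zb)+\Lsp(C)\subseteq T_C(\zb)$ and $T_C(\zb)+[\zb]\subseteq T_C(\zb)$ by direct manipulation of tangent sequences (using $C+\Lsp(C)\subseteq C$ for the first and the scaling $(1+\alpha t_k)(\zb+t_kw_k)\in C$ for the second), and then invoke \eqref{EqLineality1}. The only cosmetic differences are that you keep the step size $t_k$ and absorb the factor into the direction $w_k+\alpha\zb+\alpha t_k w_k$, whereas the paper rescales the step to $t_k(1+t_k\gamma)$, and your preliminary verification that $\pm\zb\in T_C(\zb)$ is harmless but not needed.
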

\begin{proof}
  We show that both $T_C(\zb)+\Lsp(C)\subseteq T_C(\zb)$ and $T_C(\zb)+[\zb]\subseteq T_C(\zb)$. Then the statement follows from \eqref{EqLineality1}. Consider a tangent $w\in T_C(\zb)$ together with sequences $t_k\downarrow 0$ and $w_k\to w$ with $\zb+t_kw_k\in C$ for all $k$. For fixed $l\in \Lsp(C)$ and  for every $k$ we have $t_kl\in \Lsp(C)$ and thus $\zb+t_kw_k+t_kl=\zb+t_k(w_k+l)\in C$. Hence $w+l\in T_C(\zb)$ and $T_C(\zb)+\Lsp(C)\subseteq T_C(\zb)$ follows. Next, let $\gamma\in\R$. By passing to a subsequence we can assume  $1+t_k\gamma>0$ and thus
  \[(1+t_k\gamma)(\zb+t_kw_k)=\zb +t_k(1+t_k\gamma)\left(w_k+\frac\gamma{1+t_k\gamma}\zb\right)\in C\ \forall k.\]
  Since $t_k(1+t_k\gamma)\downarrow 0$ and $w_k+\frac\gamma{1+t_k\gamma}\zb\to w+\gamma\zb$, we conclude $w+\gamma\zb\in T_C(\zb)$ and the second claimed inclusion $T_C(\zb)+[\zb]\subseteq T_C(\zb)$ follows. This finishes the proof.
\end{proof}
At the end of this section we recall the definition of the critical cone to a set.
\begin{definition}\label{DefCritCone}
Given a  set $\Omega$ and an element $\zb\in\Omega$ together with a regular normal $\zba\in \widehat N_\Omega(\zb)$ we define the {\em critical cone} to $\Omega$ at $(\zb,\zba)$ as
\[\K_{\Omega}(\zb,\zba):=T_\Omega(\zb)\cap [\zba]^\perp.\]
\end{definition}

\section{Stationarity concepts\label{SecStat}}
In this section we recall some basic fact about  stationarity concepts for the general problem \eqref{EqGenOptProbl}.

We denote by $\Omega$ the feasible region of the problems \eqref{EqGenOptProbl}, i.e.
\begin{eqnarray}
\label{EqOmega}  \Omega&:=&\{z\in\R^d\mv P(z)\in D\}
\end{eqnarray}
Further, given $\zb\in\Omega$ we denote by
\[\TlinO(\zb):=\{u\in\R^d\mv \nabla P(\zb)u\in T_D(P(\zb))\}\]
the {\em linearized tangent cone to $\Omega$ at $\zb$}. Recall that there always holds
\begin{equation}\label{EqInclGACQ}
  T_\Omega(\zb)\subseteq \TlinO(\zb).
\end{equation}
We use the notation $\TlinO(\zb)$ to indicate that the linearized tangent cone depends on $P$ and $D$, i.e., if we have two equivalent representations
\begin{equation}\label{EqEquivRepres}\Omega=\{z\mv P_1(z)\in D_1\}=\{z\mv P_2(z)\in D_2\}\end{equation}
with continuously differentiable mappings $P_i:\R^d\to\R^{s_i}$ and closed sets $D_i\subseteq R^{s_i}$, $i=1,2$,
then we can have $\Tlin_{P_1,D_1}(\zb)\not=\Tlin_{P_2,D_2}(\zb)$.
\begin{definition}
  Let $\zb\in\Omega$. We say that $\zb$ is
  \begin{enumerate}
    \item {\em B-stationary (Bouligand stationary)} for the problem \eqref{EqGenOptProbl}, if
    \[0\in \nabla f(\zb)+\widehat N_\Omega(\zb),\]
    \item {\em S-stationary (strong stationary)} for the problem \eqref{EqGenOptProbl}, if
    \begin{eqnarray*}0\in \nabla f(\zb)+\nabla P(\zb)^\ast\widehat N_D(P(\zb)),\end{eqnarray*}
    \item {\em M-stationary} for the problem \eqref{EqGenOptProbl}, if
    \begin{eqnarray*}0\in \nabla f(\zb)+\nabla P(\zb)^\ast N_D(P(\zb)).\end{eqnarray*}

  \end{enumerate}
\end{definition}
Note that S- and M-stationarity depend on $P$ and $D$ used for describing of $\Omega$ whereas B-stationarity  is independent of the representation of $\Omega$.

B-stationarity can be equivalently expressed as
\[\skalp{\nabla f(\zb),w}\geq 0\ \forall w\in T_\Omega(\zb).\]
By saying that a {\em feasible descent direction} for the program \eqref{EqGenOptProbl} at $\zb$ is a direction $w\in T_\Omega(\zb)$ with $\skalp{\nabla f(\zb),w}<0$, we see that B-stationarity conveys the fact that no feasible descent direction exists.
It is well known that every local minimizer is also B-stationary, cf. \cite[Theorem 6.12]{RoWe98}. Conversely, if $\zb$ is B-stationary for the program \eqref{EqGenOptProbl}, then by \cite[Theorem 6.11]{RoWe98} there exists a smooth mapping $\tilde f:\R^d\to\R$ such that $\tilde f(\zb)=f(\zb)$, $\nabla \tilde f(\zb)=\nabla f(\zb)$ and $\zb$ is a global minimizer of the program
\[\min_z\tilde f(z)\quad\mbox{subject to}\quad P(z)\in D.\]
Thus, if the available first-order information at the point $\zb$ is provided solely by $T_\Omega(\zb)$ and $\nabla f(\zb)$, then B-stationarity constitutes the best possible first-order optimality condition and thus characterizing B-stationarity is the primary goal.

However, the computation of the regular normal cone $\widehat N_\Omega(\zb)$ appearing in the definition of B-stationarity can be a very difficult task for general sets $D$ and therefore, besides other stationary concepts, the notions of S- and M-stationarity have been introduced. S-stationarity was first considered in the  monograph by Luo, Pang and Ralph \cite{LuPaRa96} whereas M-stationarity conditions appeared first in the papers by Outrata \cite{Out99} and Ye \cite{Ye99}, respectively. The monikers M-stationarity and S-stationarity were coined in \cite{Sch00,SchSch00} for MPCC and then carried over in \cite{FleKanOut07} to the general problem \eqref{EqGenOptProbl}.

By applying \cite[Theorem 6.14]{RoWe98} we readily obtain the inclusion
  \begin{equation}\label{EqInclRegNormal}\widehat N_\Omega(\zb) \supseteq \nabla P(\zb)^\ast\widehat N_D(P(\zb)).\end{equation}
Hence we deduce from the definition that S-stationarity of $\zb$ implies B-stationarity. However, the reverse implication is only valid under comparatively strong assumptions. We state here the following result due to Gfrerer and Outrata \cite[Theorem 4]{GfrOut16b}.
\begin{theorem}\label{ThSuffS_Stat}Assume that $\zb$ is feasible for the problem \eqref{EqGenOptProbl},  assume that the mapping $z\rightrightarrows P(z)-D$ is metrically subregular at $(\zb,0)$ and assume that
  \[ \nabla P(\zb)\R^d +\Lsp(T_D(P(\zb)))=\R^s.\]
  Then \eqref{EqInclRegNormal} holds with equality. In particular, if $\zb$ is B-stationary then it is  S-stationary as well.
\end{theorem}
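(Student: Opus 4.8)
The goal is to prove that, under metric subregularity of $z\rightrightarrows P(z)-D$ at $(\zb,0)$ together with the transversality-type condition $\nabla P(\zb)\R^d+\Lsp(T_D(P(\zb)))=\R^s$, the inclusion \eqref{EqInclRegNormal} is in fact an equality; the ``in particular'' part is then immediate since B-stationarity means $-\nabla f(\zb)\in\widehat N_\Omega(\zb)$ and equality would place $-\nabla f(\zb)$ into $\nabla P(\zb)^\ast\widehat N_D(P(\zb))$, which is exactly S-stationarity. So the whole task is the reverse inclusion $\widehat N_\Omega(\zb)\subseteq \nabla P(\zb)^\ast\widehat N_D(P(\zb))$.

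The strategy I would follow is to pass to the level of tangent cones and dualize. First I would reduce to a linearized, conic situation: by Lemma \ref{LemSubRegLinear}, metric subregularity of $z\rightrightarrows P(z)-D$ at $(\zb,0)$ yields metric subregularity of the linearized map $u\rightrightarrows \nabla P(\zb)u-T_D(P(\zb))$ at $(0,0)$. A standard consequence of metric subregularity of the linearized map (this is the content of results like \cite[Proposition 2.1]{Gfr11} / \cite{GfrOut16b}) is the tangent-cone formula
\begin{equation}\label{EqTangentEquality}
T_\Omega(\zb)=\TlinO(\zb)=\{u\mv \nabla P(\zb)u\in T_D(P(\zb))\},
\end{equation}
i.e. the inclusion \eqref{EqInclGACQ} holds with equality. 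Granting \eqref{EqTangentEquality}, we have $\widehat N_\Omega(\zb)=(T_\Omega(\zb))^\ast=\big(\TlinO(\zb)\big)^\ast$, so it suffices to show
\[
\big(\{u\mv \nabla P(\zb)u\in T_D(P(\zb))\}\big)^\ast=\nabla P(\zb)^\ast\big(T_D(P(\zb))\big)^\ast=\nabla P(\zb)^\ast\widehat N_D(P(\zb)).
\]
This is a purely finite-dimensional convex-cone identity: writing $A:=\nabla P(\zb)$ and $C:=T_D(P(\zb))$ (a closed cone, not necessarily convex), one wants $(A^{-1}C)^\ast = A^\ast C^\ast$. The inclusion $\supseteq$ is elementary. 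For $\subseteq$, the obstacle is that $C$ need not be convex; but the polar $C^\ast=\widehat N_D(P(\zb))$ only sees the closed convex hull $\cl\co C$, and $A^{-1}C\subseteq A^{-1}(\cl\co C)$, so it is enough to prove $(A^{-1}K)^\ast=A^\ast K^\ast$ for the \emph{closed convex} cone $K:=\cl\co C$. That is the classical chain rule for polar cones, valid precisely when a constraint-qualification condition such as $A\R^d+(K-K)=\R^s$ (or $A\R^d\cap \ri K\ne\emptyset$, or $A\R^d+\Lsp(K)=\R^s$) holds — and our hypothesis $A\R^d+\Lsp(T_D(P(\zb)))=\R^s$ is designed to supply exactly this, because $\Lsp(C)\subseteq \Lsp(\cl\co C)$. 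So the final step is: invoke (or quickly reprove via a separation / closedness-of-image argument) that under $A\R^d+\Lsp(K)=\R^s$ the cone $A^\ast K^\ast=\{A^\ast k^\ast\mv k^\ast\in K^\ast\}$ is closed and equals $(A^{-1}K)^\ast$.

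**Where the difficulty lies.** There are really two nontrivial points. The first is establishing the tangent-cone equality \eqref{EqTangentEquality} from metric subregularity of the linearization; this is where the metric-subregularity hypothesis is actually consumed, and it is a standard but not trivial estimate (one constructs, for $u\in\TlinO(\zb)$, feasible curves $z(t)=\zb+tu+o(t)$ using the subregularity bound $\dist{z,\Omega}\le\kappa\,\dist{P(z),D}$ and a Lyusternik–Graves-type correction). The second is the polar chain rule $(A^{-1}K)^\ast=A^\ast K^\ast$ with closedness of $A^\ast K^\ast$ under the lineality-space transversality condition; the only subtle part there is checking the constraint qualification is exactly what makes the set $\{(A^\ast k^\ast,\langle k^\ast,\cdot\rangle):\ k^\ast\in K^\ast\}$ closed, which one does by a routine recession-cone argument exploiting that $A^\ast$ annihilates nothing in $K^\ast$ that could escape to infinity once we quotient out $\Lsp(K)$. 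Neither step is deep, but the first is the one I would expect to cite rather than redo, and the non-convexity of $T_D(P(\zb))$ is the bookkeeping trap to watch — handled cleanly by replacing $C$ with $\cl\co C$ at the level of polars before applying the convex chain rule.
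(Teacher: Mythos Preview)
The paper does not prove this theorem; it is quoted from \cite[Theorem~4]{GfrOut16b}. So there is no paper-side argument to compare against, and your proposal has to stand on its own.

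Your overall plan is right: MSCQ gives GACQ, hence $\widehat N_\Omega(\zb)=\big(A^{-1}C\big)^\ast$ with $A:=\nabla P(\zb)$, $C:=T_D(P(\zb))$, and everything reduces to the polar identity $\big(A^{-1}C\big)^\ast=A^\ast C^\ast$. The gap is in your reduction to the convex hull $K:=\cl\co C$. From $A^{-1}C\subseteq A^{-1}K$ you obtain only $\big(A^{-1}K\big)^\ast\subseteq\big(A^{-1}C\big)^\ast$, so even granting the convex chain rule $\big(A^{-1}K\big)^\ast=A^\ast K^\ast=A^\ast C^\ast$ you recover merely the trivial inclusion $A^\ast C^\ast\subseteq\big(A^{-1}C\big)^\ast$. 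The hard inclusion $\big(A^{-1}C\big)^\ast\subseteq A^\ast C^\ast$ is precisely what is not delivered by this detour; in general $\big(A^{-1}C\big)^\ast$ can be strictly larger than $\big(A^{-1}K\big)^\ast$, and equality is exactly what the transversality hypothesis must buy.

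The repair does not pass through convex hulls. Set $L:=\Lsp(C)$ and extend $A$ to the surjective map $\tilde A:\R^d\times L\to\R^s$, $\tilde A(u,l)=Au+l$. Because $C+L=C$ one has $\tilde A^{-1}C=(A^{-1}C)\times L$, and for a \emph{surjective} linear map the identity $\big(\tilde A^{-1}C\big)^\ast=\tilde A^\ast C^\ast$ holds for an arbitrary closed cone $C$: any $z^\ast\in\big(\tilde A^{-1}C\big)^\ast$ annihilates $\ker\tilde A$ (since $0\in C$ forces $\pm\ker\tilde A\subseteq\tilde A^{-1}C$), so $z^\ast=\tilde A^\ast w^\ast$ for a unique $w^\ast$, and surjectivity then yields $w^\ast\in C^\ast$. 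Reading off the two components and using $C^\ast\subseteq L^\perp$ (which follows from $L\subseteq C$) gives $\big(A^{-1}C\big)^\ast=A^\ast C^\ast$ directly. This is where the space $\Lsp(C)$ actually enters, and it explains why no convexity of $T_D(P(\zb))$ is needed.
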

It is well known that B-stationarity implies M-stationarity under mild constraint qualification conditions.
\begin{definition}
  Let  $P(\zb)\in D$.
  \begin{enumerate}
  \item (cf. \cite{FleKanOut07}) We say that the {\em generalized Abadie constraint qualification} (GACQ) holds at $\zb$ if
    \begin{equation}
      \label{EqGACQ}T_\Omega(\zb)=\TlinO(\zb).
    \end{equation}
  \item (cf. \cite{FleKanOut07}) We say that the {\em generalized Guignard constraint qualification} (GGCQ) holds at $\zb$ if
    \begin{equation}
      \label{EqGGCQ}\widehat N_\Omega(\zb)=\big(\TlinO(\zb)\big)^\ast.
    \end{equation}
    \item (cf. \cite{GfrMo15a}) We say that the {\em metric subregularity constraint qualification (MSCQ)} holds at $\zb$  if the set-valued map $M(z):=P(z)-D$ is metrically subregular at $(\zb,0)$.
    \end{enumerate}
\end{definition}

We always have
\[\mbox{MSCQ}\ \Longrightarrow\ \mbox{GACQ}\ \Longrightarrow\ \mbox{GGCQ}.\]
Indeed, the first implication follows from \cite[Proposition 1]{HenOut05} whereas the second implication obviously holds true. Note that all these constraint qualifications depend on the representation of $\Omega$ by $P$ and $D$. GGCQ seems to be indispensable for verifying B-stationarity solely with first-order derivatives of the problem functions.

We state here the following result from the recent paper by Benko and Gfrerer \cite[Proposition 3]{BeGfr17a}.
\begin{theorem}\label{ThLinM_Stat}Assume that $\zb$ is feasible for the problem \eqref{EqGenOptProbl} and assume that GGCQ is fulfilled, while the mapping $u\rightrightarrows \nabla P(\zb)u- T_D(P(\zb))$ is metrically subregular at $(0,0)$.
Then
\begin{equation}
  \label{EqUpperInclRegNormal}\widehat N_\Omega(\zb)\subseteq \nabla P(\zb)^\ast N_{T_D(P(\zb))}(0) \subseteq \nabla P(\zb)^\ast N_D(P(\zb)).
\end{equation}
\end{theorem}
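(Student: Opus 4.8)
The plan is to establish the two inclusions in \eqref{EqUpperInclRegNormal} in turn. The second inclusion $\nabla P(\zb)^\ast N_{T_D(P(\zb))}(0)\subseteq \nabla P(\zb)^\ast N_D(P(\zb))$ is immediate: applying the linear map $\nabla P(\zb)^\ast$ to both sides of $N_{T_D(P(\zb))}(0)\subseteq N_D(P(\zb))$, which is part of Lemma \ref{LemInclTangCone} used for the set $D$ at the point $P(\zb)$, gives the claim.

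For the first, and essential, inclusion I would argue by exact penalization. Abbreviate $A:=\nabla P(\zb)$, $K:=T_D(P(\zb))$, $M(u):=Au-K$ and $C:=\TlinO(\zb)=\{u\mv Au\in K\}=M^{-1}(0)$. By GGCQ, $\widehat N_\Omega(\zb)=C^\ast$, so it suffices to show $C^\ast\subseteq A^\ast N_K(0)$. Since $\Gr M$ is a closed cone and $M$ is metrically subregular at $(0,0)$ by hypothesis, Lemma \ref{LemConicalMult} provides $\kappa>0$ with $\dist{u,C}\leq\kappa\,\dist{Au,K}$ for all $u\in\R^d$. Fix $u^\ast\in C^\ast$; the case $u^\ast=0$ being trivial, assume $u^\ast\neq0$ and set $g(u):=-\skalp{u^\ast,u}+\kappa\norm{u^\ast}\,\dist{Au,K}$. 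I claim $0$ is a global minimizer of $g$: for arbitrary $u\in\R^d$, choosing a projection $\tilde u$ of $u$ onto the closed set $C$ we have $\norm{u-\tilde u}=\dist{u,C}\leq\kappa\,\dist{Au,K}$, hence, using $\skalp{u^\ast,\tilde u}\leq0$ (as $\tilde u\in C$, $u^\ast\in C^\ast$) and Cauchy--Schwarz,
\[g(u)\geq-\skalp{u^\ast,u}+\norm{u^\ast}\,\norm{u-\tilde u}=-\skalp{u^\ast,\tilde u}+\skalp{u^\ast,\tilde u-u}+\norm{u^\ast}\,\norm{u-\tilde u}\geq 0=g(0).\]

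Because $0$ minimizes the finite-valued function $g$, Fermat's rule gives $0\in\widehat\partial g(0)$; since $u\mapsto-\skalp{u^\ast,u}$ is smooth with gradient $-u^\ast$, the exact sum rule yields $u^\ast\in\widehat\partial h(0)\subseteq\partial h(0)$, where $h:=\psi\circ A$ and $\psi:=\kappa\norm{u^\ast}\,\dist{\cdot,K}$ is globally Lipschitz. By the chain rule for composition with the linear map $A$ (the qualification condition being automatic since $\psi$ is Lipschitz), together with the positive scalar multiple rule and the standard inclusion $\partial\dist{\cdot,K}(0)\subseteq N_K(0)$ valid because $0\in K$, we obtain $u^\ast\in A^\ast\big(\kappa\norm{u^\ast}\,N_K(0)\big)=A^\ast N_K(0)$, where the last equality uses that $N_K(0)$ is a cone. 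Thus $u^\ast\in\nabla P(\zb)^\ast N_{T_D(P(\zb))}(0)$, as desired. All the calculus rules invoked are standard; see \cite{RoWe98}.

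The one non-routine ingredient is the verification that $0$ minimizes the exact penalty $g$: this is where the conical structure is used, through the \emph{global} estimate of Lemma \ref{LemConicalMult}, and where the penalty weight $\kappa\norm{u^\ast}$ must dominate the Lipschitz modulus $\norm{u^\ast}$ of the linear part (the computation above is precisely this matching). The remainder is bookkeeping with the sum and chain rules for the limiting subdifferential.
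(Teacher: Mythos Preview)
The paper does not supply its own proof of Theorem~\ref{ThLinM_Stat}; it is quoted verbatim as a known result from Benko and Gfrerer \cite[Proposition~3]{BeGfr17a}. Consequently there is no in-paper argument to compare against.

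Your proof is correct and self-contained. The second inclusion is immediate from Lemma~\ref{LemInclTangCone}, as you note. For the first inclusion your exact-penalization route is sound: the global error bound from Lemma~\ref{LemConicalMult} (applicable since $\Gr M$ is a closed cone) makes $0$ a global minimizer of $g(u)=-\skalp{u^\ast,u}+\kappa\norm{u^\ast}\dist{Au,K}$, and the subsequent Fermat rule, the exact sum rule $\widehat\partial(-\skalp{u^\ast,\cdot}+h)(0)=-u^\ast+\widehat\partial h(0)$, the Lipschitz chain rule $\partial(\psi\circ A)(0)\subseteq A^\ast\partial\psi(0)$, and the inclusion $\partial\dist{\cdot,K}(0)\subseteq N_K(0)$ for $0\in K$ are all standard facts from \cite{RoWe98}. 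The only point worth flagging explicitly is that you silently pass from $\widehat\partial h(0)$ to $\partial h(0)$ before invoking the chain rule; this is necessary because the regular subdifferential admits only the reverse chain-rule inclusion, but you do state the passage, so the argument is complete.
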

\begin{remark}\label{RemSubReg}
   Note that the assumptions of Theorem \ref{ThLinM_Stat} are fulfilled if MSCQ holds at $\zb$. Indeed, MSCQ implies GGCQ and metric subregularity of $u\rightrightarrows \nabla P(\zb)u- T_D(P(\zb))$ at $(0,0)$ follows from Lemma \ref{LemSubRegLinear}.
\end{remark}

If $\zb$ is B-stationary and the assumptions of Theorem \ref{ThLinM_Stat} are fulfilled, it follows from the second inclusion in \eqref{EqUpperInclRegNormal} and the definition that $\zb$ is M-stationary. Other constraint qualifications ensuring M-stationarity can be found in \cite{Ye05}. However, from the first inclusion in \eqref{EqUpperInclRegNormal} we also derive the necessary optimality condition
\begin{equation}\label{EqLinMStat}0\in \nabla f(\zb)+ \nabla P(\zb)^\ast N_{T_D(P(\zb))}(0)\end{equation}
and this is stronger than M-stationarity because we always have
\[N_{T_D(P(\zb))}(0)\subseteq N_D(P(\zb))\]
by \cite[Proposition 6.27]{RoWe98}.

\section{Linearized M-stationarity conditions\label{SecLM_stat}}

One of the  basic statements of this section is provided by the following proposition, which can be considered as a refinement of the necessary condition \eqref{EqLinMStat}.

\begin{proposition}\label{PropStrongFirstOrder}
  Let $\zb$ be B-stationary for the optimization problem \eqref{EqGenOptProbl} and assume that GGCQ is fulfilled, while the mapping $u\rightrightarrows \nabla P(\zb)u- T_D(P(\zb))$ is metrically subregular at $(0,0)$. Then one of the following two conditions is fulfilled:
  \begin{enumerate}
    \item There is $w\in T_D(P(\zb))$  and a multiplier  $w^\ast \in\widehat N_{T_D(P(\zb))}(w)$ such that
  \begin{equation}
    \label{EqKKT}\nabla f(\zb) +\nabla P(\zb)^\ast   w^\ast  =0.
  \end{equation}
    \item There is $\ub\in \TlinO(\bar z)$ such that
    \begin{eqnarray}
\label{EqNotZero}&&\nabla P(\zb)\ub\not\in \Lsp(T_D(P(\zb))),\\
\label{EqKKTCritDir}      &&\skalp{\nabla  f(\zb), \ub}=0,\\
\label{EqKKTNormal}      &&0\in \nabla f(\zb)+\widehat N_{\TlinO(\bar z)}(\ub)
    \end{eqnarray}
    and $T_D(P(\zb))$ is  not locally polyhedral near $\nabla P(\zb)\ub$.
  \end{enumerate}
\end{proposition}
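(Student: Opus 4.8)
The plan is to combine B-stationarity with GGCQ and then to peel the limiting normal cone to $T_D(P(\zb))$ off one linearization at a time, branching according to whether the resulting multiplier is already a regular normal. First I would record that B-stationarity together with \eqref{EqGGCQ} yields $-\nabla f(\zb)\in\widehat N_\Omega(\zb)=\big(\TlinO(\zb)\big)^\ast=\widehat N_{\TlinO(\zb)}(0)$, so that $\skalp{\nabla f(\zb),u}\ge 0$ for every $u\in\TlinO(\zb)$; and, since the hypotheses of Proposition~\ref{PropStrongFirstOrder} are exactly those of Theorem~\ref{ThLinM_Stat}, the first inclusion in \eqref{EqUpperInclRegNormal} supplies a multiplier $w^\ast\in N_{T_D(P(\zb))}(0)$ with $\nabla f(\zb)+\nabla P(\zb)^\ast w^\ast=0$. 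Write $K:=T_D(P(\zb))$ and $p:=P(\zb)$; then $K$ is a closed cone with $0\in K$, and one checks $w^\ast\in\Lsp(K)^\perp$ (regular normals to a cone $C$ annihilate $\Lsp(C)$ by Lemma~\ref{LemLineality}, and this passes to the limit).

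Next I would apply Lemma~\ref{LemInclTangCone} to $K$ at $0$, which gives $N_K(0)=\widehat N_K(0)\cup\bigcup_{0\ne w\in\R^s}N_K(w)$ with $\widehat N_K(0)=\widehat N_D(p)$. If $w^\ast\in\widehat N_K(0)$, condition~1 holds with $w=0$ and we are done, so I assume $w^\ast\notin\widehat N_K(0)$; then $w^\ast\in N_K(w_0)$ for some $w_0\ne 0$, and the rescaling argument from the proof of Lemma~\ref{LemInclTangCone} (working in the quotient $\R^s/\Lsp(K)$, in whose orthogonal complement $w^\ast$ lies and which leaves $\widehat N_K(0)$ unchanged) lets me take $\|w_0\|=1$ with $w_0\in K\setminus\Lsp(K)$. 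From here on I assume condition~1 fails; then $w^\ast\notin\widehat N_K(w)$ for \emph{every} $w\in K$, and I claim $K$ is not locally polyhedral near $w_0$. Indeed, were it, then \eqref{EqLimNormalDirPoly} applied to $K$ at $w_0$ would give $N_K(w_0)=\bigcup_{v}\widehat N_{T_K(w_0)}(v)$, and since for a polyhedral set $\widehat N_{T_K(w_0)}(v)$ coincides with $\widehat N_K(w_0+tv)$ for all small $t>0$ (with $w_0+tv\in K$), this would force $w^\ast\in\widehat N_K(w')$ for some $w'\in K$ --- contradicting the failure of condition~1. Thus the non-polyhedrality clause of condition~2 is in place, and $w_0\notin\Lsp(K)$ will give \eqref{EqNotZero}, once I have produced a matching $\ub$.

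The remaining, and main, task is to produce $\ub\in\TlinO(\zb)$ such that $\nabla P(\zb)\ub$ plays the role of $w_0$ and \eqref{EqKKTNormal} holds; condition \eqref{EqKKTCritDir} is then free, because $\pm\ub\in T_{\TlinO(\zb)}(\ub)$ (as $\TlinO(\zb)$ is a cone containing $\ub$) forces every element of $\widehat N_{\TlinO(\zb)}(\ub)$, in particular $-\nabla f(\zb)$, to annihilate $\ub$. The obstruction is that $w^\ast$ need only be a \emph{limiting}, not a regular, normal to $K$ at $w_0$: using the metric subregularity of $u\rightrightarrows\nabla P(\zb)u-K$ at $(\ub,0)$ --- available from Lemma~\ref{LemConicalMult}, since the graph of this map is a closed cone --- one has $T_{\TlinO(\zb)}(\ub)=\{u':\nabla P(\zb)u'\in T_K(w_0)\}$ by \cite[Proposition~1]{HenOut05}, so \eqref{EqKKTNormal} reduces to $w^\ast\in\big(T_K(w_0)\big)^\ast=\widehat N_K(w_0)$, which can fail. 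My way around this is to carry out the previous two steps \emph{directionally}: replace Theorem~\ref{ThLinM_Stat} by a directional sharpening which, along a critical direction $\ub\in\TlinO(\zb)$ witnessing the failure of condition~1, delivers $-\nabla f(\zb)=\nabla P(\zb)^\ast w^\ast$ with $w^\ast$ a \emph{regular} normal to the iterated tangent cone $T_K(\nabla P(\zb)\ub)$ at $0$; together with the subregularity at $(\ub,0)$ and Lemma~\ref{LemSubRegLinear}, this is precisely \eqref{EqKKTNormal}. One then iterates: at each stage one either lands in the regular-normal case (condition~1) or passes to a strictly smaller iterated tangent cone, so by finite-dimensionality the process stops, and at termination the last critical direction $\ub$ satisfies \eqref{EqNotZero}, \eqref{EqKKTCritDir}, \eqref{EqKKTNormal} and --- by the Step~2 argument applied to the terminal cone --- the non-polyhedrality requirement, which together are condition~2.

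The step I expect to be the main obstacle is exactly this last one: setting up the directional refinement of Theorem~\ref{ThLinM_Stat} (so that the multiplier becomes a regular normal to an appropriate iterated tangent cone along the right critical direction) and proving that the resulting iteration terminates in one of the two stated configurations. Everything else --- the opening reduction, the automatic verification of \eqref{EqKKTCritDir}, and the polyhedral reduction --- should be routine bookkeeping with Lemmas~\ref{LemInclTangCone}, \ref{LemConicalMult}, \ref{LemSubRegLinear} and \ref{LemLineality} together with \eqref{EqLimNormalDirPoly}.
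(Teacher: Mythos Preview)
Your approach has a genuine gap, and it is precisely the one you flag as ``the main obstacle'' --- but the difficulty is structural, and your proposed workaround does not resolve it.

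You work from the multiplier side: you obtain $w^\ast\in N_K(0)$ via Theorem~\ref{ThLinM_Stat}, locate it at some $w_0\in K\setminus\{0\}$, and then try to manufacture a matching $\ub$ in the domain. But nothing ties $w_0$ to the range of $\nabla P(\zb)$; the non-polyhedrality you establish is at $w_0$, not at any $\nabla P(\zb)\ub$, and the ``directional refinement of Theorem~\ref{ThLinM_Stat}'' you invoke presupposes a critical direction $\ub$ that you have not yet produced. The iteration you sketch is in fact what the paper does \emph{after} Proposition~\ref{PropStrongFirstOrder} to obtain Theorem~\ref{ThStrongFirstOrder}; it cannot serve as the proof of the single-step statement itself. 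You also over-worry about \eqref{EqKKTNormal}: once $\ub\in\TlinO(\zb)$ satisfies $\skalp{\nabla f(\zb),\ub}=0$, B-stationarity together with GGCQ makes $\ub$ a global minimizer of $u\mapsto\skalp{\nabla f(\zb),u}$ over $\TlinO(\zb)$, so \eqref{EqKKTNormal} is automatic. The real content is producing $\ub$ for which \eqref{EqNotZero} and the non-polyhedrality at $\nabla P(\zb)\ub$ hold, and your outline does not do this.

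The paper's proof is entirely different and constructs $\ub$ directly. One introduces the auxiliary program
\[
\min_{(u,y)}\ \skalp{\nabla f(\zb),u}+\tfrac12\norm{y}^2\quad\text{subject to}\quad \nabla P(\zb)u+y\in K,
\]
and shows (Lemma~\ref{LemAuxProbl}) that it is bounded below and that every global solution is S-stationary, hence yields condition~1. So if condition~1 fails, the infimum is finite but unattained; take a minimizing sequence $(u_k,y_k)$ with each $u_k$ chosen of minimal norm among those with the same objective slice. Then $\norm{u_k}\to\infty$, $\norm{y_k}/\norm{u_k}\to 0$, and any cluster point $\ub$ of $u_k/\norm{u_k}$ is a critical direction in $\TlinO(\zb)$, giving \eqref{EqKKTCritDir} and \eqref{EqKKTNormal}. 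Condition~\eqref{EqNotZero} comes from the minimal-norm choice: if $\nabla P(\zb)\ub\in\Lsp(K)$, subtracting $\norm{u_k}\ub$ from $u_k$ preserves feasibility and the objective while strictly decreasing the norm. Non-polyhedrality at $\nabla P(\zb)\ub$ holds because, were $K$ locally polyhedral there, the restriction of the auxiliary program to the polyhedral cone generated by a neighborhood of $\nabla P(\zb)\ub$ would, by Lemma~\ref{LemSolQP}, attain its infimum; since the minimizing sequence eventually enters this cone, that would produce a global minimizer of the full auxiliary program --- a contradiction. The slack variable $y$ with its quadratic penalty is the key device you are missing.
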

Before proving this theorem we discuss some of its issues.
We will call a direction $u\in\TlinO(\zb)$ satisfying \eqref{EqKKTCritDir} a {\em critical direction} for the problem \eqref{EqGenOptProbl}. Now assume that the first statement of Proposition \ref{PropStrongFirstOrder} fails to hold and thus there exist $\ub$ fulfilling the second statement. Let us rename $\ub$ by $u_1$.
From \eqref{EqNotZero} it follows that $\nabla P(\zb)u_1\not=0$ and thus $u_1\not=0$ as well. Further, since $u_1$ is a critical direction and $\zb$ is assumed to be B-stationary for the problem \eqref{EqGenOptProbl}, it follows that $u_1$ is a global solution of the program
\begin{equation}\label{EqLinProbl}\min \skalp{\nabla f(\zb),u}\quad\mbox{subject to}\quad\nabla P(\zb)u\in T_D(P(\zb))\end{equation}
and \eqref{EqKKTNormal} is the corresponding B-stationarity condition. This is not really surprising, but the important point is that we can apply Proposition \ref{PropStrongFirstOrder} once more to the problem \eqref{EqLinProbl} at $u_1$. Indeed, since the mapping $u\rightrightarrows \nabla P(\zb)u- T_D(P(\zb))$ is assumed to be metrically subregular at $(0,0)$ and its graph is a closed cone, by Lemma \ref{LemConicalMult} it is metrically subregular at $(u_1,0)$ as well. By taking into account Remark \ref{RemSubReg} we see that GGCQ holds for the system $\nabla P(\zb)u\in T_D(P(\zb))$ at $u_1$ and the linearized mapping $u\rightrightarrows \nabla P(\zb)u- T_{T_D(P(\zb))}(\nabla P(\zb)u_1)$ is metrically subregular at $(0,0)$. Thus we can apply
Proposition \ref{PropStrongFirstOrder} to obtain either the existence of some direction $w\in  T_{T_D(P(\zb))}(\nabla P(\zb)u_1)$ and some multiplier $w^\ast  \in \widehat N_{T_{T_D(P(\zb))}(\nabla P(\zb)u_1)}(w)$ such that \eqref{EqKKT} holds or the existence of some direction $u_2\in \TlinOk{1}(\zb;u_1):=\{u\mv \nabla P(\zb)u\in T_{T_D(P(\zb))}(\nabla P(\zb)u_1)\}$ such that
    \begin{eqnarray*}
&&\nabla P(\zb)u_2\not\in \Lsp(T_{T_D(P(\zb))}(\nabla P(\zb)u_1)),\\
&&\skalp{\nabla  f(\zb), u_2}=0,\\
&&0\in \nabla f(\zb)+\widehat N_{\Tlin_{\nabla P(\zb),T_D(P(\zb))}(u_1)}(u_2)
    \end{eqnarray*}
and $T_{T_D(P(\zb))}(\nabla P(\zb)u_1)$ is  not locally polyhedral near $\nabla P(\zb)u_2$. Again, if the first case does not emerge we can repeat the procedure.
Let us recursively define for $\yb\in D$ and directions $v_1,v_2,\ldots$ the following $k$-th order tangent cones to $D$ by
\[T^0_D(\yb):=T_D(\yb),\ T^k_D(\yb;v_1,\ldots,v_k):=T_{T^{k-1}_D(\yb;v_1,\ldots,v_{k-1})}(v_k),\ k\geq 1.\]
Note that by the definition of the tangent cone we have $T^k_D(\yb;v_1,\ldots,v_k)=\emptyset$ if $v_k\not\in T^{k-1}_D(\yb;v_1,\ldots,v_{k-1})$.
Then we can also define the following $k$-th order linearized tangent cones to $\Omega$ by
\begin{align*}&\TlinOk{0}(\zb)=\TlinO(\zb),\\
&\TlinOk{k}(\zb;u_1,\ldots,u_k):=\{u\mv \nabla P(\zb)u\in T^k_D(P(\zb);\nabla P(\zb)u_1,\ldots,\nabla P(\zb)u_k)\},\ k\geq 1.
\end{align*}
When we apply Proposition \ref{PropStrongFirstOrder} the k-th time we find either  a direction
\[w\in T^{k-1}_D(P(\zb);\nabla P(\zb)u_1,\ldots,\nabla P(\zb)u_{k-1})\]
together with a multiplier
\[w^\ast  \in\widehat N_{T^{k-1}_D(P(\zb);\nabla P(\zb)u_1,\ldots,\nabla P(\zb)u_{k-1})}(w)\] such that $\nabla f(\zb)+\nabla P(\zb)^\ast  w^\ast  =0$ or a direction $u^k\in \TlinOk{k-1}(\zb;u_1,\ldots,u_{k-1})$ such that
\begin{align}
\label{EqAux1}&\nabla P(\zb)u_k\not\in \Lsp(T^{k-1}_D(P(\zb);\nabla P(\zb)u_1,\ldots,\nabla P(\zb)u_{k-1})),\\
&\skalp{\nabla  f(\zb), u_k}=0,\\
&0\in \nabla f(\zb)+\widehat N_{\TlinOk{k-1}(\zb;u_1,\ldots,u_{k-1})}(u_k)
\end{align}
and $T^{k-1}_D(P(\zb);\nabla P(\zb)u_1,\ldots,\nabla P(\zb)u_{k-1})$ is  not locally polyhedral near $\nabla P(\zb)u_k$. Next observe that we cannot infinitely often apply Proposition \ref{PropStrongFirstOrder}. By Lemma \ref{LemLineality} we have
\begin{align*}\lefteqn{\Lsp(T^k_D(P(\zb);\nabla P(\zb)u_1,\ldots,\nabla P(\zb)u_k))}\\
&\supseteq\Lsp(T^{k-1}_D(P(\zb);\nabla P(\zb)u_1,\ldots,\nabla P(\zb)u_{k-1}))+[\nabla P(\zb)u_k]\end{align*}
and together with \eqref{EqAux1} we obtain
\begin{align*}\lefteqn{\dim \Lsp(T^k_D(P(\zb);\nabla P(\zb)u_1,\ldots,\nabla P(\zb)u_k))}\\
&\geq \dim\Lsp(T^{k-1}_D(P(\zb);\nabla P(\zb)u_1,\ldots,\nabla P(\zb)u_{k-1}))+1.
\end{align*}
Since we work in finite dimensions the finiteness of $k$ follows. Summing up we have  shown the following theorem.
\begin{theorem}
  \label{ThStrongFirstOrder}  Let $\zb$ be B-stationary for the optimization problem \eqref{EqGenOptProbl} and assume that GGCQ is fulfilled, while the mapping $u\rightrightarrows \nabla P(\zb)u- T_D(P(\zb))$ is metrically subregular at $(0,0)$. Then there exists a natural number $k\geq 0$, directions $u_1,\ldots,u_k$ and $w\in T^k_D(P(\zb);\nabla P(\zb)u_1,\ldots,\nabla P(\zb)u_k)$ and a multiplier
  $w^\ast  \in\widehat N_{T^k_D(P(\zb);\nabla P(\zb)u_1,\ldots,\nabla P(\zb)u_k)}(w)$ such that
  \[\nabla f(\zb)+\nabla P(\zb)^\ast  w^\ast  =0.\]
  Moreover, for every $l=1,\ldots,k$ we have
  \begin{align}
\label{Eq_u_l1}    &u_l\in \TlinOk{l-1}(\zb;u_1,\ldots,u_{l-1})\\
\label{Eq_u_l2}    &\nabla P(\zb)u_l\not\in \Lsp(T^{l-1}_D(P(\zb);\nabla P(\zb)u_1,\ldots,\nabla P(\zb)u_{l-1})),\\
\label{Eq_u_l3}    &\skalp{\nabla f(\zb),u_l}=0
  \end{align}
  and $T^{l-1}_D(P(\zb);\nabla P(\zb)u_1,\ldots,\nabla P(\zb)u_{l-1})$ is  not locally polyhedral near $\nabla P(\zb)u_l$.
\end{theorem}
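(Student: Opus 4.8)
The plan is to prove the theorem by the recursive procedure already sketched in the discussion preceding it: apply Proposition \ref{PropStrongFirstOrder} repeatedly, each application either terminating the process with the asserted stationarity equation (its alternative~1) or producing one more critical direction $u_l$ that satisfies \eqref{Eq_u_l1}--\eqref{Eq_u_l3} together with the non-polyhedrality condition (its alternative~2). The first application is to \eqref{EqGenOptProbl} at $\zb$ itself, whose hypotheses are the standing assumptions; if alternative~1 occurs the theorem holds with $k=0$, while if alternative~2 occurs it delivers $u_1\in\TlinO(\zb)$ with the stated properties, \eqref{EqKKTNormal} reading $0\in\nabla f(\zb)+\widehat N_{\TlinO(\zb)}(u_1)$.

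The core of the argument is the recursive step. Suppose $u_1,\dots,u_k$ have been produced by consecutive occurrences of alternative~2, and consider the next linearized program
\[\min\ \skalp{\nabla f(\zb),u}\quad\mbox{subject to}\quad \nabla P(\zb)u\in T^{k-1}_D(P(\zb);\nabla P(\zb)u_1,\ldots,\nabla P(\zb)u_{k-1}),\]
at the base point $u_k$ (for $k=1$ this is \eqref{EqLinProbl} at $u_1$). I would verify that the three hypotheses of Proposition \ref{PropStrongFirstOrder} hold for it. B-stationarity of $u_k$ is exactly the relation \eqref{EqKKTNormal} produced by the preceding application, since $\TlinOk{k-1}(\zb;u_1,\ldots,u_{k-1})$ is the feasible set of the present program. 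The mapping $u\rightrightarrows\nabla P(\zb)u-T^{k-1}_D(P(\zb);\nabla P(\zb)u_1,\ldots,\nabla P(\zb)u_{k-1})$ has a closed cone as its graph and is metrically subregular at $(0,0)$---for $k=1$ this is the standing assumption on $u\rightrightarrows\nabla P(\zb)u-T_D(P(\zb))$, and for $k\geq 2$ it was established at the end of the preceding step---so Lemma \ref{LemConicalMult} upgrades it to metric subregularity at $(u_k,0)$, which is legitimate because $\nabla P(\zb)u_k$ lies in that iterated tangent cone by \eqref{Eq_u_l1}. The implication MSCQ $\Rightarrow$ GGCQ (Remark \ref{RemSubReg}) then gives GGCQ for the present program at $u_k$, and Lemma \ref{LemSubRegLinear}, applied to the affine map $u\mapsto\nabla P(\zb)u$ at the feasible point $u_k$, yields metric subregularity at $(0,0)$ of the once-more linearized mapping $u\rightrightarrows\nabla P(\zb)u-T^{k}_D(P(\zb);\nabla P(\zb)u_1,\ldots,\nabla P(\zb)u_k)$. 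Hence Proposition \ref{PropStrongFirstOrder} applies once more: its alternative~1 yields $w\in T^{k}_D(P(\zb);\nabla P(\zb)u_1,\ldots,\nabla P(\zb)u_k)$ and $w^\ast\in\widehat N_{T^{k}_D(P(\zb);\nabla P(\zb)u_1,\ldots,\nabla P(\zb)u_k)}(w)$ with $\nabla f(\zb)+\nabla P(\zb)^\ast w^\ast=0$, proving the theorem; its alternative~2 produces $u_{k+1}$ obeying \eqref{Eq_u_l1}--\eqref{Eq_u_l3} and the non-polyhedrality statement for $l=k+1$, and the recursion continues.

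It remains to show the recursion is finite. Each time alternative~2 occurs, Lemma \ref{LemLineality} applied to the closed cone $T^{k-1}_D(P(\zb);\nabla P(\zb)u_1,\ldots,\nabla P(\zb)u_{k-1})$ at its point $\nabla P(\zb)u_k$ gives
\[\Lsp\big(T^{k}_D(P(\zb);\nabla P(\zb)u_1,\ldots,\nabla P(\zb)u_k)\big)\ \supseteq\ \Lsp\big(T^{k-1}_D(P(\zb);\nabla P(\zb)u_1,\ldots,\nabla P(\zb)u_{k-1})\big)+[\nabla P(\zb)u_k],\]
while \eqref{Eq_u_l2} forbids $\nabla P(\zb)u_k$ from lying in the smaller lineality space (so in particular $\nabla P(\zb)u_k\neq 0$); hence the dimension of the lineality space strictly increases at each step. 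Since these cones all lie in $\R^s$, alternative~2 can occur at most $s$ times, so after finitely many applications alternative~1 must occur, and the directions $u_1,\dots,u_k$ accumulated en route together with the resulting $w$ and $w^\ast$ give the assertions of the theorem.

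I expect the part requiring most care to be the inductive preservation of the qualification hypotheses in the second paragraph: one must confirm that the successive iterated tangent cones remain closed cones, that $u_k$ is feasible for the following linearized program (true because a cone contains the ray through each of its points), and---above all---that the chain Lemma \ref{LemConicalMult} $\to$ Lemma \ref{LemSubRegLinear} is invoked with the correct objects in the roles of the constraint derivative ($\nabla P(\zb)$), the set $D$ (the relevant iterated tangent cone), and the reference point ($u_k$). With that bookkeeping in place, the transfer of B-stationarity, the dimension count, and the collection of \eqref{Eq_u_l1}--\eqref{Eq_u_l3} are routine.
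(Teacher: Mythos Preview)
Your proposal is correct and follows essentially the same route as the paper: iterated application of Proposition~\ref{PropStrongFirstOrder}, with the constraint-qualification hypotheses propagated via Lemma~\ref{LemConicalMult} and Remark~\ref{RemSubReg}/Lemma~\ref{LemSubRegLinear}, and termination ensured by the strict growth of the lineality-space dimension through Lemma~\ref{LemLineality} combined with \eqref{Eq_u_l2}. The paper presents exactly this argument in the discussion between Proposition~\ref{PropStrongFirstOrder} and the statement of Theorem~\ref{ThStrongFirstOrder}, so your write-up matches it in both structure and detail.
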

It is easy to see that Theorem \ref{ThStrongFirstOrder} considerably strengthen the necessary optimality condition \eqref{EqLinMStat}, which in turn is stronger than the usual M-stationary condition.  As candidates for the multipliers $w^\ast  $ fulfilling the first-order optimality condition \eqref{EqKKT1}
we consider multipliers fulfilling
\begin{equation}\label{EqMultIncl}w^\ast   \in \widehat N_{T^k_D(P(\zb),\nabla P(\zb)u_1,\ldots,\nabla P(\zb)u_k)}(w)\end{equation}
for some $w\in T^k_D(P(\zb),\nabla P(\zb)u_1,\ldots,\nabla P(\zb)u_k)$, where the directions $u_l$, $l=1,\ldots,k$ fulfill the conditions of Theorem \ref{ThStrongFirstOrder}. By applying the following lemma we immediately obtain that the set on the right hand side of the inclusion \eqref{EqMultIncl} is contained in $N_{T_D(P(\zb))}(0)\subseteq N_D(P(\zb))$.

\begin{lemma}Let $\yb\in D$. Then for every collection of directions $v_1,\ldots,v_l\in\R^s$ we have
\[\widehat N_{T^{l-1}_D(\yb;v_1,\ldots,v_{l-1})}(v_l)\subseteq N_{T^{l-1}_D(\yb;v_1,\ldots,v_{l-1})}(v_l)\subseteq N_{T_D(\yb)}(0)\subseteq N_D(\yb).\]
\end{lemma}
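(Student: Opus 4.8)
The plan is to check the three inclusions in turn; only the middle one requires any work. The first inclusion $\widehat N_{T^{l-1}_D(\yb;v_1,\ldots,v_{l-1})}(v_l)\subseteq N_{T^{l-1}_D(\yb;v_1,\ldots,v_{l-1})}(v_l)$ is immediate from Definition \ref{DefCones}: every regular normal is a limiting normal (take constant sequences), and if $v_l$ does not lie in the set in question then, by convention, both cones are empty. The third inclusion $N_{T_D(\yb)}(0)\subseteq N_D(\yb)$ is precisely the first inclusion in \eqref{EqLimNormalDir2} of Lemma \ref{LemInclTangCone} with $\Omega:=D$ and $\zb:=\yb$.

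For the middle inclusion, set $C_k:=T^k_D(\yb;v_1,\ldots,v_k)$, so that $C_0=T_D(\yb)$ and, by the recursive definition, $C_k=T_{C_{k-1}}(v_k)$; in particular each $C_k$ is a closed cone and is the tangent cone of the closed set $C_{k-1}$ at the point $v_k$, as long as $v_k\in C_{k-1}$. If $v_j\notin C_{j-1}$ for some $j\le l-1$, then $C_{l-1}=\emptyset$ and there is nothing to prove, so we may assume $v_j\in C_{j-1}$ for $j=1,\ldots,l-1$. I would then isolate two consequences of Lemma \ref{LemInclTangCone}, valid for any closed $\Omega$ and any $\zb\in\Omega$: (a) $N_{T_\Omega(\zb)}(0)\subseteq N_\Omega(\zb)$; and (b) $N_{T_\Omega(\zb)}(v)\subseteq N_{T_\Omega(\zb)}(0)$ for every $v\in\R^s$ --- indeed, for $v\neq 0$ the set $N_{T_\Omega(\zb)}(v)$ is one of the terms of the union $\bigcup_{0\neq w}N_{T_\Omega(\zb)}(w)$ occurring in the equality in \eqref{EqLimNormalDir2}, while for $v=0$ the inclusion is trivial and for $v\notin T_\Omega(\zb)$ the left-hand side is empty.

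The middle inclusion then drops out by telescoping. For $k=1,\ldots,l-1$, fact (a) applied with $\Omega:=C_{k-1}$, $\zb:=v_k$ gives $N_{C_k}(0)=N_{T_{C_{k-1}}(v_k)}(0)\subseteq N_{C_{k-1}}(v_k)$, while fact (b) applied to the tangent cone $C_{k-1}$ (namely $C_{k-1}=T_{C_{k-2}}(v_{k-1})$, or $C_0=T_D(\yb)$ when $k=1$) at the point $v_k$ gives $N_{C_{k-1}}(v_k)\subseteq N_{C_{k-1}}(0)$; hence $N_{C_k}(0)\subseteq N_{C_{k-1}}(0)$. Applying fact (b) once more at the point $v_l$ gives $N_{C_{l-1}}(v_l)\subseteq N_{C_{l-1}}(0)$, and therefore
\[N_{C_{l-1}}(v_l)\subseteq N_{C_{l-1}}(0)\subseteq N_{C_{l-2}}(0)\subseteq\cdots\subseteq N_{C_0}(0)=N_{T_D(\yb)}(0).\]
Chaining this with the first and third inclusions completes the proof.

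I do not expect a genuine obstacle: the argument is a repeated, essentially bookkeeping, application of Lemma \ref{LemInclTangCone}. The two points that merit a moment's attention are recognizing that every $k$-th order tangent cone $T^k_D(\yb;v_1,\ldots,v_k)$ is itself the tangent cone of a closed set --- which is exactly what makes \eqref{EqLimNormalDir2} applicable at each step of the telescoping --- and correctly disposing of the degenerate case in which one of the iterated tangent cones becomes empty.
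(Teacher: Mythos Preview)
Your proof is correct and takes essentially the same approach as the paper: both arguments reduce everything to the two consequences of Lemma~\ref{LemInclTangCone} that you label (a) and (b), and then iterate. The only cosmetic difference is that the paper packages the iteration as a formal induction on $l$, while you write it out as a telescoping chain $N_{C_{l-1}}(v_l)\subseteq N_{C_{l-1}}(0)\subseteq\cdots\subseteq N_{C_0}(0)$; the underlying steps are identical.
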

\begin{proof}
We will show the lemma by induction with respect to the number of directions $l$. Indeed, for $l=1$ the claimed inclusions  hold true because for all $v_1$ we have
$\widehat N_{T_D^0(\yb)}(v_1)\subseteq N_{T_D^0(\yb)}(v_1)\subseteq N_{T_D(\yb)}(0)\subseteq N_D(\yb)$ by the definitions of the regular/limiting normal cone and \eqref{EqLimNormalDir2}. Now assume that the claim holds true for some number $l\geq 1$ and consider arbitrary directions $v_1,\ldots,v_{l+1}$. Then by the definitions of the regular/limiting normal cone, \eqref{EqLimNormalDir2} and the induction hypothesis we obtain
\begin{align*}
  \widehat N_{T^l_D(\yb;v_1,\ldots,v_l)}(v_{l+1})&\subseteq N_{T^l_D(\yb;v_1,\ldots,v_l)}(v_{l+1})= N_{T_{T^{l-1}_D(\yb;v_1,\ldots,v_{l-1})}(v_l)}(v_{l+1})\\
  &\subseteq N_{T_{T^{l-1}_D(\yb;v_1,\ldots,v_{l-1})}(v_l)}(0)\subseteq N_{T^{l-1}_D(\yb;v_1,\ldots,v_{l-1})}(v_l)\\
  &\subseteq N_{T_D(\yb)}(0)\subseteq N_D(\yb)
\end{align*}
and the lemma is proved.
\end{proof}
We do not know so much about the order $k$ appearing in Theorem \ref{ThStrongFirstOrder}. By using \eqref{Eq_u_l2} and Lemma \ref{LemLineality}, a rough upper estimate for $k$ is given by $\dim(\nabla P(\zb)\R^d)-\dim(\Lsp(T_D(P(\zb))\cap \nabla P(\zb)\R^d)$. However, in many examples we found that this bound is too pessimistic and the necessary optimality conditions of Theorem \ref{ThStrongFirstOrder} hold with small $k$, say $k=0,1$ or $2$. More research has to be done to investigate this circumstance.

Recall that a local minimizer $\zb$ for \eqref{EqGenOptProbl} is called a {\em sharp minimum} if there is a constant $\alpha>0$ such that
\[f(z)\geq f(\zb)+\alpha\norm{z-\zb}\]
holds for all feasible $z$ close to $\zb$.
\begin{lemma}\label{LemSharpMin}
  Assume that at $\zb$ GGCQ is fulfilled. Then $\zb$ is a sharp minimum if and only if there is some $\alpha'>0$ such that
  \begin{equation}\label{EqSharpMin}\skalp{\nabla f(\zb),u}\geq \alpha'\norm{u}\ \forall u\in\TlinO(\zb).\end{equation}
\end{lemma}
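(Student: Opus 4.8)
The plan is to characterize sharpness of the minimum through the linearized tangent cone, using GGCQ to transfer information back and forth between $\Omega$ and $\TlinO(\zb)$. For the "only if" direction I would argue by contradiction: suppose $\zb$ is a sharp minimum with constant $\alpha>0$ but \eqref{EqSharpMin} fails for every $\alpha'>0$. Then there is a sequence $u_k\in\TlinO(\zb)$ with $\norm{u_k}=1$ and $\skalp{\nabla f(\zb),u_k}\to c\le 0$; passing to a subsequence, $u_k\to u$ with $\norm{u}=1$, $u\in\TlinO(\zb)$ (the cone is closed) and $\skalp{\nabla f(\zb),u}\le 0$. Since $\zb$ is a sharp minimum it is in particular a local minimizer, hence B-stationary, so $\skalp{\nabla f(\zb),w}\ge 0$ for all $w\in T_\Omega(\zb)$; and because $\skalp{\nabla f(\zb),u}\le 0$ together with GACQ (which we get from GGCQ? — see the obstacle below) would force $\skalp{\nabla f(\zb),u}=0$. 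The real point is that a nonzero critical direction $u$ contradicts the sharp-minimum inequality: one produces feasible points $z_t=\zb+tu+o(t)\in\Omega$ along $u$ with $f(z_t)-f(\zb)=t\skalp{\nabla f(\zb),u}+o(t)=o(t)$, violating $f(z_t)\ge f(\zb)+\alpha\norm{z_t-\zb}=f(\zb)+\alpha t\norm{u}+o(t)$.

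For the "if" direction, assume \eqref{EqSharpMin} holds with $\alpha'>0$ and show $\zb$ is a sharp minimum. First-order expansion gives, for feasible $z$ near $\zb$,
\[
f(z)-f(\zb)=\skalp{\nabla f(\zb),z-\zb}+o(\norm{z-\zb}).
\]
The vector $z-\zb$ is approximately a tangent direction: writing $z-\zb=\norm{z-\zb}\,w_z$ with $\norm{w_z}=1$, one shows $\dist{w_z,T_\Omega(\zb)}\to 0$ as $z\to\zb$ in $\Omega$, hence by GACQ $\dist{w_z,\TlinO(\zb)}\to 0$. Choosing $v_z\in\TlinO(\zb)$ close to $w_z$ and using \eqref{EqSharpMin} (and positive homogeneity to rescale $v_z$ to have norm $\norm{z-\zb}$) yields $\skalp{\nabla f(\zb),z-\zb}\ge (\alpha'/2)\norm{z-\zb}$ for $z$ sufficiently close, and then the $o(\norm{z-\zb})$ term is absorbed to give $f(z)-f(\zb)\ge (\alpha'/4)\norm{z-\zb}$, i.e.\ sharpness. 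The continuity of $\nabla f$ is what makes the error term uniform.

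The main obstacle is the status of GACQ. The statement only assumes GGCQ, and in the excerpt the chain is $\mathrm{MSCQ}\Rightarrow\mathrm{GACQ}\Rightarrow\mathrm{GGCQ}$, so GGCQ is strictly weaker than GACQ in general. Hence I cannot directly use $T_\Omega(\zb)=\TlinO(\zb)$. The fix is to work at the dual level throughout. Note $\skalp{\nabla f(\zb),u}\ge\alpha'\norm u$ for all $u\in\TlinO(\zb)$ is equivalent to $\nabla f(\zb)\in\inn\big((\TlinO(\zb))^\ast\big)$ relative to $\TlinO(\zb)$'s span — more precisely it says $\nabla f(\zb)$ lies in the interior of $(\TlinO(\zb))^\ast + (\Span\TlinO(\zb))^\perp$ intersected appropriately; in any case it is a statement about $(\TlinO(\zb))^\ast$, which by GGCQ \eqref{EqGGCQ} equals $\widehat N_\Omega(\zb)$. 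On the other side, the sharp-minimum property is classically characterized (e.g.\ via \cite[Theorem 6.12, Exercise 6.?]{RoWe98} or directly) by $\nabla f(\zb)$ lying in the interior of $\widehat N_\Omega(\zb)$ relative to the tangent geometry, i.e.\ $\skalp{\nabla f(\zb),w}\ge\alpha\norm w$ for all $w\in T_\Omega(\zb)$. So the lemma reduces to: $\skalp{\nabla f(\zb),u}\ge\alpha'\norm u$ on $\TlinO(\zb)$ $\iff$ $\skalp{\nabla f(\zb),w}\ge\alpha\norm w$ on $T_\Omega(\zb)$. The forward direction is immediate from \eqref{EqInclGACQ}. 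The reverse — the delicate half — follows because both inequalities are equivalent to $\nabla f(\zb)\in\inn(\widehat N_\Omega(\zb)\cap\mathrm{(appropriate subspace)})$, and GGCQ identifies $\widehat N_\Omega(\zb)$ with $(\TlinO(\zb))^\ast$, whose "interior normals" are exactly the $u^\ast$ with $\skalp{u^\ast,u}>0$ for all nonzero $u\in\TlinO(\zb)$ — which by a standard compactness argument on the unit sphere of $\TlinO(\zb)$ upgrades to the uniform bound $\skalp{\nabla f(\zb),u}\ge\alpha'\norm u$. Thus I expect the proof to hinge on correctly formulating the "interior of the polar cone" characterization of a sharp minimum and then invoking GGCQ to replace $T_\Omega(\zb)$ by $\TlinO(\zb)$ at the dual level.
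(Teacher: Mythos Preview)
Your final dual approach for the ``only if'' direction is correct, and your ``if'' direction also goes through (though it is more laborious than necessary). A few comments and a comparison with the paper's proof.

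\medskip
\textbf{On the ``only if'' direction.} The detour through ``relative interior'' and ``appropriate subspace'' is unnecessary and is where your write-up becomes imprecise. The clean statement is simply: for any closed cone $C\subseteq\R^d$, the existence of $\alpha>0$ with $\skalp{x^\ast,u}\ge\alpha\norm{u}$ for all $u\in C$ is equivalent to $-x^\ast\in\inn C^\ast$ (ordinary topological interior in $\R^d$). One direction is immediate; the other is your compactness argument on $C\cap\{\norm{u}=1\}$. With this in hand, GGCQ gives $(T_\Omega(\zb))^\ast=\widehat N_\Omega(\zb)=(\TlinO(\zb))^\ast$, hence the interiors coincide, and you are done --- provided you have already established the purely ``internal'' characterization that $\zb$ is a sharp minimum iff $\skalp{\nabla f(\zb),w}\ge\alpha\norm{w}$ for all $w\in T_\Omega(\zb)$. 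That auxiliary equivalence is standard (and is exactly what the paper proves first), but you should state it and give the one-line argument rather than merely cite it.

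The paper executes the same idea on the primal side. It first shows directly (via sequences) that sharpness yields $\skalp{\nabla f(\zb),u}\ge\alpha\norm{u}$ for all $u\in T_\Omega(\zb)$, then extends this inequality to $\cl\co T_\Omega(\zb)$ by writing convex combinations and using the triangle inequality, and finally observes that dualizing GGCQ (bipolar) gives $\cl\co T_\Omega(\zb)=\cl\co\TlinO(\zb)\supseteq\TlinO(\zb)$. Your route and the paper's are equivalent formulations of the same fact --- equality of polars versus equality of closed convex hulls --- but the paper's explicit convex-combination step avoids the ``interior of the polar'' lemma and is arguably more self-contained.

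\medskip
\textbf{On the ``if'' direction.} You invoke GACQ here, but none is needed: the paper's argument is a two-line contradiction using only the always-valid inclusion $T_\Omega(\zb)\subseteq\TlinO(\zb)$. If $\zb$ were not a sharp minimum, a sequence of feasible $z_k\to\zb$ with $(f(z_k)-f(\zb))/\norm{z_k-\zb}\to c\le 0$ yields, after passing to a subsequence, a unit vector $u\in T_\Omega(\zb)\subseteq\TlinO(\zb)$ with $\skalp{\nabla f(\zb),u}\le 0$, contradicting \eqref{EqSharpMin}. Your direct expansion argument works too, but the ``by GACQ'' is superfluous.

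\medskip
In summary: your plan is sound once the spurious relative-interior language is removed; the paper's proof is the primal twin of your dual argument, carried out via an explicit extension to the closed convex hull rather than via interiors of polar cones.
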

\begin{proof}In order to show the sufficiency of \eqref{EqSharpMin} for $\zb$ being a sharp minimum, assume on the contrary that there is a sequence $z_k$ of feasible points converging to $\zb$ satisfying
\[\liminf_{k\to\infty}\frac{f(z_k)-f(\zb)}{\norm{z_k-\zb}}=\liminf_{k\to\infty} \skalp{\nabla f(\zb),\frac{z_k-\zb}{\norm{z_k-\zb}}}\leq 0\]
By passing to a subsequence we can assume that $\frac{z_k-\zb}{\norm{z_k-\zb}}$ converges to some $u$. Then $\skalp{\nabla f(\zb),u}\leq 0$ and $u\in T_\Omega(\zb)\subset \TlinO(\zb)$ contradicting \eqref{EqSharpMin}. To prove necessity of \eqref{EqSharpMin}, assume that $\zb$ is a sharp minimum and consider a tangent $u\in T_\Omega(\zb)$ together with sequences $t_k\downarrow 0$ and $u_k\to u$ satisfying $P(\zb+t_ku_k)\in D$. Then
\[f(\zb+t_ku_k)-f(\zb)=t_k\skalp{\nabla f(\zb),u_k}+\oo(t_k\norm{u_k})\geq \alpha t_k\norm{u_k}\]
and by dividing by $t_k$ and passing to the limit we obtain $\skalp{\nabla f(\zb),u}\geq \alpha\norm{u}$. Next consider $u\in \co T_\Omega(\zb)$ together with elements $u_1,\ldots u_K$ and positive scalars $\gamma_1,\ldots,\gamma_K$, $\sum_{i=1}^K\gamma_i=1$ such that $u=\sum_{i=1}^K\gamma_iu_i$. Then
\[\skalp{\nabla f(\zb), u}=\sum_{i=1}^K\gamma_i\skalp{\nabla f(\zb),u_i}\geq \alpha\sum_{i=1}^K\gamma_i\norm{u_i}\geq \alpha \norm{\sum_{i=1}^K\gamma_iu_i}=\alpha \norm{u}\]
and we easily conclude
\[\skalp{\nabla f(\zb), u}\geq \alpha \norm{u}\ \forall u\in\cl\co T_\Omega(\zb).\]
By dualizing \eqref{EqGGCQ} we have $\cl\co T_\Omega(\zb)=\cl\co \TlinO(\zb)$ and \eqref{EqSharpMin} follows.
\end{proof}

\begin{corollary}\label{CorSharpMin}
  Assume that $\zb$ is a sharp minimum for \eqref{EqGenOptProbl} and assume that GGCQ is fulfilled, while the mapping $u\rightrightarrows \nabla P(\zb)u- T_D(P(\zb))$ is metrically subregular at $(0,0)$. Then there is $w\in T_D(P(\zb))$  and a multiplier  $w^\ast \in\widehat N_{T_D(P(\zb))}(w)$ such that $\nabla f(\zb) +\nabla P(\zb)^\ast   w^\ast  =0$.
\end{corollary}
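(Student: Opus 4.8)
The plan is to derive Corollary~\ref{CorSharpMin} from Theorem~\ref{ThStrongFirstOrder} combined with Lemma~\ref{LemSharpMin}. First I would observe that a sharp minimum is, in particular, a (strict) local minimizer of \eqref{EqGenOptProbl}, hence B-stationary by \cite[Theorem 6.12]{RoWe98}. The remaining hypotheses of the corollary --- GGCQ at $\zb$ and metric subregularity of $u\rightrightarrows\nabla P(\zb)u-T_D(P(\zb))$ at $(0,0)$ --- are precisely those of Theorem~\ref{ThStrongFirstOrder}. Applying that theorem yields a natural number $k\geq 0$, directions $u_1,\ldots,u_k$, an element $w\in T^k_D(P(\zb);\nabla P(\zb)u_1,\ldots,\nabla P(\zb)u_k)$ and a multiplier $w^\ast\in\widehat N_{T^k_D(P(\zb);\nabla P(\zb)u_1,\ldots,\nabla P(\zb)u_k)}(w)$ with $\nabla f(\zb)+\nabla P(\zb)^\ast w^\ast=0$, together with the additional properties \eqref{Eq_u_l1}--\eqref{Eq_u_l3} for $l=1,\ldots,k$.

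The crucial step is to show $k=0$. Suppose, to the contrary, that $k\geq 1$. Then by \eqref{Eq_u_l1} we have $u_1\in\TlinOk{0}(\zb)=\TlinO(\zb)$, by \eqref{Eq_u_l2} we have $\nabla P(\zb)u_1\not\in\Lsp(T_D(P(\zb)))$ --- and since $0$ belongs to the subspace $\Lsp(T_D(P(\zb)))$, this forces $\nabla P(\zb)u_1\not=0$, hence $u_1\not=0$ --- and by \eqref{Eq_u_l3} we have $\skalp{\nabla f(\zb),u_1}=0$. On the other hand, because $\zb$ is a sharp minimum and GGCQ holds, Lemma~\ref{LemSharpMin} provides some $\alpha'>0$ with $\skalp{\nabla f(\zb),u}\geq\alpha'\norm{u}$ for all $u\in\TlinO(\zb)$; evaluating this at $u=u_1$ gives $0=\skalp{\nabla f(\zb),u_1}\geq\alpha'\norm{u_1}>0$, a contradiction. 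Hence $k=0$.

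With $k=0$, the conclusion of Theorem~\ref{ThStrongFirstOrder} reduces to the existence of $w\in T^0_D(P(\zb))=T_D(P(\zb))$ and $w^\ast\in\widehat N_{T_D(P(\zb))}(w)$ with $\nabla f(\zb)+\nabla P(\zb)^\ast w^\ast=0$, which is exactly the assertion of the corollary. I do not expect a genuine obstacle here: the argument is a short assembly of results already proved, and the only point needing a little care is the remark that $0\in\Lsp(T_D(P(\zb)))$, so that \eqref{Eq_u_l2} indeed excludes $u_1=0$ and the sharp-minimum estimate of Lemma~\ref{LemSharpMin} can be driven to a contradiction. An equivalent route would be to apply Proposition~\ref{PropStrongFirstOrder} directly and rule out its second alternative by the same appeal to Lemma~\ref{LemSharpMin}, the critical direction $\ub$ there playing the role of $u_1$ above.
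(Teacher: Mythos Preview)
Your proof is correct and follows essentially the same approach as the paper: the paper invokes Proposition~\ref{PropStrongFirstOrder} directly and rules out its second alternative via Lemma~\ref{LemSharpMin}, which is precisely the ``equivalent route'' you sketch at the end. Your main argument via Theorem~\ref{ThStrongFirstOrder} with the contradiction forcing $k=0$ is just the iterated version of the same idea and is equally valid.
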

\begin{proof}
  The statement follows immediately from Proposition \ref{PropStrongFirstOrder}, because by Lemma \ref{LemSharpMin} the second alternative of Proposition \ref{PropStrongFirstOrder} is not possible.
\end{proof}
Note that the conclusion of Corollary \ref{CorSharpMin} can also hold  in situations when $\zb$ is not a sharp minimum. Besides the cases when there does not exist a direction $\bar u$ fulfilling the conditions of the second alternative of Proposition \ref{PropStrongFirstOrder}, the first alternative of Proposition \ref{PropStrongFirstOrder} holds true if there exists some direction $\bar u$ satisfying $\skalp{\nabla f(\zb),\bar u}=0$, $\nabla P(\zb)\bar u\in T_D(P(\zb))$ such that $\bar u$ is an S-stationary solution of \eqref{EqLinProbl} because then $0\in\nabla f(\zb)+\nabla P(\zb)^\ast\widehat N_{T_D(P(\zb))}(\nabla P(\zb)\bar u)$ by the definition of S-stationarity. By Theorem \ref{ThSuffS_Stat} we know that the condition \[ \nabla P(\zb)\R^d +\Lsp(T_{T_D(P(\zb))})(\nabla P(\zb)\bar u)=\R^s\]
is sufficient for S-stationarity of $\bar u$ and since $\Lsp(T_{T_D(P(\zb))})(\nabla P(\zb)\bar u)$ is always larger than $\Lsp(T_D(P(\zb)))$ it is possible that such an S-stationary solution $\bar u$ of \eqref{EqLinProbl} exists even if $\zb$ is not S-stationary for \eqref{EqGenOptProbl}.

We now turn to the proof of Proposition \ref{PropStrongFirstOrder}. At first we need some prerequisites. As introduced in the recent paper by Benko and Gfrerer \cite{BeGfr16d}, consider the program
  \begin{equation}\label{EqAuxProgr}\min_{(u,y)\in\R^d\times\R^s}\skalp{\nabla f(\zb),u} +\frac12 \norm{y}^2\quad \mbox{subject to}\quad\nabla P(\zb)u+y\in T_D(P(\zb)).
  \end{equation}
\begin{lemma}\label{LemAuxProbl}Assume that the assumptions of Proposition  \ref{PropStrongFirstOrder} are fulfilled. Then  MSCQ holds for the system $\nabla P(\zb)u+y\in T_D(P(\zb))$ at every point $(\bar u,\yb)$ feasible   for \eqref{EqAuxProgr}. Further, the program \eqref{EqAuxProgr} is bounded below and every B-stationary solution $(\bar u,\yb)$ is also S-stationary, i.e. there is some multiplier $w^\ast  \in \widehat N_{T_D(P(\zb))}(\nabla P(\zb)\bar u+\yb)$ such that
\begin{equation}\label{EqSStatAuxProbl}\nabla f(\zb)+\nabla P(\zb)^\ast  w^\ast  =0,\quad \yb+w^\ast  =0.\end{equation}
\end{lemma}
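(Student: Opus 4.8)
The plan is to establish the three assertions in the order stated. First, I would verify MSCQ for the system $\nabla P(\zb)u+y\in T_D(P(\zb))$ at an arbitrary feasible point $(\ub,\yb)$. Consider the linear map $L(u,y):=\nabla P(\zb)u+y$ and the set-valued map $\widetilde M(u,y):=L(u,y)-T_D(P(\zb))$. By hypothesis the mapping $u\rightrightarrows \nabla P(\zb)u-T_D(P(\zb))$ is metrically subregular at $(0,0)$, and since its graph is a closed cone, Lemma \ref{LemConicalMult} yields metric subregularity at every point $(\ub,0)$ of its graph, hence in particular at any feasible $\ub$ (recall $T_D(P(\zb))$ is a cone, so $0\in \nabla P(\zb)\ub - T_D(P(\zb))$ whenever $\nabla P(\zb)\ub\in T_D(P(\zb))$; for general feasible $(\ub,\yb)$ the point $\nabla P(\zb)\ub+\yb$ lies in the cone, which is what we need). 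The extra variable $y$ enters $L$ surjectively, so the subregularity of $u\rightrightarrows \nabla P(\zb)u - T_D(P(\zb))$ carries over to $\widetilde M$: given $(u,y)$ near $(\ub,\yb)$, one estimates $\dist{(u,y),\widetilde M^{-1}(0)}$ by first projecting $\nabla P(\zb)u+y$ onto $T_D(P(\zb))$ — absorbing the correction into the $y$-component at unit cost — which shows $\widetilde M$ is metrically subregular at $(\ub,\yb;0)$, i.e. MSCQ holds there. This is essentially a routine perturbation argument once Lemma \ref{LemConicalMult} is in hand.

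Next, boundedness below of \eqref{EqAuxProgr}. The cone $C:=T_D(P(\zb))$ satisfies $C+\Lsp(C)=C$. Decompose any feasible $(u,y)$: on the orthogonal complement of the relevant lineality directions the feasibility constraint $\nabla P(\zb)u+y\in C$ together with metric subregularity at $(0,0)$ forces a bound $\norm{u} \le \kappa\,\dist{0,\nabla P(\zb)u - C} \le \kappa\norm{y}$ modulo the subspace $\Lsp(C)\cap\nabla P(\zb)\R^d$ pulled back through $\nabla P(\zb)$ — but on that pulled-back subspace, B-stationarity of $\zb$ (equivalently $\skalp{\nabla f(\zb),\cdot}\ge 0$ on $T_\Omega(\zb)$, and by GGCQ on $\TlinO(\zb)$, hence on all of $\TlinOk{}$-type cones) shows $\skalp{\nabla f(\zb),\cdot}$ vanishes on directions $u$ with $\nabla P(\zb)u\in\Lsp(C)$ (since both $\pm$ such directions are linearized-feasible). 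Therefore $\skalp{\nabla f(\zb),u}\ge -\kappa\norm{\nabla f(\zb)}\,\norm{y}$ for feasible $(u,y)$, so the objective $\skalp{\nabla f(\zb),u}+\frac12\norm{y}^2 \ge -\kappa\norm{\nabla f(\zb)}\,\norm{y}+\frac12\norm{y}^2$, which is bounded below by $-\frac12\kappa^2\norm{\nabla f(\zb)}^2$. I expect this to be the main obstacle: the bookkeeping needed to separate the "lineality" part of $u$ (on which the linear term is free but harmless because it contributes zero) from the complementary part (on which $\norm{u}\lesssim\norm{y}$) has to be done carefully, and one must invoke B-stationarity to kill the linear term on the lineality directions.

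Finally, the S-stationarity of B-stationary solutions. Let $(\ub,\yb)$ be B-stationary for \eqref{EqAuxProgr}. The objective is smooth with gradient $(\nabla f(\zb),\yb)$, and the feasible set is $\{(u,y)\mv L(u,y)\in C\}$ where $L(u,y)=\nabla P(\zb)u+y$. Applying Theorem \ref{ThSuffS_Stat} to this problem: the constraint map $z\rightrightarrows L(z)-C$ is metrically subregular at the feasible point (just shown), and the qualification condition $L(\R^d\times\R^s)+\Lsp(C)=\R^s$ holds trivially because $L$ is already surjective (the $y$-block alone surjects onto $\R^s$). Hence Theorem \ref{ThSuffS_Stat} gives that B-stationarity implies S-stationarity, i.e. there is $w^\ast\in\widehat N_C(L(\ub,\yb))=\widehat N_{T_D(P(\zb))}(\nabla P(\zb)\ub+\yb)$ with $(\nabla f(\zb),\yb)+L^\ast w^\ast=0$. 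Writing out $L^\ast w^\ast=(\nabla P(\zb)^\ast w^\ast, w^\ast)$ splits this into $\nabla f(\zb)+\nabla P(\zb)^\ast w^\ast=0$ and $\yb+w^\ast=0$, which is exactly \eqref{EqSStatAuxProbl}. This step is immediate once the first two parts are secured.
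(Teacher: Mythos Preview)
Your MSCQ argument and the S-stationarity argument are both correct and essentially match the paper's proof; in fact the paper is even simpler for MSCQ, using only the $y$-absorption trick (if $v$ realizes $\dist{0,\widetilde M(u,y)}$ then $(u,y-v)\in\widetilde M^{-1}(0)$), so your preliminary invocation of Lemma~\ref{LemConicalMult} is superfluous there.

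For boundedness, however, your lineality-decomposition scheme has a gap. Metric subregularity of $u\rightrightarrows\nabla P(\zb)u-T_D(P(\zb))$ controls the distance from $u$ to the \emph{cone} $\TlinO(\zb)=\{u\mv\nabla P(\zb)u\in C\}$, not to the \emph{subspace} $L_0:=\{u\mv\nabla P(\zb)u\in\Lsp(C)\}$. In general $L_0\subsetneq\TlinO(\zb)$ (e.g.\ $C=\R_-$, $\Lsp(C)=\{0\}$), so your claimed bound $\norm{u}\le\kappa\norm{y}$ ``modulo $L_0$'' does not follow; a feasible $u$ can be arbitrarily large in $\TlinO(\zb)\setminus L_0$ with $y=0$. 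The paper's argument avoids this: given feasible $(u,y)$ one finds $\tilde u\in\TlinO(\zb)$ with $\norm{u-\tilde u}\le\kappa\norm{y}$ (this is exactly what subregularity plus Lemma~\ref{LemConicalMult} yields, since $\dist{\nabla P(\zb)u,C}\le\norm{y}$), and then uses B-stationarity together with GGCQ to get $\skalp{\nabla f(\zb),\tilde u}\ge0$ on \emph{all} of $\TlinO(\zb)$, not merely $\skalp{\nabla f(\zb),\cdot}=0$ on $L_0$. This immediately gives
\[
\skalp{\nabla f(\zb),u}+\tfrac12\norm{y}^2\ge\skalp{\nabla f(\zb),u-\tilde u}+\tfrac12\norm{y}^2\ge-\kappa\norm{\nabla f(\zb)}\norm{y}+\tfrac12\norm{y}^2,
\]
which is the same lower bound you write down at the end. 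So your final inequality is right, but the route you sketch to reach it does not work as stated; replace the lineality subspace by $\TlinO(\zb)$ and the vanishing of $\skalp{\nabla f(\zb),\cdot}$ by its nonnegativity, and the argument goes through.
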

\begin{proof}Consider the set-valued mapping $M(u,y):=\nabla P(\zb)u+y-T_D(P(\zb))$. Given any $(u,y)\in\R^d\times \R^s$ we can find $v\in M(u,y)$ such that $\norm{v}=\dist{0,M(u,y)}$ because $M(u,y)$ is closed. Then $0\in M(u,y-v)$ showing that
\[\dist{(u,y),M^{-1}(0)}\leq \norm{v}=\dist{0,M(u,y)}\]
and MSCQ for the system $\nabla P(\zb)u+y\in T_D(P(\zb))$ at every point $(\bar u,\yb)$ feasible for \eqref{EqAuxProgr} follows.
In order to show the boundedness of the program \eqref{EqAuxProgr} assume on the contrary that \eqref{EqAuxProgr} is unbounded below and consider a sequence $(u_k,y_k)$ with $\nabla P(\zb)u_k+y_k\in T_D(P(\zb))$ and $\skalp{\nabla f(\zb),u_k}+\frac 12\norm{y_k}^2\to -\infty$.
Since the mapping $u\rightrightarrows \nabla P(\zb)u-T_D(P(\zb))$ is assumed to be metrically subregular and its graph is a closed cone, by Lemma \ref{LemConicalMult}
 we can find another sequence $\tilde u_k$ with $\nabla P(\zb)\tilde u_k \in T_D(P(\zb))$ and
\[\norm{\tilde u_k-u_k}\leq\kappa\dist{\nabla P(\zb)u_k,T_D(P(\zb))}\leq \kappa\norm{y_k}.\]
 Because $\zb$ is B-stationary for the program \eqref{EqGenOptProbl} we have $\skalp{\nabla f(\zb),\tilde u_k}\geq 0$, implying
\[\skalp{\nabla f(\zb),u_k}+\frac 12\norm{y_k}^2\geq \skalp{\nabla f(\zb),u_k-\tilde u_k} +\frac 12\norm{y_k}^2\geq -\kappa\norm{\nabla f(\zb)}\norm{y_k}+\frac 12 \norm{y_k}^2\to -\infty,\]
which is obviously not possible. Hence, \eqref{EqAuxProgr} is bounded below. Finally, the last statement about S-stationarity of B-stationary solutions follows immediately from Theorem \ref{ThSuffS_Stat} applied to \eqref{EqAuxProgr}.
\end{proof}
\begin{lemma}\label{LemSolQP}Consider the program
\begin{equation}
\label{EqQP}  \min_{z\in\R^d}q(z):=\frac 12 z^T  Bz+b^T  z\quad\mbox{subject to}\quad Az\in C,
\end{equation}
where $B$ denotes a positive semidefinite $d\times d$-matrix,  $b\in\R^d$, $A$ is an $s\times d$ matrix and $C\subset \R^s$ is a polyhedral set.
Then exactly one of the following alternatives can occur:
\begin{enumerate}
  \item The program \eqref{EqQP} is infeasible
  \item The program \eqref{EqQP} is unbounded below, i.e. there is a sequence $z_k$ satisfying $Az_k\in C$ and $\lim_{k\to\infty} q(z_k)=-\infty$.
  \item There exists a global solution $\bar z$.
\end{enumerate}
\end{lemma}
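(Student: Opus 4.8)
Mutual exclusivity of the three alternatives is obvious, and if \eqref{EqQP} is infeasible or unbounded below there is nothing to prove; the content of the lemma is therefore that a feasible, bounded-below instance of \eqref{EqQP} possesses a global solution. The plan is first to reduce to the case where $C$ is convex polyhedral. Writing $C=\bigcup_{j=1}^N C_j$ with each $C_j$ convex polyhedral, the feasible set of \eqref{EqQP} is $\bigcup_{j=1}^N\Omega_j$ with $\Omega_j:=\{z\mv Az\in C_j\}$ again convex polyhedral; feasibility gives $\Omega_{j_0}\not=\emptyset$ for some $j_0$, and $q$ is bounded below on the union if and only if it is bounded below on each $\Omega_j$. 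Since there are finitely many pieces, it suffices to show that $q$ attains its infimum over every nonempty convex polyhedron on which it is bounded below: then $\min\{m_j\mv\Omega_j\not=\emptyset\}$, with $m_j$ the minimal value of $q$ on $\Omega_j$, is attained and yields the global solution. So from here on I assume $\Omega:=\{z\mv Az\in C\}$ is a nonempty convex polyhedron, and I fix a representation $\Omega=\{z\mv Ez\le e\}$ by finitely many linear inequalities.

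For this situation the statement is the classical Frank--Wolfe theorem for quadratic programs over polyhedra, and I would recover it by a Tikhonov regularization argument. For $\epsilon\in(0,1]$ set $q_\epsilon:=q+\epsilon\norm{\cdot}^2$; since $B$ is positive semidefinite, the Hessian $B+2\epsilon I$ of $q_\epsilon$ is positive definite, so $q_\epsilon$ is strongly convex and coercive on $\R^d$ and hence has a unique minimizer $z_\epsilon$ over the closed set $\Omega$. Fixing $z_0\in\Omega$ and putting $m:=\inf_{z\in\Omega}q(z)>-\infty$, from $q_\epsilon(z_\epsilon)\le q_\epsilon(z_0)$ one gets the uniform bound $q(z_\epsilon)\le q(z_0)+\norm{z_0}^2=:M$ for all $\epsilon\in(0,1]$. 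The key step — and the only point where polyhedrality of $\Omega$ is really used — is to show that $\{z_\epsilon\mv 0<\epsilon\le1\}$ is bounded. Granting this, pick $\epsilon_k\downarrow0$ with $z_{\epsilon_k}\to z^\ast$; then $z^\ast\in\Omega$, and letting $k\to\infty$ in $q(z_{\epsilon_k})+\epsilon_k\norm{z_{\epsilon_k}}^2\le q(z)+\epsilon_k\norm{z}^2$ (valid for all $z\in\Omega$) yields $q(z^\ast)\le q(z)$ for every $z\in\Omega$, i.e.\ $z^\ast$ solves \eqref{EqQP}.

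The main obstacle is thus the boundedness claim. I would argue by contradiction: suppose $\norm{z_{\epsilon_k}}\to\infty$ along some $\epsilon_k\downarrow0$ and pass to a subsequence so that $d_k:=z_{\epsilon_k}/\norm{z_{\epsilon_k}}\to d$ with $\norm d=1$. Dividing $Ez_{\epsilon_k}\le e$ by $\norm{z_{\epsilon_k}}$ gives $Ed\le0$, so $d$ is a direction of recession of $\Omega$ and $z_0+td\in\Omega$ for all $t\ge0$. The uniform bound $q(z_{\epsilon_k})\le M$, compared with $q(z_{\epsilon_k})=\tfrac12\norm{z_{\epsilon_k}}^2 d_k^TBd_k+\norm{z_{\epsilon_k}}b^Td_k$ and using $d_k^TBd_k\ge0$, is incompatible with $d^TBd=\lim_k d_k^TBd_k>0$; hence $d^TBd=0$ and, $B$ being positive semidefinite, $Bd=0$. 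Then $q(z_0+td)=q(z_0)+t\,b^Td$, which is bounded below on $[0,\infty)$ only if $b^Td\ge0$, while $q(z_{\epsilon_k})\ge b^Tz_{\epsilon_k}=\norm{z_{\epsilon_k}}b^Td_k$ together with $q(z_{\epsilon_k})\le M$ forces $b^Td\le0$; so $b^Td=0$, whence $q(z+td)=q(z)$ for all $z\in\R^d$, $t\in\R$, i.e.\ $q$ is constant in the direction $d$. Finally, for all large $k$ the point $z_{\epsilon_k}-sd$ with small $s>0$ stays in $\Omega$: the only rows that such a step could violate are those with $(Ed)_i<0$, and a row $i$ that is active at $z_{\epsilon_k}$ satisfies $(Ed_k)_i=e_i/\norm{z_{\epsilon_k}}\to0$, hence $(Ed)_i=0$, so the rows with $(Ed)_i<0$ are inactive at $z_{\epsilon_k}$ for all large $k$. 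Since $d^Tz_{\epsilon_k}=\norm{z_{\epsilon_k}}d_k^Td\to+\infty$, such a step strictly decreases $\norm{\cdot}^2$ while leaving $q$ unchanged, so $q_{\epsilon_k}(z_{\epsilon_k}-sd)<q_{\epsilon_k}(z_{\epsilon_k})$, contradicting the minimality of $z_{\epsilon_k}$. This contradiction proves the boundedness claim and completes the proof. (Alternatively, for the convex polyhedral case one may simply invoke the Frank--Wolfe theorem as a known result, or carry out an induction on $\dim\aff\Omega$ using the direction $d$ just produced.)
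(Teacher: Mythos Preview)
Your argument is correct. The reduction to convex polyhedral pieces is identical to the paper's proof, and so is the overall logic: show that each nonempty convex subproblem has a minimizer and then take the best one. Where you differ is in the treatment of the convex piece. The paper simply invokes \cite[Lemma~4]{BeGfr16d}: if a convex quadratic program over a convex polyhedron has no minimizer, there is a recession direction $w$ of the feasible set with $Bw=0$ and $b^Tw<0$, which immediately contradicts boundedness below of the full problem. You instead give a self-contained proof of the Frank--Wolfe theorem via Tikhonov regularization, extracting a bounded family of regularized minimizers and passing to the limit; your contradiction argument for boundedness also manufactures a recession direction $d$ with $Bd=0$, but you push further to $b^Td=0$ and then finish with a norm-reduction step. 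Both routes are standard; yours is longer but avoids the external citation, while the paper's is shorter because it outsources the convex case.
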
\begin{proof}
  It suffices to show that the program \eqref{EqQP} has a global solution if it is feasible and bounded below. Let $C$ be the union of the convex polyhedral sets $C_1,\ldots C_p$ and consider for each $i$ the convex quadratic program
  \[\min_z q(z)\quad\mbox{subject to}\quad Az\in C_i.\]
  If this program is feasible, then it must possess a global solution $\zb_i$, since otherwise by \cite[Lemma 4]{BeGfr16d} there would exist a direction $w$ satisfying $Aw\in 0^+C_i$ (the recession cone of $C_i$), $Bw=0$ and $b^T  w<0$ contradicting the boundedness of \eqref{EqQP}. Then the one of the $\zb_i$ who has the samllest objective function value is a global solution of \eqref{EqQP}.
\end{proof}

\begin{proof}[Proof of Proposition \ref{PropStrongFirstOrder}]
  Assuming that the first condition \eqref{EqKKT} of Proposition \ref{PropStrongFirstOrder} is not fulfilled we will  show that the second condition must be fulfilled. If the first condition is not fulfilled, then problem \eqref{EqAuxProgr} cannot have a global solution, because every global solution $(\bar u,\yb)$ would be B-stationary and therefore also fulfilling the S-stationary conditions \eqref{EqSStatAuxProbl} and consequently also the first condition \eqref{EqKKT} of Proposition \ref{PropStrongFirstOrder}. On the other hand, the program \eqref{EqAuxProgr} is bounded below and hence we can find a sequence $(u_k,y_k)$ satisfying
  $\nabla P(\zb)u_k+y_k\in T_D(P(\zb))$ $\forall k$ and
  \begin{equation}\label{EqMinSequ}\lim_{k\to\infty} \skalp{\nabla f(\zb),u_k}+\frac 12 \norm{y_k}^2=\gamma:=\inf\{\skalp{\nabla f(\zb),u}+\frac 12 \norm{y}^2\mv \nabla P(\zb)u+y\in T_D(P(\zb))\}.\end{equation}
 It follows that $\gamma<0$ and without loss of generality we can assume that $\skalp{\nabla f(\zb),u_k}<0$ for all $k$ implying $y_k\not=0$ by B-stationarity of $\zb$.
 Next we can assume without loss of generality that $u_k$ is the element $u$ with minimal norm fulfilling $\skalp{\nabla f(\zb),u}=\skalp{\nabla f(\zb),u_k}, \nabla P(\zb)u+y_k\in T_D(P(\zb))$.
 The  sequence $u_k$ must be unbounded because otherwise the sequence $y_k$ must be bounded as well and thus $(u_k,y_k)$ possesses some limit point $(\bar u,\yb)$  which would be a global solution  of \eqref{EqAuxProgr}.  Thus by passing to a subsequence we can assume that $\lim_k\norm{u_k}=\infty$ and that $u_k/\norm{u_k}$ converges to some $\bar u$.  From
 \[0=\limsup_{k\to\infty}\frac{\gamma}{\norm{u_k}^2}= \limsup_{k\to\infty}\Big(\frac{\skalp{\nabla f(\zb),u_k}}{\norm{u_k}^2}+\frac{\norm{y_k}^2}{2\norm{u_k}^2}\Big)=\limsup_{k\to\infty}\frac{\norm{y_k}^2}{2\norm{u_k}^2}\]
 we conclude $\norm{y_k}/\norm{u_k}\to 0$. Hence
\begin{eqnarray*}&&\skalp{\nabla f(\zb),\bar u}=\lim_{k\to\infty}\frac{\skalp{\nabla f(\zb),u_k}}{\norm{u_k}}\leq0,\\
&&\nabla P(\zb)\bar u=\lim_{k\to \infty}\frac 1{\norm{u_k}}\big(\nabla P(\zb)u_k+y_k)\in T_D(P(\zb)),
\end{eqnarray*}
implying $\bar u\in\TlinO(\zb)$.
Since $\zb$ is B-stationary for \eqref{EqGenOptProbl} it follows from GGCQ that $\skalp{\nabla f(\zb),\bar u}=0$ and that $\bar u$ is a global solution of the program
\[\min_u \skalp{\nabla f(\zb),u}\quad \mbox{subject to}\quad u\in \TlinO(\zb).\]
Hence the B-stationarity condition \eqref{EqKKTNormal} follows. Next we show \eqref{EqNotZero} by contraposition. Assuming that $\nabla P(\zb)\ub\in\Lsp(T_D(P(\zb)))$, we have
$\skalp{\nabla f(\zb),u_k-\norm{u_k}\ub}=\skalp{\nabla f(\zb),u_k}$ and $\nabla P(\zb)(u_k-\norm{u_k}\ub)+y_k\in T_D(P(\zb))$. Since $\norm{(u_k-\norm{u_k}\ub)}=\norm{u_k}\norm{\frac{u_k}{\norm{u_k}}-\ub}<\norm{u_k}$ for  $k$ sufficiently large, we get a contradiction to our choice of $u_k$ and therefore $\nabla P(\zb)\ub\not\in\Lsp(T_D(P(\zb)))$.

There remains to show that $T_D(P(\zb))$ is not locally polyhedral near $\nabla P(\zb)\bar u$. Assuming on the contrary that  $T_D(P(\zb))$ is locally polyhedral near $\nabla P(\zb)\bar u$, we can find a polyhedral set $C$ and a neighborhood $W$ of $\nabla P(\zb)\bar u$ such that $T_D(P(\zb))\cap W=C\cap W$. We can choose the neighborhood $W$ as a convex polyhedral set, e.g. as a sufficiently small ball around $\nabla P(\zb)\bar u$ with respect to the maximum norm. Hence we can assume that $C\cap W$ is polyhedral and is the union of the convex polyhedral sets $C_1,\ldots,C_q$ having the representations $C_i=\{w\mv \skalp{a_{ij},w}\leq\alpha_{ij}, j=1,\ldots,p_i\}$. Consider the set
\[\bigcup_{\beta\geq 1}\beta C_i=\pi(\{(w,\beta)\mv \skalp{a_{ij},w}-\beta\alpha_{ij}\leq 0, j=1,\ldots,p_i,\ \beta\geq 1\}),\]
where $\pi(w,\beta):=w$. By \cite[Theorem 19.3]{Ro70} this set is a convex polyhedral set, implying that the set
\[\bigcup_{\beta\geq 1}\beta(T_D(P(\zb))\cap W)=\bigcup_{\beta\geq 1}\beta(C\cap W)=\bigcup_{i=1}^p\bigcup_{\beta\geq 1} \beta C_i\]
is polyhedral. Consider the optimization problem
\begin{equation}\label{EqAuxOptProbl1}\min_{u,y}\skalp{\nabla f(\zb),u}+\frac 12 \norm{y}^2\quad\mbox{subject to}\quad\nabla P(\zb)u+y\in \bigcup_{\beta\geq 1}\beta(T_D(P(\zb))\cap W).\end{equation}
Since $\bigcup_{\beta\geq 1}\beta(T_D(P(\zb))\cap W)\subset \bigcup_{\beta\geq 1}\beta T _D(P(\zb)))=T_D(P(\zb))$, we conclude from Lemma \ref{LemAuxProbl} that the problem \eqref{EqAuxOptProbl1} is bounded below and thus by Lemma \ref{LemSolQP}  it possesses a global solution $(\tilde u,\tilde y)$. By the construction of $\bar u$ we have $(\nabla P(\zb)u_k+y_k)/\norm{u_k}\in C\cap W$ for all $k$ sufficiently large  and thus $(\nabla P(\zb)u_k+y_k)\in \bigcup_{\beta\geq 1}\beta(T_D(P(\zb))\cap W)$. This shows $\skalp{\nabla f(\zb),\tilde u}+\frac 12 \norm{\tilde y}^2\leq \skalp{\nabla f(\zb),u_k}+\frac 12 \norm{y_k}^2$ and from \eqref{EqMinSequ} we obtain that $(\tilde u,\tilde y)$ is a global solution of \eqref{EqAuxProgr}, a contradiction. Therefore, $T_D(P(\zb))$ is not locally polyhedral near $\nabla P(\zb)\bar u$ and this completes the proof.
\end{proof}
For the sake of completeness we state also the following extension of Proposition \ref{PropStrongFirstOrder}, which exploits some additional features in case of problems of the form \eqref{EqGenOptProbl1}. Rewriting this problem in the form \eqref{EqGenOptProbl}, the set $D$ is the graph of $Q$ and then the tangent cone to $D$ is the graph of another multifunction, the so-called graphical derivative.
\begin{proposition}\label{PropStrongFirstOrderGraph}
  In addition to the  assumptions of Theorem \ref{ThStrongFirstOrder} assume that $T_D(P(\zb))$ is the graph of a set-valued mapping $M=M_c+M_p$, where $M_c,M_p:\R^r\rightrightarrows \R^{s-r}$ are set-valued mappings whose graphs are closed cones, $M_p$ is polyhedral and there is some real $C$ such that
  \begin{equation}\label{EqBoundM_C} \norm{t}\leq C \norm{w}\ \forall (w,t)\in\Gr M_c.\end{equation}
  Then either there is $w\in T_D(P(\zb))$  and a multiplier  $w^\ast  \in\widehat N_{T_D(P(\zb))}(w)$ fulfilling \eqref{EqKKT} or there is some $\bar u\in \TlinO(\bar z)$ fulfilling \eqref{EqNotZero},\eqref{EqKKTCritDir} and \eqref{EqKKTNormal} such that $T_D(P(\zb))$ is not locally polyhedral near $\nabla P(\zb)\bar u$ and there is some $\bar w\not=0$ with
  \begin{equation}
    \label{EqKKT_W_NotZero} \nabla P(\zb)\bar u\in\{\bar w\}\times M(\bar w).
  \end{equation}
\end{proposition}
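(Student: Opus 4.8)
The plan is to run the proof of Proposition~\ref{PropStrongFirstOrder} essentially without change and then to read off the extra conclusion \eqref{EqKKT_W_NotZero} by tracking how the minimizing sequence constructed there splits according to $M=M_c+M_p$. So suppose that alternative \eqref{EqKKT} fails. The proof of Proposition~\ref{PropStrongFirstOrder} furnishes a sequence $(u_k,y_k)$ with $\nabla P(\zb)u_k+y_k\in T_D(P(\zb))=\Gr M$, $\skalp{\nabla f(\zb),u_k}+\frac12\norm{y_k}^2\to\gamma<0$, $\norm{u_k}\to\infty$, $u_k/\norm{u_k}\to\ub$ and $\norm{y_k}/\norm{u_k}\to0$, with $u_k$ of minimal norm among the $u$ satisfying $\skalp{\nabla f(\zb),u}=\skalp{\nabla f(\zb),u_k}$ and $\nabla P(\zb)u+y_k\in\Gr M$; and it establishes that $\ub$ fulfils \eqref{EqNotZero}, \eqref{EqKKTCritDir} and \eqref{EqKKTNormal} and that $T_D(P(\zb))$ is not locally polyhedral near $\nabla P(\zb)\ub$.

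Split $\R^s=\R^r\times\R^{s-r}$ and write $\nabla P(\zb)u_k+y_k=(w_k,s_k)$ in this splitting; choose a decomposition $s_k=t_k^c+t_k^p$ with $t_k^c\in M_c(w_k)$ and $t_k^p\in M_p(w_k)$. By \eqref{EqBoundM_C} we have $\norm{t_k^c}\le C\norm{w_k}$, so $t_k^c/\norm{u_k}$ is bounded and, after passing to a subsequence, $t_k^c/\norm{u_k}\to\vb$; writing $\nabla P(\zb)\ub=(\wb,\tau)=\lim_k(w_k,s_k)/\norm{u_k}$ we obtain $\norm{\vb}\le C\norm{\wb}$. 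Since $\Gr M_c$ and $\Gr M_p$ are closed, taking limits in $(w_k,t_k^c)/\norm{u_k}\in\Gr M_c$ and $(w_k,t_k^p)/\norm{u_k}\in\Gr M_p$ yields $\vb\in M_c(\wb)$ and $\tau-\vb\in M_p(\wb)$, hence $\tau\in M_c(\wb)+M_p(\wb)=M(\wb)$, i.e.\ $\nabla P(\zb)\ub\in\{\wb\}\times M(\wb)$. Thus the whole statement holds as soon as $\wb\ne0$ is shown.

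Assume $\wb=0$. Then $\vb\in M_c(0)$, and since \eqref{EqBoundM_C} forces $M_c(0)=\{0\}$ we get $\vb=0$; consequently $\tau\in M_p(0)$, and $\tau\ne0$ because $\nabla P(\zb)\ub=(0,\tau)\ne0$ by \eqref{EqNotZero}. To derive a contradiction I would imitate the part of the proof of Proposition~\ref{PropStrongFirstOrder} that excludes local polyhedrality, but with $\Gr M$ replaced by the polyhedral cone $\Gr M_p$: setting $\hat y_k:=y_k-(0,t_k^c)$ gives $\nabla P(\zb)u_k+\hat y_k=(w_k,t_k^p)\in\Gr M_p$ with $(w_k,t_k^p)/\norm{u_k}\to(0,\tau)$, so for large $k$ these points lie in a convex polyhedral neighbourhood $W$ of $(0,\tau)$ and $\bigcup_{\beta\ge1}\beta(\Gr M_p\cap W)$ is polyhedral by \cite[Theorem~19.3]{Ro70}; Lemma~\ref{LemSolQP} then supplies a global solution of $\min\skalp{\nabla f(\zb),u}+\frac12\norm{y}^2$ over this polyhedral set, which --- after the substitution is reversed --- would be a global solution of \eqref{EqAuxProgr}, contradicting the fact that \eqref{EqAuxProgr} has none. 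The crucial point is that this program over $\Gr M_p$ is still bounded below with infimum $\gamma$: along the modified sequence $\norm{\hat y_k}^2-\norm{y_k}^2$ is governed by $\norm{t_k^c}\le C\norm{w_k}$, which is negligible because $w_k/\norm{u_k}\to0$.

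The main obstacle is exactly this last step in the case $\wb=0$. One cannot simply claim that $T_D(P(\zb))$ is locally polyhedral near $(0,\tau)$, because $\Gr(M_c+M_p)$ may fail to be locally polyhedral there even when $\tau\in M_p(0)$; so the polyhedral reduction has to be performed at the $M_p$-level and the contribution of $M_c$ absorbed using the calmness-type estimate \eqref{EqBoundM_C}, and turning this heuristic into a proof (obtaining a bound on $\norm{t_k^c}$ sharp enough to conclude, not merely $\norm{t_k^c}=o(\norm{u_k})$) is the delicate part. Should that estimate prove insufficient, an alternative is to keep $\ub$ from Proposition~\ref{PropStrongFirstOrder} and perturb it, within the structure of $T_D(P(\zb))$ around $(0,\tau)$ which is ``polyhedral up to $M_c$'', to a nearby critical direction whose $\nabla P(\zb)$-image has nonzero first block while still sitting at a non-locally-polyhedral point.
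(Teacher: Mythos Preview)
Your argument is complete and correct up to the point where you establish $\nabla P(\zb)\ub\in\{\wb\}\times M(\wb)$; the issue is the proof that $\wb\ne0$, and you already flag this yourself. The heuristic you propose does not close: the auxiliary quadratic program over $\bigcup_{\beta\ge1}\beta(\Gr M_p\cap W)$ lives in $\Gr M_p$, which in general is \emph{not} contained in $\Gr M=T_D(P(\zb))$, so Lemma~\ref{LemAuxProbl} gives no lower bound for it, and even if a minimizer $(\tilde u,\tilde y)$ existed, ``reversing the substitution'' by adding back some $\tilde t_c\in M_c(\tilde w)$ changes $\frac12\norm{y}^2$ by an amount you cannot control (there is no a~priori bound on $\tilde w$). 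Your estimate $\norm{t_k^c}\le C\norm{w_k}=o(\norm{u_k})$ only controls the \emph{incoming} sequence, not the minimizer you would get out of Lemma~\ref{LemSolQP}; in particular you cannot conclude that the polyhedral program has infimum $\gamma$, nor that its minimizer is feasible for \eqref{EqAuxProgr} with the same objective value.

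The paper avoids this trap by never passing to $\Gr M_p$. Instead, for each $k$ it freezes the first component $w_k$ and the $M_c$-part $t_k\in M_c(w_k)$ of $\nabla P(\zb)u_k+y_k$ and optimizes over the \emph{fiber}
\[
\min_{u,y}\ \skalp{\nabla f(\zb),u}+\tfrac12\norm{y}^2\quad\text{s.t.}\quad \nabla P(\zb)u+y\in\{w_k\}\times\big(t_k+M_p(w_k)\big),
\]
which \emph{is} a subset of $\Gr M$, so boundedness below is inherited from Lemma~\ref{LemAuxProbl} and any solution is feasible for \eqref{EqAuxProgr}. Because $M_p$ is polyhedral, this is a convex polyhedral QP; writing out its KKT system as a linear system with right-hand side depending affinely on $(w_k,t_k)$ and applying Hoffman's error bound (uniformly over the finitely many active-index patterns) yields a solution $(\tilde u_k,\tilde y_k)$ with $\norm{(\tilde u_k,\tilde y_k)}\le\gamma_1+\gamma_2(\norm{w_k}+\norm{t_k})\le\gamma_1+\gamma_2(1+C)\norm{w_k}$. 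This new sequence is still minimizing (it does at least as well as $(u_k,y_k)$), so the argument of Proposition~\ref{PropStrongFirstOrder} applies to it and produces a $\ub$ with all the required properties; dividing the norm bound by $\norm{\tilde u_k}$ and passing to the limit gives $1\le\gamma_2(1+C)\norm{\wb}$, hence $\wb\ne0$. The missing idea in your attempt is precisely this fiber-wise reduction together with the uniform Hoffman bound, which converts the qualitative estimate \eqref{EqBoundM_C} into a quantitative bound linking $\norm{\tilde u_k}$ and $\norm{w_k}$.
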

\begin{proof}
  We only have to show \eqref{EqKKT_W_NotZero} and we can proceed quite similar as in the proof of Proposition \ref{PropStrongFirstOrder}. Assuming that we cannot fulfill \eqref{EqKKT}, let $(u_k,y_k)$ denote a sequence satisfying
  $\nabla P(\zb)u_k+y_k\in T_D(P(\zb))$ and \eqref{EqMinSequ}. Let $w_k$ and $t_k\in M_c(w_k)$ be given by $\nabla P(\zb)u_k+y_k\in \{w_k\}\times (t_k+ M_p(w_k))$ and consider for each $k$ the problem
  \begin{equation}\label{EqAuxProgr_k}\min \skalp{\nabla f(\zb),u}+\frac 12\norm{y}^2\ \mbox{subject to}\ \nabla P(\zb)u+y\in (w_k,t_k+M_p(w_k)).\end{equation}
  Since $M_p(w_k)$ is a polyhedral set, by Lemma \ref{LemSolQP} this problem has a global solution and we now claim that  there is also a global solution $(\tilde u_k,\tilde y_k)$ fulfilling
  \[ \norm{(\tilde u_k,\tilde y_k)}\leq \gamma_1+\gamma_2(\norm{t_k}+\norm{w_k}),\]
  where $\gamma_1,\gamma_2$ do not depend on $k$. Indeed, let $\Gr M_p$ be the union of the convex polyhedral sets $C_i$, $i=1,\ldots,p$ with representation
  \[C_i=\{(w,t_p)\mv \skalp{a_{ij},w}+\skalp{b_{ij},t_p}\leq \alpha_{ij},\ j=1,\ldots,q_i\}\]
  and consider for each $i$ and each index set $J\subset \{1,\ldots,q_i\}$ the  set $S(i,J,w_k,t_c)$ consisting of all $(u,y,t_p,\mu_1,\mu_2,\lambda)\in\R^d\times\R^s\times\R^{s-r} \times\R^r\times\R^{s-r}\times \R^{q_i}$ satisfying the system of linear equalities and linear inequalities
  \begin{eqnarray}\label{EqSubKKT1}&&\nabla P(\zb)^\ast  \left(\begin{array}{c}\mu_1\\\mu_2   \end{array}\right)=-\nabla f(\zb),\ y+\left(\begin{array}{c}\mu_1\\\mu_2   \end{array}\right)=0\\
    \label{EqSubKKT2}&&-\mu_2+\sum_{j\in J}\lambda_i b_{ij}=0, \lambda_j\geq 0,\ j\in J, \lambda_j=0,\ j\in\{1,\ldots,q_i\}\setminus J\\
    \label{EqSubKKT3}&&\nabla P(\zb)u+y -(0,t_p)=(w_k,t_c)\\
    \label{EqSubKKT4}&&\skalp{b_{ij},t_p}\begin{cases}
      =\alpha_{ij}-\skalp{a_{ij},w_k}&\mbox{if $j\in J$},\\
      \leq\alpha_{ij}-\skalp{a_{ij},w_k}&\mbox{if $j\not\in J$}.
    \end{cases}
  \end{eqnarray}
  By Hoffman's error bound there is some constant $\gamma^{i,J}$ such that
  \[\dist{0, S(i,J,w_k,t_c)}\leq \gamma^{i,J}(\norm{\nabla f(\xb)}+\norm{w_k}+\norm{t_c}+\sum_{j=1}^{q_i}\vert \alpha_{ij}-\skalp{a_{ij},w_k}\vert\]
  whenever $S(i,J,w_k,t_c)\not=\emptyset$. Note that for every $(u,y,t_p,\mu_1,\mu_2,\lambda)\in S(i,J,w_k,t_c)$ the triple $(u,y, t_p)$ is a global solution of the convex quadratic program
  \begin{equation}\label{EqSubQP}\min \skalp{\nabla f(\zb),u}+\frac 12\norm{y}^2\ \mbox{subject to}\ \nabla P(\zb)u+y-(0,t_p)= (w_k,t_c), (w_k,t_p)\in C_i\end{equation}
  because the equations \eqref{EqSubKKT1}-\eqref{EqSubKKT4} constitute the Karush-Kuhn-Tucker conditions for this problem. Conversely, for every solution of $(u,y, t_p)$ of this program there must exist multipliers $(\mu_1,\mu_2,\lambda)$ such that $(u,y,t_p,\mu_1,\mu_2,\lambda)$ fulfills the Karush-Kuhn-Tucker conditions and thus $(u,y,t_p,\mu_1,\mu_2,\lambda)\in S(i,J,w_k,t_c)$ with $J:=\{j\mv \lambda_j>0\}$.

  Now let $(u,y)$ denote a global solution of \eqref{EqAuxProgr_k} and let $t_p\in M_p(w_k)$ be given by $\nabla P(\zb)u+y-(0,t_p)= (w_k,t_c)$. Consider $i$ such that $(w_k,t_p)\in C_i$. Then
  the triple $(u,y,t_p)$ is a global solution of \eqref{EqSubQP} and we can find some index set $J$ such that $S(i,J,w_k,t_c)\not=\emptyset$. Obviously this set is closed and thus  we can find $(\tilde u,\tilde y,\tilde t_p,\tilde \mu_1,\tilde \mu_2)\in S(i,J,w_k,t_c)$ such that $\norm{(\tilde u,\tilde y,\tilde t_p,\tilde \mu_1,\tilde \mu_2)}=\dist{0,S(i,J,w_k,t_c)}$, implying
 \begin{eqnarray*}\norm{(\tilde u,\tilde y)}&\leq& \norm{(\tilde u,\tilde y,\tilde t_p,\tilde \mu_1,\tilde \mu_2)}\leq \gamma^{i,J}(\norm{\nabla f(\xb)}+\norm{w_k}+\norm{t_c}+\sum_{j=1}^{q_i}\vert \alpha_{ij}-\skalp{a_{ij},w_k}\vert)\\
 &\leq&
  \gamma^{i,J}(\norm{\nabla f(\xb)}+\sum_{j=1}^{q_i}\vert \alpha_{ij}\vert)+\gamma^{i,J}(\norm{t_c}+(1+\sum_{j=1}^{q_i}\norm{a_{ij}})\norm{w_k}.
  \end{eqnarray*}
 Since both $(\tilde u,\tilde y,\tilde t_p)$ and $(u,y,t_p)$ constitute  global solutions of \eqref{EqSubQP} and $(u,y)$ is a global solution of \eqref{EqAuxProgr_k}, $(\tilde u,\tilde y)$ is a global solution of \eqref{EqAuxProgr_k} and our claim follows with
  $(\tilde u_k,\tilde y_k)=(\tilde u,\tilde y)$ and
  \[\gamma_1=\max_{i,J}\gamma^{i,J}(\norm{\nabla f(\xb)}+\sum_{j=1}^{q_i}\vert \alpha_{ij}\vert),\ \gamma_2=\max_{i,J}\gamma^{i,J}(1+\sum_{j=1}^{q_i}\norm{a_{ij}}).\]
  Together with \eqref{EqBoundM_C} we obtain
  \begin{equation}\label{EqBndW_kU_k}\norm{(\tilde u_k,\tilde y_k)}\leq \gamma_1+\gamma_2(1+C)\norm{w_k}.\end{equation}
  Since $(u_k,y_k)$ is feasible for the problem \eqref{EqAuxProgr_k}, we have $\skalp{\nabla f(\zb),\tilde u_k}+\frac 12 \norm{\tilde y_k}^2\leq \skalp{\nabla f(\zb), u_k}+\frac 12 \norm{ y_k}^2$ and thus  $(\tilde u_k,\tilde y_k)$ is another sequence fulfilling \eqref{EqMinSequ}. We can proceed as in the proof of Proposition \ref{PropStrongFirstOrder} to show that, after passing to a subsequence, the sequence $\tilde u_k/\norm{\tilde u_k}$ converges to some $\bar u\in\TlinO(\zb)$ fulfilling \eqref{EqNotZero},\eqref{EqKKTCritDir} and \eqref{EqKKTNormal} and $T_D(P(\zb))$ is not locally polyhedral near $\nabla P(\zb)\bar u$. Because $\nabla P(\zb)\bar u=\lim_{k\to\infty}(\nabla P(\zb)\tilde u_k+\tilde y_k)/\norm{\tilde u_k}$ and
  \[(\nabla P(\zb)\tilde u_k+\tilde y_k)/\norm{\tilde u_k}\in\frac 1{\norm{\tilde u_k}}\Big(\{w_k\}\times M(w_k)\Big)=\{\frac{w_k}{\norm{\tilde u_k}}\}\times M\Big(\frac{w_k}{\norm{\tilde u_k}}\Big)\]
  we conclude that $\frac{w_k}{\norm{\tilde u_k}}$ converges to some $\bar w$ such that $\nabla P(\zb)\bar u\in\{\bar w\}\times M(\bar w)$. From \eqref{EqBndW_kU_k} we obtain $1\leq \gamma_2(1+C)\norm{\bar w}$ implying $\norm{\bar w}>0$. This completes the proof.
\end{proof}

\section{Application to MPEC\label{SecMPEC}}
In this section we want to demonstrate that the linearized M-stationarity conditions can be applied to the MPEC \eqref{EqMPEC'} when it is impossible to compute the limiting normal cone effectively. Recall that this program is given by
\begin{align*}
\mbox{(MPEC')}\qquad \min_{x,y}\ & F(x,y)\\
\nonumber  \mbox{s.t. }&\hat P(x,y):=\left(\begin{array}{c}(y,-\phi(x,y))\\ G(x,y)\end{array}\right)\in\Gr\widehat N_\Gamma\times\R^p_-=:\hat D,
\end{align*}
where $F:\R^n\times\R^m\to \R$, $\phi:\R^n\times\R^m\to \R^m$ and $G:\R^n\times\R^m\to\R^p$ are continuously differentiable and $\Gamma:=\{y\mv g(y)\leq 0\}$ is given by a $C^2$-mapping $g:\R^m\to\R^q$.

For the rest of the section let $(\xb,\yb)$ denote a B-stationary solution for the program (MPEC') such that  the following assumption is fulfilled:
\begin{assumption}\label{Ass1}
\begin{enumerate}\item MSCQ holds for the lower level system $g(y)\in\R^q_-$ at $\yb$.
\item GGCQ holds at $(\xb,\yb)$ and the mapping
\begin{eqnarray*}(u,v)&\rightrightarrows& \nabla \hat P(\xb,\yb)(u,v)-T_{\hat D}(\hat P(\xb,\yb))\end{eqnarray*}
is metrically subregular at $((0,0),0)$.
\end{enumerate}
\end{assumption}
Note that by Remark \ref{RemSubReg} the second part of Assumption \ref{Ass1} is fulfilled if MSCQ holds for the system $\hat P(x,y)\in \hat D$ at $(\xb,\yb)$. A point-based sufficient condition for the validity of MSCQ for this system is given by \cite[Theorem 5]{GfrYe17a}.

We need some more notation. We set $\yba:=-\phi(\xb,\yb)$ and denote by
\[\KbG:=\K_\Gamma(\yb,\yba)\]
the  critical cone for $\Gamma$ at $(\yb,\yba)$. Further we define the {\em multiplier set}
\[\Lb:=\{\lambda\in N_{\R^q_-}(g(\yb))\mv \nabla g(\yb)^\ast  \lambda=\yba\}\]
and for every $v\in \KbG$ the {\em directional multiplier set}
\[\Lbv:=\argmax\{v^T  \nabla^2(\lambda^T  g)(\yb)v\mv \lambda\in \Lb\}.\]
By \cite[Proposition 4.3(iii)]{GfrMo15a} we have $\Lbv\not=\emptyset$ $\forall v\in\KbG$ thanks to Assumption \ref{Ass1}(1).

By \cite[Proposition 1]{GfrYe17a} we have
\[T_{\hat D}(\hat P(\xb,\yb))=T_{\Gr \widehat N_\Gamma}(\yb,\yba)\times T_{\R^p_-}(G(\xb,\yb)).\]
In order to compute the tangent cone $T_{\Gr \widehat N_\Gamma}(\yb,\yba)$ we use the following theorem:
\begin{theorem}[{cf. \cite[Theorem 4]{GfrYe17a}}]\label{ThTanConeGrNormalCone}Assume that MSCQ holds  at $\yb$ for
the system $g(y)\in\R^q_-$. Then  there is a real $\kappa>0$ such that
 the tangent cone to the graph of $\widehat N_\Gamma$ at $(\yb,\yba)$ can be calculated by
\begin{eqnarray}\label{EqTanConeGrNormalCone}
\lefteqn{T_{\Gr \widehat N_\Gamma}(\yb,\yba)}\\
\nonumber
&=&\big\{(v,v^\ast  )\in\R^{2m}\mv\exists\,\lambda\in\Lbv\;\mbox{ with }\;
v^\ast  \in\nabla^2(\lambda^T  g)(\yb)v+N_{\KbG}(v)\big\}\\
\nonumber&=&\big\{(v,v^\ast  )\in\R^{2m}\mv\exists\,\lambda\in\Lbv\cap \kappa\norm{\yba} \B_{\R^q}\;\mbox{ with }\;
v^\ast  \in\nabla^2(\lambda^T  g)(\yb)v+N_{\KbG}(v)\big\}.
\end{eqnarray}
\end{theorem}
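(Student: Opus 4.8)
The plan is to prove \eqref{EqTanConeGrNormalCone} through the chain of inclusions $S_1\subseteq T_{\Gr\widehat N_\Gamma}(\yb,\yba)\subseteq S_2$, where $S_1$ and $S_2$ denote the first and the second set on the right-hand side; since $S_2\subseteq S_1$ is immediate, equality holds throughout. Two preliminary facts are needed. First, whenever MSCQ (hence GACQ) holds for $g(y)\in\R^q_-$ at a point $y\in\Gamma$, Farkas-dualization of the polyhedral linearized tangent cone gives $\widehat N_\Gamma(y)=\{\nabla g(y)^\ast\lambda\mv\lambda\in N_{\R^q_-}(g(y))\}$; the inclusion ``$\supseteq$'' holds at every $y\in\Gamma$ and needs no qualification. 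Second, the quantitative content of MSCQ at $\yb$ should yield a neighborhood $U$ of $\yb$ and $\kappa>0$ such that for every $y\in\Gamma\cap U$ and every $y^\ast\in\widehat N_\Gamma(y)$ there is $\lambda\in N_{\R^q_-}(g(y))$ with $\nabla g(y)^\ast\lambda=y^\ast$ and $\norm{\lambda}\leq\kappa\norm{y^\ast}$. This uniform bounded-multiplier representation of $\widehat N_\Gamma$ near $\yb$ is the technical heart -- it is what produces the norm bound in the second line of \eqref{EqTanConeGrNormalCone} -- and deriving it from metric subregularity of $y\rightrightarrows g(y)-\R^q_-$ (via a Hoffman/Robinson-type perturbation estimate, with care at points where GACQ could a priori fail) is the main obstacle.

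Granting this, the inclusion $T_{\Gr\widehat N_\Gamma}(\yb,\yba)\subseteq S_2$ runs as follows. Let $(v,v^\ast)$ be tangent, with $t_k\downarrow0$, $(v_k,v_k^\ast)\to(v,v^\ast)$ and $\yba+t_kv_k^\ast\in\widehat N_\Gamma(\yb+t_kv_k)$. Pick $\lambda_k\in N_{\R^q_-}(g(\yb+t_kv_k))$ with $\nabla g(\yb+t_kv_k)^\ast\lambda_k=\yba+t_kv_k^\ast$ and $\norm{\lambda_k}\leq\kappa\norm{\yba+t_kv_k^\ast}$; after a subsequence $\lambda_k\to\bar\lambda$, and outer semicontinuity of $N_{\R^q_-}$ together with $\nabla g(\yb)^\ast\bar\lambda=\yba$ give $\bar\lambda\in\Lb$ with $\norm{\bar\lambda}\leq\kappa\norm{\yba}$. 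Feasibility $g(\yb+t_kv_k)\leq0$ forces $v\in T_\Gamma(\yb)$, and a first-order expansion of $\lambda_k^Tg(\yb+t_kv_k)=0$ forces $\skalp{\yba,v}=0$, so $v\in\KbG$. That $\bar\lambda\in\Lbv$ follows by comparing, for arbitrary $\mu\in\Lb$, the inequality $\mu^Tg(\yb+t_kv_k)\leq0$ with the equality $\bar\lambda^Tg(\yb+t_kv_k)=0$: since $\mu,\bar\lambda\in\Lb$ the first-order terms cancel and a second-order Taylor expansion, divided by $t_k^2/2$, yields $v^T\nabla^2(\mu^Tg)(\yb)v\leq v^T\nabla^2(\bar\lambda^Tg)(\yb)v$ in the limit. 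Finally, using $\yba=\nabla g(\yb)^\ast\bar\lambda$ and a second-order expansion,
\[v_k^\ast=\nabla^2(\bar\lambda^Tg)(\yb)v_k+\frac{\nabla g(\yb+t_kv_k)^\ast(\lambda_k-\bar\lambda)}{t_k}+o(1),\]
so the middle term converges to $\eta:=v^\ast-\nabla^2(\bar\lambda^Tg)(\yb)v$; since $\lambda_k$ is complementary to $g(\yb+t_kv_k)$ and $\lambda_k-\bar\lambda\to0$, only the critically active components survive and (controlling the representing coefficients by Hoffman's bound) $\eta\in N_{\KbG}(v)$, so $(v,v^\ast)\in S_2$ with multiplier $\bar\lambda$.

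The reverse inclusion $S_1\subseteq T_{\Gr\widehat N_\Gamma}(\yb,\yba)$ does not need the bounded-multiplier property: given $(v,v^\ast)\in S_1$, say $v^\ast=\nabla^2(\lambda^Tg)(\yb)v+\eta$ with $\lambda\in\Lbv$ and $\eta\in N_{\KbG}(v)$, it suffices to construct $t_k\downarrow0$, $v_k\to v$ and $\lambda_k\in N_{\R^q_-}(g(\yb+t_kv_k))$ with $(\lambda_k-\lambda)/t_k\to\rho$ for some $\rho$ with $\nabla g(\yb)^\ast\rho=\eta$; then $v_k^\ast:=(\nabla g(\yb+t_kv_k)^\ast\lambda_k-\yba)/t_k\to v^\ast$ by Taylor, while $\yba+t_kv_k^\ast=\nabla g(\yb+t_kv_k)^\ast\lambda_k\in\widehat N_\Gamma(\yb+t_kv_k)$ by the always-valid inclusion above. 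Writing $\eta$ through the critically active gradients of $g$ with the sign pattern dictated by $N_{\KbG}(v)$, one chooses $v_k$ so that $\yb+t_kv_k\in\Gamma$ (projecting $\yb+t_kv$ onto the relevant face, with $o(t_k)$ correction controlled by metric subregularity) and $\lambda_k=\lambda+t_k\rho+o(t_k)$ feasible -- nonnegative and complementary to $g(\yb+t_kv_k)$. Keeping $\lambda_k$ exactly nonnegative and exactly complementary while still matching $\rho$ is the delicate point, since the free-sign components of $\rho$ may endanger $\lambda_k\geq0$; I would resolve this by first treating the case where $\lambda$ is in the relative interior of $\Lbv$ with a compatible representation of $\eta$, and then passing to the limit, using that $\Lbv$ and $N_{\KbG}(v)$ are polyhedral. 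Apart from the two junctures -- the uniform bounded-multiplier property and the feasible-curve construction -- the argument is bookkeeping with Taylor's theorem, outer semicontinuity of polyhedral normal-cone maps, and Hoffman's error bound.
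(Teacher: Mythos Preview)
The paper does not prove this theorem: it is quoted verbatim from \cite[Theorem 4]{GfrYe17a} and used as a black box, so there is no ``paper's own proof'' to compare against. Your outline is the standard route to such a result and captures the right ingredients (bounded-multiplier representation of $\widehat N_\Gamma$ near $\yb$ under MSCQ, second-order Taylor expansions, polyhedral structure of $\KbG$), but two points deserve care. First, in the $T\subseteq S_2$ direction you write ``the equality $\bar\lambda^Tg(\yb+t_kv_k)=0$''; what you actually have is $\lambda_k^Tg(\yb+t_kv_k)=0$, and the comparison with $\mu^Tg(\yb+t_kv_k)\le0$ must go through $\lambda_k$ (the first-order terms then cancel only up to $O(t_k^2)$, which is enough after dividing by $t_k^2$ and passing to the limit $\lambda_k\to\bar\lambda$). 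Second, the step ``$\eta\in N_{\KbG}(v)$'' is where the real work lies: one has to show that the limit of $\nabla g(\yb+t_kv_k)^\ast(\lambda_k-\bar\lambda)/t_k$ lands in the correct face of $N_{\KbG}$, which requires splitting indices according to the sign pattern of $\bar\lambda$ and the activity of $\nabla g_i(\yb)v$, and is not a mere Hoffman estimate. In the $S_1\subseteq T$ direction your relative-interior-then-closure strategy is the right idea, but the construction of $v_k$ with the prescribed active set and of a complementary $\lambda_k\ge0$ with the prescribed first-order increment $\rho$ is precisely the technical core of \cite{GfrYe17a}; you have correctly identified it as the delicate point rather than resolved it.
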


We see that the tangent cone $T_{\hat D}(\hat P(\xb,\yb))$ is the graph of the multifunction $M(v)=M_c(v)+M_p(v)$, where
\[M_p(v):=N_{\KbG}(v)\times T_{\R^p_-}(G(\xb,\yb))\]
is a polyhedral multifunction and
\[ M_c(v):=\{\nabla^2(\lambda^T  g)(\yb)v\mv \lambda\in\Lbv\cap \kappa\norm{\yba} \B_{\R^q}\}\times \{0\}.\]
fulfills \eqref{EqBoundM_C}.

\begin{proposition}\label{PropPolyhedr}Let a critical direction $\bar v\in \KbG$ be given. If there is an open neighborhood $V$ of $\bar v$ and a set $\tilde\Lambda\subset \Lb$ such that
\begin{equation}\label{EqConstLambda}\Lb(v)=\tilde\Lambda\ \forall v\in (\KbG\setminus\{\bar v\})\cap V\end{equation}
then
\begin{equation}\label{EqTanConePoly}T_{\Gr \widehat N_\Gamma}(\yb,\yba)\cap(V\times\R^m)
=\{\big(v,\nabla ^2(\tilde\lambda^T  g)(\yb)v+z^\ast  \big)\mv z^\ast  \in N_{\KbG}(v)\}\cap (V\times\R^m),\end{equation}
where $\tilde\lambda\in\tilde\Lambda$ is an arbitrarily fixed multiplier. In particular, $T_{\Gr \widehat N_\Gamma}(\yb,\yba)$ is locally polyhedral near $(\bar v,\vba)$ for every $\vba$ satisfying $(\bar v,\vba)\in T_{\Gr \widehat N_\Gamma}(\yb,\yba)$ and
\begin{equation}\label{EqRegNormalPoly}\widehat N_{T_{\Gr \widehat N_\Gamma}(\yb,\yba)}(\bar v,\vba)=\big\{(w^\ast  ,w)\mv (w^\ast  +\nabla^2(\tilde\lambda^T  g)(\yb)w, w)\in \big(\K_\KbG(\bar v,\bar z^\ast  )\big)^\ast\times \K_\KbG(\bar v,\bar z^\ast  )\big\},\end{equation}
where $\bar z^\ast  :=\vba-\nabla^2(\tilde\lambda^T  g)(\yb)\bar v$.
\end{proposition}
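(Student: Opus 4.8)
The plan is to read off \eqref{EqTanConePoly} from Theorem~\ref{ThTanConeGrNormalCone} once we know that, under the constancy hypothesis \eqref{EqConstLambda}, the direction-dependent multiplier $\lambda\in\Lbv$ occurring there may be replaced by the single fixed $\tilde\lambda$; the local polyhedrality assertion and the formula \eqref{EqRegNormalPoly} then follow from routine manipulations with polyhedral sets and invertible linear maps. The heart of the argument, and the step I expect to be the main obstacle, is the auxiliary claim that $\nabla^2((\lambda-\tilde\lambda)^Tg)(\yb)v\in\Lsp(N_{\KbG}(v))$ holds for every $v\in\KbG\cap V$ and every $\lambda\in\Lbv$.

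To prove this claim I would perturb $v$ inside $\KbG$. Fix $\lambda,\mu\in\tilde\Lambda$, put $\delta:=\lambda-\mu$, and take an arbitrary $w\in T_{\KbG}(v)\setminus\{0\}$. Since $\KbG$ is polyhedral, $v+sw\in\KbG$ for all small $s\ge0$; for $s$ small enough the perturbed points lie in $V$ and differ from $\bar v$, so $\Lb(v+sw)=\tilde\Lambda$ by \eqref{EqConstLambda}, and $\Lb(v)=\tilde\Lambda$ as well (directly if $v\ne\bar v$; via the scaling identity $\Lb(\bar v)=\Lb(t\bar v)=\tilde\Lambda$, $t\to1$, if $v=\bar v\ne0$; trivially if $\bar v=0$). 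By definition $\Lb(v')$ is the set of maximizers over $\Lb$ of the linear functional $\eta\mapsto\skalp{v',\nabla^2(\eta^Tg)(\yb)v'}$; hence this functional is constant on $\tilde\Lambda$ whenever $\Lb(v')=\tilde\Lambda$, so $\skalp{v',\nabla^2(\delta^Tg)(\yb)v'}=0$ for $v'\in\{v\}\cup\{v+sw\mv 0<s\le\varepsilon\}$. Expanding $0=\skalp{v+sw,\nabla^2(\delta^Tg)(\yb)(v+sw)}=2s\skalp{\nabla^2(\delta^Tg)(\yb)v,w}+s^2\skalp{w,\nabla^2(\delta^Tg)(\yb)w}$ and letting $s\downarrow0$ gives $\skalp{\nabla^2(\delta^Tg)(\yb)v,w}=0$. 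As $w$ ranges over $T_{\KbG}(v)$ this yields $\nabla^2(\delta^Tg)(\yb)v\perp\Span(T_{\KbG}(v))$, and since $N_{\KbG}(v)=(T_{\KbG}(v))^\ast$ and $\Lsp(C^\ast)=(\Span(C))^\perp$ for a closed convex cone $C$, we conclude $\nabla^2(\delta^Tg)(\yb)v\in\Lsp(N_{\KbG}(v))$; the choice $\mu=\tilde\lambda$ proves the claim. The only delicate point is the bookkeeping ensuring that the perturbations $v+sw$ stay in $(\KbG\setminus\{\bar v\})\cap V$, together with the reduction $\Lb(\bar v)=\tilde\Lambda$ when $v=\bar v\ne0$.

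Granting the claim, \eqref{EqTanConePoly} follows quickly. For the inclusion ``$\supseteq$'': if $v\in\KbG\cap V$ and $z^\ast\in N_{\KbG}(v)$, then $\tilde\lambda\in\tilde\Lambda=\Lbv$ when $v\ne\bar v$ (and $\tilde\lambda\in\Lb(\bar v)$ when $v=\bar v$, as above), so Theorem~\ref{ThTanConeGrNormalCone} places $(v,\nabla^2(\tilde\lambda^Tg)(\yb)v+z^\ast)$ in $T_{\Gr\widehat N_\Gamma}(\yb,\yba)$. For ``$\subseteq$'': if $(v,v^\ast)\in T_{\Gr\widehat N_\Gamma}(\yb,\yba)\cap(V\times\R^m)$, then by Theorem~\ref{ThTanConeGrNormalCone} there are $\lambda\in\Lbv$ and $z^\ast\in N_{\KbG}(v)$ with $v^\ast=\nabla^2(\lambda^Tg)(\yb)v+z^\ast$; writing $\nabla^2(\lambda^Tg)(\yb)v=\nabla^2(\tilde\lambda^Tg)(\yb)v+\nabla^2((\lambda-\tilde\lambda)^Tg)(\yb)v$, the claim together with $\Lsp(N_{\KbG}(v))+N_{\KbG}(v)=N_{\KbG}(v)$ shows $v^\ast\in\nabla^2(\tilde\lambda^Tg)(\yb)v+N_{\KbG}(v)$, as required.

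Finally, the right-hand side of \eqref{EqTanConePoly}, before intersecting with $V\times\R^m$, is the image of the polyhedral set $\Gr N_{\KbG}$ under the invertible linear map $L(v,z^\ast):=(v,\nabla^2(\tilde\lambda^Tg)(\yb)v+z^\ast)$, hence is polyhedral by \cite[Theorem~19.3]{Ro70}; thus $T_{\Gr\widehat N_\Gamma}(\yb,\yba)$ agrees with a polyhedral set on the neighborhood $V\times\R^m$ of $(\bar v,\vba)$, which is precisely local polyhedrality near $(\bar v,\vba)$. Since the regular normal cone is a local object and $L$ is a linear isomorphism with $L(\bar v,\bar z^\ast)=(\bar v,\vba)$, one gets $\widehat N_{T_{\Gr\widehat N_\Gamma}(\yb,\yba)}(\bar v,\vba)=(L^\ast)^{-1}\big(\widehat N_{\Gr N_{\KbG}}(\bar v,\bar z^\ast)\big)$, where $L^\ast(w^\ast,w)=(w^\ast+\nabla^2(\tilde\lambda^Tg)(\yb)w,w)$. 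By the reduction lemma for the normal-cone map of a polyhedral cone, $T_{\Gr N_{\KbG}}(\bar v,\bar z^\ast)=\Gr N_{\K_\KbG(\bar v,\bar z^\ast)}$, and for any convex cone $\Q$ the polar of $\Gr N_\Q$ equals $\Q^\ast\times\Q$ (one inclusion is a direct sign check; the reverse follows by testing against $(v,0)$ with $v\in\Q$ and against $(0,w)$ with $w\in N_\Q(0)=\Q^\ast$). Substituting these into the previous identity gives precisely \eqref{EqRegNormalPoly}.
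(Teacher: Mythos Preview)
Your proposal is correct and follows essentially the same route as the paper. The auxiliary claim, its perturbation proof, the derivation of \eqref{EqTanConePoly}, and the polyhedrality argument all match the paper's proof; the only cosmetic difference is in the last step, where the paper verifies \eqref{EqRegNormalPoly} by a direct limsup computation and then invokes the identity $\widehat N_{\Gr N_{\KbG}}(\bar v,\bar z^\ast)=\big(\K_{\KbG}(\bar v,\bar z^\ast)\big)^\ast\times\K_{\KbG}(\bar v,\bar z^\ast)$ from \cite{DoRo96}, whereas you package the same change of variables via the linear isomorphism $L$ and rederive that identity through the reduction lemma and the polar formula for $\Gr N_\Q$. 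One wording nit: in the case $v=\bar v=0$ you do not actually get $\Lb(0)=\tilde\Lambda$ ``trivially'' (in general $\Lb(0)=\Lb\supsetneq\tilde\Lambda$); what is trivial there is the claim itself, since $\nabla^2((\lambda-\tilde\lambda)^Tg)(\yb)\cdot 0=0$.
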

\begin{proof}Let $\tilde \lambda\in\tilde\Lambda$ be arbitrarily fixed. We claim that for every $v\in(\KbG\setminus\{\bar v\})\cap V$ we have
\begin{equation}\label{EqAuxClaim1}\big\{\nabla^2(\lambda^T  g)(y)v\mv  \lambda\in\Lb(v)\big\}+ N_{\KbG}(v)=\nabla^2(\tilde\lambda^T  g)(y)v+ N_{\KbG}(v)\end{equation}
Indeed, consider $v^\ast  =\nabla^2(\lambda^T  g)(y)v+z^\ast  $ with $\lambda\in\Lb(v)$ and $z^\ast  \in N_{\KbG}(v)$. Since $\KbG$ is a convex polyhedral set, for every $w\in T_{\KbG}(v)$ we have
$v+\alpha w\in (\KbG\setminus\{\bar v\})\cap V$ for all $\alpha\geq 0$ sufficiently small and therefore $(v+\alpha w)^T  \nabla^2(\lambda^T  g)(\yb)(v+\alpha w)=(v+\alpha w)^T \nabla^2(\tilde \lambda^T  g)(\yb)(v+\alpha w)$. Because we also have $v^T  \nabla^2(\lambda^T  g)(\yb)v =v^T  \nabla^2(\tilde \lambda^T  g)(\yb)v$ we conclude $v^T  \nabla^2\big((\lambda-\tilde\lambda)^T  g\big)(\yb)w=0$ $\forall w\in T_{\KbG}(v)$ and consequently $\nabla^2\big((\lambda-\tilde\lambda)^T  g\big)(\yb)v\in \Lsp(N_{\KbG}(v))$. Thus
\begin{eqnarray*}v^\ast  &=&\nabla^2(\tilde \lambda^T  g)(y)v+\nabla^2\big((\lambda-\tilde\lambda)^T  g\big)(\yb)v+z^\ast  \\
&\in& \nabla^2(\tilde \lambda^T  g)(y)v+\Lsp(N_{\KbG}(v))+N_{\KbG}(v)= \nabla^2(\tilde \lambda^T  g)(y)v+N_{\KbG}(v)\end{eqnarray*}
and
\[\big\{\nabla^2(\lambda^T  g)(y)v\mv  \lambda\in\Lb(v)\big\}+ N_{\KbG}(v)\subset\nabla^2(\tilde\lambda^T  g)(y)v+ N_{\KbG}(v)\]
follows. Since the reverse inclusion obviously holds, our claim \eqref{EqAuxClaim1} is verified. We next show that \eqref{EqAuxClaim1} holds for $v=\bar v$ as well. If $\bar v=0$ then \eqref{EqAuxClaim1} obviously holds for $v=\bar v$. On the other hand, if $\bar v\not=0$, we can find some $\alpha\not=1$ sufficiently close to $1$ such that  $\alpha\bar v\in
(\KbG\setminus\{\bar v\})\cap V$, implying
\begin{eqnarray*}\lefteqn{\alpha\Big(\big\{\nabla^2(\lambda^T  g)(y)\bar v\mv  \lambda\in\Lb(\bar v)\big\}+ N_{\KbG}(\bar v)\Big)=
\big\{\nabla^2(\lambda^T  g)(y)\alpha\bar v\mv  \lambda\in\Lb(\alpha \bar v)\big\}+ N_{\KbG}(\alpha\bar v)}\\
&=& \nabla^2(\tilde\lambda^T  g)(y)\alpha \bar v+ N_{\KbG}(\alpha \bar v)=\alpha\Big(\nabla^2(\tilde\lambda^T  g)(y)\bar v+ N_{\KbG}(\bar v)\Big),\hspace{3.5cm}\end{eqnarray*}
where we have used the relations $\Lb(\alpha \bar v)=\Lb(\bar v)$ and $N_{\KbG}(\bar v)=N_{\KbG}(\alpha\bar v)=\alpha N_{\KbG}(\bar v)$. Thus \eqref{EqAuxClaim1} holds for all $v\in \KbG\cap V$ and the representation \eqref{EqTanConePoly} follows from \eqref{EqTanConeGrNormalCone}. Since the graph of the normal cone mapping to a convex polyhedral set is a polyhedral set \cite{Rob79}, $\Gr N_\KbG$ is the union of polyhedral convex sets $C_1,\ldots, C_l\subset\R^m\times\R^m$. By taking into account \cite[Theorem 19.3]{Ro70} we obtain that $\{\big(v,\nabla ^2(\tilde\lambda^T  g)(\yb)v+z^\ast  \big)\mv z^\ast  \in N_{\KbG}(v)\}$ is the union of the polyhedral convex sets $\{\big(v,\nabla ^2(\tilde\lambda^T  g)(\yb)v+z^\ast  \big)\mv (v,z^\ast  )\in C_i\}$, $i=1,\ldots,l$.  Now it follows from \eqref{EqTanConePoly} that  $T_{\Gr \widehat N_\Gamma}(\yb,\yba)$ is locally polyhedral near $(\bar v,\vba)$ for every $\vba$ satisfying $(\bar v,\vba)\in T_{\Gr \widehat N_\Gamma}(\yb,\yba)$.

  By virtue of \eqref{EqTanConePoly}, for every pair $(v,v^\ast  )\in T_{\Gr \widehat N_\Gamma}(\yb,\yba)$ close to $(\bar v,\vba)$ there is a unique element $z^\ast  \in N_{\KbG}(v)$ with
  $v^\ast  =\nabla^2(\tilde\lambda^T  g)(\yb)v+z^\ast  $. Thus
  \begin{eqnarray*}\lefteqn{(w^\ast  ,w)\in \widehat N_{T_{\Gr \widehat N_\Gamma}(\yb,\yba)}(\bar v,v^\ast  )\Longleftrightarrow \limsup_{(v,v^\ast  )\longsetto{{T_{\Gr \widehat N_\Gamma}(\yb,\yba)}}(\bar v,\vba)}\frac{\skalp{w^\ast  ,v-\bar v}+\skalp{w,v^\ast  -\vba}}{\norm{(v,v^\ast  )-(\bar v,\vba)}}\leq 0}\\
  &\Longleftrightarrow& \limsup_{(v,z^\ast  )\longsetto{{\Gr N_\KbG}}(\vb,\bar z^\ast  )}\frac{\skalp{w^\ast  ,v-\vb}+\skalp{w,\nabla^2(\tilde\lambda^T  g)(\yb)v+z^\ast  -\nabla^2(\tilde\lambda^T  g)(\yb)\vb-\bar z^\ast  }}{\norm{(v,\nabla^2(\tilde\lambda^T  g)(\yb)v+z^\ast  )-(\vb,\nabla^2(\tilde\lambda^T  g)(\yb)\vb+\bar z^\ast  )}}\leq 0\\
  &\Longleftrightarrow& \limsup_{(v,z^\ast  )\longsetto{{\Gr N_\KbG}}(\vb,\bar z^\ast  )}\frac{\skalp{w^\ast  +\nabla^2(\tilde\lambda^T  g)(\yb)w,v-\vb}+\skalp{w,z^\ast  -\bar z^\ast  }}{\norm{(v,z^\ast  )-(\vb,\bar z^\ast  )}}\leq 0\\
  &\Longleftrightarrow&(w^\ast  +\nabla^2(\tilde\lambda^T  g)(\yb)w, w)\in \widehat N_{\Gr N_\KbG}(\vb,\bar z^\ast  )
  \end{eqnarray*}
  and \eqref{EqRegNormalPoly} follows from the identity $\widehat N_{\Gr N_\KbG}(\vb,\bar z^\ast  )= \big(\K_\KbG(\bar v,\bar z^\ast  )\big)^\ast\times \K_\KbG(\bar v,\bar z^\ast  )$, cf. \cite[Equation (13)]{DoRo96}.
\end{proof}
We are now in the position to state the main result of this section.
\begin{theorem}\label{ThMPECNecCond}
Assume that $(\xb,\yb)$ is B-stationary for the program (MPEC'), assume that Assumption \ref{Ass1} is fulfilled and that  there is a set $\tilde\Lambda\subset \Lb$ such that
\begin{equation}\label{EqConstLambda1}\Lb(v)=\tilde\Lambda\ \forall v\in \KbG\setminus\{0\}.\end{equation}
Then for every $\tilde\lambda\in\tilde\Lambda$ there are $v\in \KbG$, $z^\ast  \in N_\KbG(v)$ and multipliers $w\in \K_\KbG( v,\bar z^\ast  )$, $\mu\in N_{\R^p_-}(G(\xb,\yb))$ such that
\begin{align*}&0=\nabla_x F(\xb,\yb)  -\nabla_x\phi(\xb,\yb)^\ast  w+\nabla_x G(\xb,\yb)^\ast  \mu\\
&0\in \nabla_y F(\xb,\yb)  -\nabla^2(\tilde \lambda^T  g)(\yb) w -\nabla_y\phi(\xb,\yb)^\ast  w+\nabla_y G(\xb,\yb)^\ast  \mu +\big(\K_\KbG(v,\bar z^\ast  )\big)^\ast.
\end{align*}
\end{theorem}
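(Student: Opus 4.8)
The plan is to apply Proposition~\ref{PropStrongFirstOrder} to (MPEC') with $f:=F$, $P:=\hat P$, $D:=\hat D$, to rule out its second alternative by means of \eqref{EqConstLambda1}, and finally to transcribe the resulting relation $\nabla F(\xb,\yb)+\nabla\hat P(\xb,\yb)^\ast w^\ast=0$ into the data of (MPEC'). First I would note that the assumptions of Proposition~\ref{PropStrongFirstOrder} are in force: $(\xb,\yb)$ is B‑stationary by hypothesis, and GGCQ at $(\xb,\yb)$ together with metric subregularity of $(u,v)\rightrightarrows\nabla\hat P(\xb,\yb)(u,v)-T_{\hat D}(\hat P(\xb,\yb))$ at $((0,0),0)$ is precisely Assumption~\ref{Ass1}(2). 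The decisive point is that, under \eqref{EqConstLambda1}, the cone $T_{\hat D}(\hat P(\xb,\yb))$ is locally polyhedral near \emph{every} one of its points. Indeed, \eqref{EqConstLambda1} makes hypothesis \eqref{EqConstLambda} of Proposition~\ref{PropPolyhedr} hold for every $\bar v\in\KbG$ (choose $V$ a ball around $\bar v$, taken small enough to avoid $0$ when $\bar v\neq0$ and arbitrary when $\bar v=0$, since \eqref{EqConstLambda1} is assumed on all of $\KbG\setminus\{0\}$); hence, by Proposition~\ref{PropPolyhedr}, $T_{\Gr\widehat N_\Gamma}(\yb,\yba)$ is locally polyhedral near $(\bar v,\vba)$ for every $(\bar v,\vba)\in T_{\Gr\widehat N_\Gamma}(\yb,\yba)$. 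Since $T_{\hat D}(\hat P(\xb,\yb))=T_{\Gr\widehat N_\Gamma}(\yb,\yba)\times T_{\R^p_-}(G(\xb,\yb))$ by \cite[Proposition 1]{GfrYe17a} and the second factor is a polyhedral cone, the product is locally polyhedral near each of its points (via a product neighbourhood). Therefore the second alternative of Proposition~\ref{PropStrongFirstOrder}, which would require $T_{\hat D}(\hat P(\xb,\yb))$ \emph{not} to be locally polyhedral near $\nabla\hat P(\xb,\yb)\ub$, cannot occur, and its first alternative yields $w\in T_{\hat D}(\hat P(\xb,\yb))$ and $w^\ast\in\widehat N_{T_{\hat D}(\hat P(\xb,\yb))}(w)$ with $\nabla F(\xb,\yb)+\nabla\hat P(\xb,\yb)^\ast w^\ast=0$ — equivalently, the conclusion of Theorem~\ref{ThStrongFirstOrder} holds with $k=0$.

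Next I would unpack this relation. Write $w=(v,v^\ast,w_0)$ with $(v,v^\ast)\in T_{\Gr\widehat N_\Gamma}(\yb,\yba)$ and $w_0\in T_{\R^p_-}(G(\xb,\yb))$; applying Proposition~\ref{PropPolyhedr} with $\bar v:=v$, formula \eqref{EqTanConePoly} furnishes $z^\ast\in N_\KbG(v)$ with $v^\ast=\nabla^2(\tilde\lambda^T g)(\yb)v+z^\ast$, and we set $\bar z^\ast:=z^\ast$. Splitting $w^\ast=(w^1,w,\mu)$ accordingly, the component $\mu\in\widehat N_{T_{\R^p_-}(G(\xb,\yb))}(w_0)$ lies in $N_{\R^p_-}(G(\xb,\yb))$ by the lemma preceding Corollary~\ref{CorSharpMin}, while applying \eqref{EqRegNormalPoly} to $(w^1,w)\in\widehat N_{T_{\Gr\widehat N_\Gamma}(\yb,\yba)}(v,v^\ast)$ gives $w\in\K_\KbG(v,\bar z^\ast)$ and $w^1\in\big(\K_\KbG(v,\bar z^\ast)\big)^\ast-\nabla^2(\tilde\lambda^T g)(\yb)w$. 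It then remains to write out the adjoint
\[\nabla\hat P(\xb,\yb)^\ast(w^1,w,\mu)=\big(-\nabla_x\phi(\xb,\yb)^\ast w+\nabla_x G(\xb,\yb)^\ast\mu,\ w^1-\nabla_y\phi(\xb,\yb)^\ast w+\nabla_y G(\xb,\yb)^\ast\mu\big),\]
insert it into $\nabla F(\xb,\yb)+\nabla\hat P(\xb,\yb)^\ast w^\ast=0$, separate the $x$‑ and $y$‑components, and replace $w^1$ in the $y$‑component using the inclusion above; this produces exactly the two displayed relations.

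The main obstacle is the first step, namely the verification that \eqref{EqConstLambda1} forces $T_{\hat D}(\hat P(\xb,\yb))$ to be locally polyhedral near \emph{all} of its points; this is what collapses the recursion of Theorem~\ref{ThStrongFirstOrder} to $k=0$ and, consequently, purges the optimality conditions of the critical directions that would otherwise appear in the second alternative. After that, the argument is only the routine splitting of the regular normal cone of a Cartesian product together with the bookkeeping for $\nabla\hat P(\xb,\yb)^\ast$.
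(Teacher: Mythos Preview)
Your argument is correct and follows essentially the same route as the paper: use \eqref{EqConstLambda1} together with Proposition~\ref{PropPolyhedr} to see that $T_{\hat D}(\hat P(\xb,\yb))$ is locally polyhedral near each of its points, so that only the first alternative of Proposition~\ref{PropStrongFirstOrder} can occur, and then read off the optimality conditions via \eqref{EqRegNormalPoly} and the product splitting of $\widehat N_{T_{\hat D}}$. One small remark: the lemma you invoke for $\mu\in N_{\R^p_-}(G(\xb,\yb))$ is not Lemma~\ref{LemSharpMin} (which is the one literally preceding Corollary~\ref{CorSharpMin}) but the unlabeled inclusion lemma right after Theorem~\ref{ThStrongFirstOrder}; the paper itself simply appeals to the well-known identity $N_{\R^p_-}(G(\xb,\yb))=\bigcup_{t\in T_{\R^p_-}(G(\xb,\yb))} N_{T_{\R^p_-}(G(\xb,\yb))}(t)$ for polyhedral cones.
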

\begin{proof}By \eqref{EqConstLambda1} and Proposition \ref{PropPolyhedr} we obtain that $T_{\Gr \widehat N_\Gamma}(\yb,\yba)$ is locally polyhedral near every $(v,v^\ast)\in T_{\Gr \widehat N_\Gamma}(\yb,\yba)$. Since $T_{\R^p_-}(G(\xb,\yb))$ is  a convex polyhedral set, $T_{\hat D}(\yb,\yba,G(\xb,\yb))$ is polyhedral near every direction $(v,v^\ast,t)\in T_{\hat D}(\yb,\yba,G(\xb,\yb))$. Hence, by Proposition \ref{PropStrongFirstOrder} there exists a direction $(v,v^\ast,t)\in T_{\hat D}(\yb,\yba,G(\xb,\yb))$ and a regular normal
$(w^\ast,w,\mu)\in \widehat N_{T_{\hat D}(\yb,\yba,G(\xb,\yb))}(v,v^\ast,t)= \widehat N_{T_{\Gr \widehat N_\Gamma}(\yb,\yba)}(v,v^\ast)\times  N_{T_{\R^p_-}(G(\xb,\yb))}(t)$
such that
\[0=\nabla F(\xb,\yb)+\nabla\hat P(\xb,\yb)^\ast\left(\begin{array}
  {c} w^\ast\\w\\\mu
\end{array}\right)
=\left(\begin{array}{c}
  \nabla_x F(\xb,\yb)-\nabla_x\phi(\xb,\yb)^\ast  w+\nabla_x G(\xb,\yb)^\ast  \mu\\ \nabla_y F(\xb,\yb)  +w^\ast -\nabla_y\phi(\xb,\yb)^\ast  w+\nabla_y G(\xb,\yb)^\ast  \mu
\end{array}\right)\]
By utilizing \eqref{EqRegNormalPoly} and the well-known identity $N_{\R^p_-}(G(\xb,\yb))=\bigcup_{t\in T_{\R^p_-}(G(\xb,\yb))} N_{T_{\R^p_-}(G(\xb,\yb))}(t)$ the assertion follows.
\end{proof}
Recall that the inequalities $g(y)\leq 0$ satisfy the {\em constant rank constraint qualification} (CRCQ) at a feasible point $\yb$ if for each subset $I\subseteq\{i\in\{1,\ldots, q\}\mv g_i(\yb)=0\}$ there is a neighborhood $V$ of $\bar y$ such that the rank of $\{\nabla g_i(y)\mv i\in I\}$ is a constant value on $V$. It was shown in \cite[Proposition 5.3]{GfrMo15a} that CRCQ at $\yb$ is a sufficient condition for \eqref{EqConstLambda1} to hold. By applying \cite[Proposition 5.3]{GfrMo15a} to the system $\tilde g(y)\leq 0$,
where
\[\tilde g_i(y)=g_i(\yb)+\skalp{\nabla g_i(\yb),y-\yb}+\frac 12 (y-\yb)^\ast  \nabla ^2g_i(\yb)(y-\yb),\ i=1,\ldots,q\]
it follows that it is sufficient to require CRCQ for  the system $\tilde g(y)\leq 0$ in order to guarantee \eqref{EqConstLambda1}.
However, it is easy to find examples where the condition \eqref{EqConstLambda1} is fulfilled but CRCQ neither for the system $g(y)\leq 0$ nor the system $\tilde g(y)\leq 0$ holds.

The following example  demonstrates the benefit of the necessary optimality conditions of Theorem \ref{ThMPECNecCond}
\begin{example}Consider the problem
\[\min_{x\in\R,y\in\R^3} x-2y_3\quad\mbox{subject to}\quad 0\in (y_1,y_2,-x+y_3)+\widehat N_\Gamma(y)\]
with \[\Gamma:=\left\{y\in\R^3\mv \begin{array}{l}g_1(y):=y_3-y_1^3\leq 0\\g_2(y):=y_3-a^3y_2^3\leq 0
\end{array}\right\},\]
where $a>0$ denotes a fixed parameter. Then $\xb=0$, $\yb=(0,0,0)$ is a local solution. Obviously MFCQ is fulfilled at $\yb$ and straightforward calculations yield $\yba=(0,0,0)$, $\KbG=\R\times\R\times\R_-$ and
\[\Lb=\Lb(v)=\{(0,0)\}\ \forall v\in \KbG.\]
Thus condition \eqref{EqConstLambda1} is fulfilled and the first-order optimality condition of Theorem \ref{ThMPECNecCond} must hold.
Indeed, taking $\tilde\lambda=(0,0)$, $v=z^\ast=(0,0,0)$ we have $\K_{\KbG}(v,z^\ast)=\KbG$, $\big(\K_{\KbG}(v,z^\ast)\big)^\ast=\{0\}\times\{0\}\times \R_+$ and with $w=(0,0,-1)$ we obtain
\begin{align*}&\nabla_x F(\xb,\yb)  -\nabla_x\phi(\xb,\yb)^\ast  w =1 - (0\ 0\ -1)\left(\begin{array}{c}0\\0\\-1\end{array}\right)=0,\\
&-(\nabla_y F(\xb,\yb)  -\nabla^2(\tilde \lambda^T  g)(\yb) w -\nabla_y\phi(\xb,\yb)^\ast  w)\\
&=-\left(\left(\begin{array}{c}0\\0\\-2\end{array}\right)-\left(\begin{array}{c}0\\0\\0\end{array}\right)-\left(\begin{array}{ccc}1&0&0\\0&1&0\\0&0&1\end{array}\right)\left(\begin{array}{c}0\\0\\-1\end{array}\right)\right)
=\left(\begin{array}{c}0\\0\\1\end{array}\right)\in\big(\K_\KbG(v,\bar z^\ast  )\big)^\ast
\end{align*}
verifying the first-order optimality conditions of Theorem \ref{ThMPECNecCond}.

In \cite[Example 1]{GfrOut16a} the limiting normal cone $N_{\Gr \Gamma}(\yb,\yba)$ was computed explicitly. It appears that it depends on the parameter $a$ and thus a point-based representation of the limiting normal cone in terms of first-order and second-order derivatives  of $g$ is not possible. This shows the difficulty of verifying the M-stationarity conditions at the solution.
\end{example}

So far we have only considered linearized M-stationarity  conditions for the MPEC \eqref{EqMPEC'} under the assumption \eqref{EqConstLambda1}, which allows the application of Theorem \ref{ThStrongFirstOrder} with $k=0$. In a forthcoming paper we will formulate the linearized M-stationarity  conditions for this problem for the general case. Anticipating the main result of that paper we will show with the help of Proposition \ref{PropStrongFirstOrderGraph} that  Theorem \ref{ThStrongFirstOrder} holds with $k=1$.

\section{\label{SecConcl} Concluding remarks and future research}

In this paper we considered new first-order optimality conditions for general optimization problems which are stronger than the commonly used M-stationarity conditions. The key idea is to apply the M-stationarity conditions not to the original problem but to the linearized problem and to repeat this procedure. As a final result we obtain that the multiplier is not only a limiting normal but also a regular normal to  tangent cone to a series of tangent cones. Because the optimality conditions are based on a repeated linearization process we use the term {\em linearized M-stationarity conditions}.

The applicability of the new optimality conditions are demonstrated at the basis of a special MPEC, where the equilibrium is modeled via a general equation involving the normal cone to a set given by $C^2$-inequalities. Under a certain additional condition we explicitly stated the optimality conditions in terms of the problem data at the reference point. This additional assumption ensures that the linearization process has not to be repeated. We presented an example where the M-stationarity conditions  cannot be stated effectively by the difficulty of computing the limiting normal cone, whereas our results fully apply. We plan to drop this additional assumption in a forthcoming paper to obtain the linearized M-stationarity condition for this MPEC in the general case.

A further goal is the application of the developed theory to other problem classes, e.g. to MPECs involving the normal cone to sets appearing in second-order cone programming and semidefinite programming. In particular in the latter case we expect that the linearization process has to be eventually repeated more than once.

Another direction of future research could be the investigation of the sufficiency of the linearized M-stationarity conditions for B-stationarity. Similar as in \cite{Ye05} one could look for properties of the problem functions which ensure that the reference point is a globally or locally optimal solution. Another approach could be the fulfilment of some linearized M-stationary condition in every nonzero critical direction similar to the concept of {\em extended M-stationarity} used in \cite{Gfr14a}.

{\bf Acknowledgements.} The research was partially supported by the Austrian Science Fund (FWF) under grant P29190-N32.

\end{document}